%
%

\documentclass[11pt,letterpaper,reqno]{amsart}
\usepackage{mathtools}
\usepackage{amsthm}
\usepackage[T1]{fontenc}
\usepackage{enumitem} 
\usepackage{hyperref}\hypersetup{colorlinks=true, citecolor=blue}
\usepackage{color,todonotes}
\usepackage[mathscr]{euscript}
\usepackage{comment} 
\usepackage{amssymb}
\usepackage{amsfonts}
\usepackage{esint,amsfonts,amsmath,amssymb,epsfig,mathrsfs,bm,accents,yfonts,dsfont,stmaryrd}
\usepackage{graphicx,color}
\hyphenation{Lip-schitz sub-space}
\usepackage{tikz,fp,ifthen,fullpage,bbm}
\usetikzlibrary{backgrounds}
\usetikzlibrary{decorations.pathmorphing,backgrounds,fit,calc,through}
\usetikzlibrary{arrows}
\usetikzlibrary{shapes,decorations,shadows}
\usetikzlibrary{fadings}
\usetikzlibrary{patterns}
\usetikzlibrary{mindmap}
\usetikzlibrary{decorations.text}
\usetikzlibrary{decorations.shapes}

%
%
\setlength{\footskip}{30pt}

\textwidth = 16 cm
%


\tolerance 1000

%
%
\theoremstyle{plain} 
\newtheorem{theorem}{Theorem}[section]
\newtheorem{lem}[theorem]{Lemma}
\newtheorem{prop}[theorem]{Proposition}
\newtheorem{thm}[theorem]{Theorem}
\newtheorem{cor}[theorem]{Corollary}

\newtheorem{remark}[theorem]{Remark}

\newtheorem{notazioni}[theorem]{Notation}
\newtheorem{defn}[theorem]{Definition}
\numberwithin{equation}{section}

\newtheoremstyle{mytheorem}
{}
{}
{\it}
{\parindent}
{\bf}
{.}
{ }
{\thmnumber{#2.~}\thmname{#1}\thmnote{~\rm#3}}

\newtheoremstyle{myremark}
{}
{}
{\rm}
{\parindent}
{\bf}
{.}
{ }
{\thmnumber{#2.~}\thmname{#1}\thmnote{~\rm#3}}

\newtheoremstyle{myparagraph}
{}
{}
{\rm}
{\parindent}
{\bf}
{.}
{ }
{\thmnumber{#2.~}\thmname{#1}\thmnote{#3}}

%
%

%
%

\makeatletter
\def\@secnumfont{\sc}
\def\section{\@startsection{section}{1}%
\z@{1.5\linespacing\@plus .2\linespacing}{.7\linespacing}%
{\normalfont\sc\centering}}
\makeatother

\makeatletter
\def\ps@headings{\ps@empty
 \def\@evenhead{%
  \setTrue{runhead}%
  \normalfont\footnotesize
  \rlap{\thepage}\hfil
  \def\thanks{\protect\thanks@warning}%
  \leftmark{}{}\hfil}%
 \def\@oddhead{%
  \setTrue{runhead}%
  \normalfont\footnotesize\hfil
  \def\thanks{\protect\thanks@warning}%
  \rightmark{}{}\hfil \llap{\thepage}}%
\let\@mkboth\markboth}
\makeatother
	%

\makeatletter
\renewenvironment{proof}[1][\proofname]{\par
  \pushQED{\qed}%
  \normalfont \topsep6\p@\@plus6\p@\relax
  \trivlist
  \itemindent\normalparindent
  \item[\hskip\labelsep
    \bfseries
    #1\@addpunct{.}]\ignorespaces
}{%
  \popQED\endtrivlist\@endpefalse
}
\providecommand{\proofname}{Proof}
\makeatother
	%

%
%

\newcommand{\Mass}{\mathbb{M}}

\newcommand{\R}{\mathbb{R}}
\newcommand{\Q}{\mathbb{Q}}
\newcommand{\N}{\mathbb{N}}

\newcommand{\F}{\mathscr{F}}
\newcommand{\G}{\mathrm{G}}
\newcommand{\Haus}{\mathcal{H}}

\newcommand{\M}{\mathbb{M}}

\newcommand{\Lip}{\mathrm{Lip}}

\newcommand{\Tan}{\mathrm{Tan}}

\newcommand{\dist}{\mathrm{dist}}

\newcommand{\dV}{d_V\kern-1pt}




\newcommand{\cost}{{\mathcal C}}
\newcommand{\en}{\mathbb E}


\DeclareMathOperator{\sign}{sign}
\DeclareMathOperator{\rec}{rec}
\DeclareMathOperator{\diff}{diff}
\DeclareMathOperator{\spn}{span}

\DeclareMathOperator{\spt}{spt}
\newcommand{\trait}[3]{\vrule width #1ex height #2ex depth #3ex}

\newcommand{\trace}{\mathchoice%
  {\mathbin{\trait{.12}{1.2}{.03}\trait{.8}{0.09}{0.03}}}
  {\mathbin{\trait{.12}{1.2}{.03}\trait{.8}{0.09}{0.03}}}
  {\mathbin{\hskip.15ex\trait{.09}{.84}{0.02}\trait{.56}{.07}{.02}}\hskip.15ex}
  {\mathbin{\trait{.07}{.6}{.01}\trait{.4}{.06}{.01}}}}

\makeatletter
\@addtoreset{footnote}{section}
\makeatother

	
	\newcommand{\wto}{\stackrel{*}{\rightharpoonup}}

%
%


\newcommand{\Na}{\mathbb{N}} 
\newcommand{\Sf}{\mathbb{S}} 




\newcommand{\p}{\mathbf{p}} 




\newcommand{\mass}{\mathbb{M}} 
\newcommand{\Po}{\mathscr{P}} 
\newcommand{\Rc}{\mathscr{R}} 
\renewcommand{\flat}{\mathbb{F}} 

 




\newcommand{\Ha}{\mathcal{H}} 


\newcommand{\e}{\varepsilon} 
\newcommand{\diam}{\mathrm{diam}} 


\def\XXint#1#2#3{{\setbox0=\hbox{$#1{#2#3}{\int}$ }
\vcenter{\hbox{$#2#3$ }}\kern-.6\wd0}}

\newcommand{\mres}{\mathbin{\vrule height 1.6ex depth 0pt width 
0.13ex\vrule height 0.13ex depth 0pt width 1.3ex}}

\DeclareFontFamily{U}{matha}{\hyphenchar\font45}
\DeclareFontShape{U}{matha}{m}{n}{
  <-6> matha5 <6-7> matha6 <7-8> matha7
  <8-9> matha8 <9-10> matha9
  <10-12> matha10 <12-> matha12
  }{}

\DeclareSymbolFont{matha}{U}{matha}{m}{n}
\DeclareMathSymbol{\Lt}{3}{matha}{"CE}




\newcommand{\abs}[1]{\lvert#1\rvert} 





\newcommand{\E}{\mathbb{E}}


%
%


\title{A multi-material transport problem\\ with arbitrary marginals}

\author[Marchese]{A. Marchese}
\address[Andrea Marchese]{
\newline \indent Dipartimento di Matematica, Universit\`a degli Studi di Trento,
\newline \indent Via Sommarive 14, 38123 Povo, Italy}
\email{andrea.marchese@unitn.it}

\author[Massaccesi]{A. Massaccesi}
\address[Annalisa Massaccesi]{
\newline \indent Dipartimento di Tecnica e Gestione dei Sistemi Industriali, Università di Padova,
\newline \indent Stradella San Nicola, 3 - Vicenza, Italy}
\email{annalisa.massaccesi@unipd.it}

\author[Stuvard]{S. Stuvard}
\address[Salvatore Stuvard]{
\newline \indent Department of Mathematics, University of Texas at Austin,
\newline \indent 2515 Speedway STOP C1200, 78712 Austin, Texas, USA}
\email{stuvard@math.utexas.edu}

\author[Tione]{R. Tione}
\address[Riccardo Tione]{
\newline \indent EPFL B, Station 8,
\newline \indent CH-1015 Lausanne, Switzerland}
\email{riccardo.tione@epfl.ch}

\begin{document}
\begin{abstract}
In this paper we study general transportation problems in $\mathbb{R}^n$, in which $m$ different goods are moved simultaneously. The initial and final positions of the goods are prescribed by measures $\mu^-$, $\mu^+$ on $\mathbb{R}^n$ with values in $\mathbb{R}^m$. When the measures are finite atomic, a discrete transportation network is a measure $T$ on $\mathbb{R}^n$ with values in $\mathbb{R}^{n\times m}$ represented by an oriented graph $\mathcal{G}$ in $\mathbb{R}^n$ whose edges carry multiplicities in $\mathbb{R}^m$. The constraint is encoded in the relation ${\rm div}(T)=\mu^--\mu^+$. The cost of the discrete transportation $T$ is obtained integrating on $\mathcal{G}$ a general function $\mathcal{C}:\mathbb{R}^m\to\mathbb{R}$ of the multiplicity. When the initial data $\left(\mu^-,\mu^+\right)$ are arbitrary (possibly diffuse) measures, the cost of a transportation network between them is computed by relaxation of the functional on graphs mentioned above. Our main result establishes the existence of cost-minimizing transportation networks for arbitrary data $\left(\mu^-,\mu^+\right)$. Furthermore, under additional assumptions on the cost integrand $\mathcal{C}$, we prove the existence of transportation networks with finite cost and the stability of the minimizers with respect to variations of the given data. Finally, we provide an explicit integral representation formula for the cost of rectifiable transportation networks, and we characterize the costs such that every transportation network with finite cost is rectifiable.
\end{abstract}

\maketitle

\medskip\noindent
{\sc Keywords: } Transportation networks, Branched transportation, Multi-material transport problem, Normal currents, Currents with coefficients in groups.

\par
\medskip\noindent
{\sc MSC :} 49Q10, 49Q15, 49Q20.
\par

\tableofcontents
\section{Introduction}
In several transportation problems one may be interested in the minimization of a cost functional which privileges the aggregation of mass particles during the transportation and prevents diffusion. This automatically produces optimal transportation networks with branched structures. The branching behavior of optimal transportation systems emerges in many natural phenomena, such as the structure of the nerves of a leaf and the roots of a tree, of river basins and of the bronchial, the cardiovascular, and the nervous system, as well as in several human-designed supply-demand systems, like water and energy distribution or urban planning.\\

The most popular \emph{Eulerian formulation} of branched transportation was proposed by Xia in \cite{Xia2003}: in this model, a 1-rectifiable vector-valued measure on the underlying ambient space $\R^n$ (also called a 1-dimensional rectifiable current) $T=\vec T\|T\|$ is regarded as a transportation network connecting an initial positive measure $\mu^-$ to a target positive measure $\mu^+$ with the same mass. Here, $\|T\|$ is a positive Radon measure which is absolutely continuous with respect to the Hausdorff measure $\Haus^1$ restricted to a $1$-rectifiable set $E \subset \R^n$, and $\vec T$ is a unit vector field on $\R^n$, which is tangent to $E$ at $\|T\|$-almost every point. The condition that $T$ transports $\mu^-$ onto $\mu^+$ is encoded in the relation ${\rm div}\, T=\mu^--\mu^+$, which generalizes the classical Kirchhoff circuit laws. At every point $x$ in the ambient space, the direction of the flow of mass through $x$ and the intensity of the flow are represented respectively by $\vec T(x)$ and by the Radon-Nikod\'ym density $\theta(x)$ of $\|T\|$ with respect to the measure $\Haus^1$ restricted to $E$. The cost of such a transportation network is obtained integrating on $E$ a fractional power $\alpha\in(0,1)$ of the density $\theta$. In \cite{Maddalena2003}, an equivalent model was proposed by Maddalena, Morel, and Solimini, who presented a \emph{Lagrangian formulation} of the problem, in which one traces the trajectory of each mass particle, thus gaining the possibility to introduce stricter types of constraint (see the description of the \emph{mailing problem} in \cite{Bernot2009}). The equivalence of the Eulerian and Lagrangian formulations has recently been extended to models corresponding to different cost functionals, see e.g. \cite{Brancolini2016}. \\

The model introduced above describes the transportation of a single material. In the present paper, we are interested in the possibility to transport \emph{simultaneously} a number $m$ of different types of goods or commodities. We want to allow the interaction between different commodities to be independent: for instance, aggregating two unit masses of a certain pair of commodities might be more or less convenient than aggregating two unit masses of a different pair. In particular, the cost per unit length of the transportation of a collection of goods will depend not only on the total mass of that collection, but on the actual array whose components represent the masses (and the directions) of each single commodity. An example which justifies our interest is given by the \emph{power line communication technology} (PLC), which uses the electric power distribution network for data transmission; see \cite{MMT} and references therein. Even though electricity and data signal can be transported along the same network, they should not be treated as a single material, for example because the users' concentration and demands are not necessarily proportional. \\
 
In analogy with the model proposed by Xia, our given datum is an $m$-tuple of initial (positive) measures $(\mu_1^-,\dots,\mu_m^-)$ on $\R^n$, and an $m$-tuple of target (positive) measures $(\mu_1^+,\dots,\mu_m^+)$. For $j=1,\dots,m$, $\mu_j^-$ and $\mu_j^+$ represent the initial and the target distribution of the $j$-th commodity respectively (and therefore they must have equal masses). The difference between the initial and the target $m$-tuples can be written as a vector-valued measure $\nu$ on $\R^n$ with values in $\R^m$. The equality between the masses of each component is rephrased by requiring that $\nu(v)=0$ for every constant vector field $v:\R^n\to\R^m$. A transportation network connecting the initial measures to the target ones is a vector-valued measure $T=\vec T\|T\|$, where $\|T\|$ is a positive Radon measure on $\R^n$ and $\vec T:\R^n\to\R^{n\times m}$ is a unit vector field. The constraint is given by the relation ${\rm div}\, T=\nu$. In the language of Geometric Measure Theory, such objects are 1-dimensional normal currents in $\R^n$ \emph{with coefficients in $\R^m$}, and the divergence constraint corresponds to prescribing the boundary of the currents.\\

The cost of a multi-material transportation network is defined as follows. We consider a rather general function $\cost:\R^m\to[0,\infty)$, and we use it to first define the cost functional on a special class of measures, those which in the language of currents are called 1-dimensional \emph{polyhedral currents} in $\R^n$ with coefficients in $\R^m$. These are $\R^{n \times m}$-valued measures on $\R^n$ of the form 
\[
T = \sum_{{\bf e} \in E(\mathcal{G})} (\tau_{\bf e} \otimes \theta({\bf e})) \, \Ha^1 \trace {\bf e}\,,
\]
thus supported on a finite union of non-overlapping segments ${\bf e}$ (the \emph{edges} of a finite graph $\mathcal G$) oriented by $\tau_{{\bf e}} \in \R^n$ and with multiplicities $\theta({\bf e}) \in \R^m$; see the formal definition in Subsection \ref{multi_material_fluxes}. The cost functional, or \emph{energy} $\E$, for such a $T$ is simply obtained integrating on the finite graph $\mathcal{G}$ the function $\cost(\theta_1({\bf{e}}),\dots,\theta_m({\bf{e}}))$ with respect to the measure $\Haus^1$, namely
$$\E(T):=\sum_{{\bf e}\in E(\mathcal{G})}\cost(\theta_1({\bf{e}}),\dots,\theta_m({\bf{e}}))\Haus^1({\bf{e}}),$$
which is well-defined and lower semi-continuous on the class of polyhedral currents, under minimal assumptions on the function $\cost$ (see Definition \ref{def:mmtcost}). Heuristically, $\cost(\theta_1,\dots,\theta_m)$ represents the cost per unit length for the joint transportation of an amount $\theta_1$ of the commodity indexed by $1$, together with an amount $\theta_2$ of the commodity indexed by $2$, etc... Different signs of the $\theta_j$'s encode the possibility to transport the corresponding commodities with two possible orientations along each stretch of the transportation network. The energy of a general transportation network $T$ is defined via relaxation.\\

The model of multi-material transport presented above is the natural extension of the discrete model proposed in \cite{MMT}. We remark that the possibility to describe the transportation of a vector-valued quantity via 1-dimensional currents with coefficients in $\R^m$ was also suggested in the final comments of \cite{Brancolini2017}. In the present work, we show the existence of transportation networks minimizing the energy $\E$ for any admissible choice of the source and target measures, under minimal assumptions on the cost function $\cost$. Under mild additional assumptions, we prove that for any admissible given data the corresponding minimizers have finite energy. Under the same assumptions, we also show that the multi-material transportation problem is stable: namely, a sequence of minimizers corresponding to a converging family of given data converges to a minimizer of the limit problem. Finally, we provide an explicit representation formula for the energy of a class of transportation networks exhibiting a nice geometric structure, more precisely for those networks which can be described by \emph{rectifiable} $1$-currents. This is an important class of solutions: indeed, under very natural assumptions on $\cost$ all transportation networks with finite energy are necessarily rectifiable. In these cases, one expects the solutions to exhibit fractal-type behaviors, as it happens in the single-material setting; see e.g. \cite{Brancolini2014,PegonSantambrogioXia}.\\

The rest of the paper will be divided in $8$ sections. After briefly introducing some basic notation in Geometric Measure Theory in Section \ref{s:notation}, Section \ref{s:mmtp} will contain a formal introduction to the multi-material transportation problem together with the statement of our main results. For the sake of simplicity, we will adopt the language of vector-valued measures in order to obtain such first phrasing of the problem, even though the formalism which is best tailored to describe multi-material optimal transport is that offered by the \emph{theory of normal currents with coefficients in $\R^m$}. Indeed, as it will become apparent in the sequel, standard tools in the theory of currents allow a natural description of the geometric operations on transportation networks that are essential in our proofs; see e.g. the role of the slicing operators in the proof of our existence Theorem \ref{t:existence}, in particular Proposition \ref{prop:monotonicity}. For these reasons, Section \ref{s:syllabus_euclidean} is fully devoted to the development of the formalism of normal currents with coefficients in $\R^m$ and to the rephrasing of the multi-material transport problem within such framework. After a thorough analysis of the basic properties of the energy functional carried out in Section \ref{s:prelim}, we prove the general existence theorem of minimizers in Section \ref{s:existence}. In Section \ref{s:finite_energy} we discuss, instead, the existence of transportation networks with finite energy and the stability of the problem with respect to perturbations of the source and target measures. The representation formula for the energy of rectifiable networks concludes the paper. It is proved in Section \ref{s:representation} in the general setting of rectifiable chains of arbitrary dimension $k$ and with coefficients in a normed Abelian group ${\rm G}$, of which rectifiable $k$-currents with coefficients in $\R^m$ are a special case: the relevant preliminaries on ${\rm G}$-chains are collected in Section \ref{currents} for the reader's convenience.

\medskip 
\noindent\textbf{Acknowledgments.} This project has received funding from the European Union's Horizon 2020 research and innovation programme under the grant agreement No. 752018 (CuMiN). The first and second named authors have benefited from partial support from INdAM-GNAMPA. The third named author has been supported by the NSF Grants DMS-1565354, DMS-1361122 and DMS-1262411. The fourth named author has been partially supported by the SNF Grant 182565.

\section{Basic notation}\label{s:notation}

We shall always work in Euclidean space $\R^n$ with $n \geq 2$. The standard orthonormal basis of $\R^n$ is denoted by $(e_1,\dots,e_n)$, and the coordinates of a vector $a\in\R^n$ with respect to this basis are $(a_1,\dots,a_n)$. Given $a\in\R^n$ and $b\in\R^m$, we denote by $a\otimes b$ the element of $\R^{n\times m}\sim {\rm Mat}(n\times m)$ defined by $(a\otimes b)_{ij}:=a_ib_j$ for $i \in \{1,\ldots,n\}$ and $j \in \{1,\ldots,m\}$. We will make use of standard notation in multilinear algebra. In particular, the vector spaces of $k$-vectors and $k$-covectors in $\R^n$ ($1\leq k\leq n$) are denoted, respectively, by $\Lambda_k(\R^n)$ and $\Lambda^k(\R^n)$. We shall regard $\Lambda_{k}(\R^n)$ and $\Lambda^{k}(\R^n)$ as normed vector spaces with the \emph{mass norm} $|\cdot|$ and \emph{comass norm} $\|\cdot\|$ respectively (see \cite[1.8.1]{Federer_GMT}). We write $B(x,r)$ for the open ball of center $x\in \R^n$ and radius $r>0$. The symbol $|\cdot|$ will always denote the Euclidean norm in $\R^n$, and we will set $\Sf^{n-1} := \{ x \in \R^n \, \colon \abs{x} = 1 \}$. The characteristic function of a set $A$, taking values 0 and 1, is denoted by $\mathbbm 1_{A}$.
We denote by $\mathscr{M}(\R^n)$ the space of signed Radon measures on $\R^n$, namely the vector space of real-valued measures $\mu$ on the $\sigma$-algebra of Borel sets whose \emph{negative and positive parts}
$$\mu_-:=\frac{\|\mu\|-\mu}{2} \quad\mbox{and}\quad \mu_+ := \frac{\|\mu\|+\mu}{2}.$$
are Radon measures. Here, as usual, $\| \mu\|$ denotes the total variation of $\mu \in \mathscr{M}(\R^n)$. We denote also by $\mathscr{M}_+(\R^n)$ the subset of positive measures. Given a normed vector space $V$ with dual $V^*$, the duality pairing between two elements $w\in V^*$ and $v\in V$ is denoted by $\langle w,v\rangle$. We denote by $\mathscr{M}(\R^n,V)$ the space of vector-valued measures with values in $V$. By the Radon-Nikod\'ym theorem, every measure $T\in\mathscr{M}(\R^n,V)$ can be uniquely written as
\begin{equation}\label{rep_meas}
T=\vec T \|T\|\,,
\end{equation}
where $\|T\|\in\mathscr{M}_+(\R^n)$ is the \emph{total variation measure} of $T$ and $\vec T:\R^n\to V$ is a unit vector field, in the sense that $\|\vec T (x) \|_V = 1$ for $\|T\|$-a.e. $x \in \R^n$. The equality \eqref{rep_meas} means that, for every continuous vector field $w:\R^n\to V^*$ with compact support, it holds
$$T(w)=\int_{\R^n}\langle w,\vec T\rangle\;d\|T\|.$$
The \emph{mass} of a measure $T \in\mathscr{M}(\R^n,V)$ is the quantity 
$$\mass(T):=\|T\|(\R^n).$$

We denote by 
$$\spt (\mu):= \bigcap\{C\subset \R^n:C \mbox{ is closed and } \|\mu\|(\R^n \setminus C)=0\}$$
the \emph{support} of $\mu$. We say that $\mu$ is \emph{supported} on a Borel set $E$ if $\|\mu\|(\R^n\setminus E)=0$. We say that $\mu$ is \emph{atomic} if it is supported on a countable set, and \emph{discrete} or \emph{finite atomic} if it is supported on a set of finitely many points.
If $\mu$ is a Radon measure in $\R^n$ and $f \in L^1_{loc}(\R^n, \left(0,\infty\right);\mu)$ then we let $f\,\mu$ denote the Radon measure \[ (f\,\mu)(E) := \int_E f \, d\mu\,.\] In particular, for a measure $\mu\in\mathscr{M}(\R^n)$ and a Borel set \(E\subset \R^n\),  \(\mu\trace E\) is the \emph{restriction} of \(\mu\) to \(E\), i.e. the measure $\mathbbm 1_{E}\,\mu$.
We say that two measures $\mu$ and $\nu$ are mutually \emph{singular} if there exists a Borel set $E$ such that $\|\mu\|=\|\mu\|\trace E$ and $\|\nu\|=\|\nu\|\trace E^c$, where $E^c := \R^n \setminus E$. If $\{\mu_h\}_{h=1}^\infty$ is a sequence in $\mathscr{M}(\R^n,V)$, we say that $\mu_h$ weakly-* converges to $\mu \in \mathscr{M}(\R^n,V)$, and we write $\mu_h \wto \mu$, if  
\[
\lim_{h \to \infty} \mu_h(w) = \mu(w) \qquad \mbox{for every $w \in C_c(\R^n,V^*)$}\,.
\]

We use $\Haus^k$ to denote the $k$-dimensional Hausdorff measure, see \cite{Simon1983}. A $\Haus^k$-measurable set $E \subset \R^n$ is (countably) $k$-rectifiable if it can be covered by countably many $k$-dimensional Lipschitz graphs up to a $\Haus^k$-negligible set, that is if there are countably many Lipschitz functions $f_h \colon \R^k \to \R^n$ such that
\[
\Haus^k\left( E \setminus \bigcup_{h=1}^\infty f_h(\R^k) \right) = 0\,.
\]
A fundamental property of a $k$-rectifiable set $E$ in $\R^n$ is the existence of an \emph{approximate tangent plane} ${\rm Tan}(E,x)$ at $\Ha^k$-a.e. $x \in E$. This is a unique $k$-plane $\Pi$ in $\R^n$ characterized by the following property: there exists a function $g \in L^1_{loc}(E, \left( 0, \infty \right) ; \Ha^k)$ such that \[g(x + r\,\cdot) \, \Ha^k \trace \left( \frac{E-x}{r} \right) \wto g(x)\, \Ha^k \trace \Pi\] as Radon measures in $\R^n$ when $r \to 0^+$; see \cite[Definition 11.4]{Simon1983}.

\section{Multi-material transport problem}\label{s:mmtp}
A \emph{discrete} model for the multi-material transport problem is described in \cite{MMT}, using 1-dimensional integral currents with coefficients in  $\R^m$ ($m$ being the number of transported commodities). In that paper, the particles are assumed to have integer-valued masses (or, equivalently, integer multiples of a fixed real number). Here we describe a \emph{continuous} model, obtained via relaxation of a cost functional (similar to that introduced by Gilbert \cite{Gilbert1967}) defined on discrete transportation networks represented by directed graphs with multiplicities in $\R^m$. Although a proper description of the model would require notions from the theory of currents with coefficients in groups, in this section we present the model and we state the main results of the paper using the language of vector-valued measures, in order to make the content of the paper more accessible also to readers who are not familiar with the theory of currents. A drawback of this simplified presentation is the fact that, in the definition of cost functional, we need to use a notion of convergence (called flat-convergence) which is defined for currents and it would not have a natural definition for vector-valued measures. Hence, we will postpone the definition of such convergence to Section \ref{s:syllabus_euclidean}, where we present a brief summary of the notions from the theory of currents with coefficients in $\R^m$ that are used throughout the paper. 

\subsection{Multi-material fluxes}\label{multi_material_fluxes}
A 1-dimensional \emph{polyhedral current} in $\R^n$ with coefficients in $\R^m$ is a matrix-valued measure $T \in\mathscr{M}(\R^n,\R^{n\times m})$ of the form
\[
T = \sum_{{\bf e} \in E(\mathcal{G})} (\tau_{\bf e} \otimes \theta({\bf e})) \, \Ha^1 \trace {\bf e}\,,
\]
where:
\begin{itemize}
\item [(i)] $\mathcal{G}\subset \R^n$ is a finite graph, i.e. a set consisting of a finite union of closed line segments. The collection of all such segments is denoted $E(\mathcal{G})$, and each element ${\bf e}\in E(\mathcal{G})$ is called an \emph{edge} of the graph $\mathcal{G}$. We will assume that the edges are non-overlapping, i.e. two edges may intersect only at the end-points;
\item [(ii)] for each edge ${\bf e}\in E(\mathcal{G})$, $\tau_{\bf e} \in \Sf^{n-1}$ is a fixed orientation of ${\bf e}$, and ${\bf \theta}({\bf e}):=(\theta_1({\bf e}),\dots,\theta_m({\bf e}))\in\R^m$. Thus, $\tau_{\bf e} \otimes \theta({\bf e})$ is a rank-1 $(n \times m)$-matrix with all columns parallel to ${\bf e}$. We will call $\theta({\bf e})$ the vector-valued \emph{multiplicity} associated to ${\bf e}$ (note that $\theta({\bf e})$ is defined up to a sign, given that both $\tau_{\bf e}$ and $- \tau_{\bf e}$ are suitable orientations for ${\bf e}$).
\end{itemize}
Let us call $x_{\bf e}$ and $y_{\bf e}$ the end-points of ${\bf e}$, with the convention that $y_{\bf e}-x_{\bf e}$ is a positive multiple of $\tau_{\bf e}$. It is easy to check that the distributional divergence of $T$, namely the $\R^m$-valued distribution defined by
\[
{\rm div}\, T (\phi) := - T (D\phi) \qquad \mbox{for every $\phi \in C^\infty_c(\R^n,(\R^m)^*)$}\,,
\]
(with the obvious identifications) satisfies
$${\rm div}\,T=\sum_{e\in E(\mathcal{G})}{\bf \theta}({\bf e})(\delta_{x_{\bf e}}-\delta_{y_{\bf e}}),$$
where we denoted with $\delta_P$ the Dirac mass at the point $P\in\R^n$. The latter observation motivates the following definition.

\begin{defn}[Discrete multi-material flux]
Given two discrete vector-valued measures $\mu^-,\mu^+\in\mathscr{M}(\R^n,\R^m)$, and given $T\in\mathscr{M}(\R^n,\R^{n\times m})$ a 1-dimensional polyhedral current in $\R^n$ with coefficients in $\R^m$, we say that $T$ is a \emph{discrete multi-material flux between $\mu^-$ and $\mu^+$} if ${\rm div}\,T=\mu^--\mu^+$.
\end{defn}

Observe that a necessary condition for the existence of a discrete multi-material flux between two discrete vector-valued measures $\mu^-$ and $\mu^+$ is that $\mu^-(v)=\mu^+(v)$ for every constant vector field $v:\R^n\to\R^m$. The condition is also sufficient: indeed, given $\mu^-:=\sum_{\ell=1}^L\theta^-_\ell\delta_{x_\ell}$ and $\mu^+:=\sum_{h=1}^H\theta^+_h\delta_{y_h}$, the \emph{cone} $T$ over $\mu^+-\mu^-$ with vertex $0$ satisfies ${\rm div}\,T=\mu^--\mu^+$. This is defined as 
\begin{equation}\label{e:cone}
T:=\sum_{h=1}^H(\tau^+_h\otimes\theta^+_h)\Haus^1\trace S^+_h-\sum_{\ell=1}^L(\tau^-_\ell\otimes\theta^-_\ell)\Haus^1\trace S^-_\ell\,,
\end{equation} 
where we denoted $\tau^+_h$ and $\tau^-_\ell$ the unit vectors obtained normalizing $y_h$
and $x_\ell$ respectively (or $0$ if the corresponding point is the origin) and by $S^+_h$ and $S^-_\ell$ the segments joining $y_h$ and $x_\ell$ to the origin. 
The general definition of a multi-material flux between two (possibly diffuse) measures involves ``general'' matrix-valued measures.

\begin{defn}[Multi-material flux]\label{def_multi_mat_flux}
Given two vector-valued measures $\mu^-,\mu^+\in\mathscr{M}(\R^n,\R^m)$ with compact support, a matrix-valued measure $T\in\mathscr{M}(\R^n,\R^{n\times m})$ is a \emph{multi-material flux between $\mu^-$ and $\mu^+$} if its support is compact and ${\rm div}\,T = \mu^- - \mu^+$.
\end{defn}

Again, a necessary and sufficient condition for the existence of a multi-material flux between two compactly supported vector-valued measures $\mu^-$ and $\mu^+$ is that $\mu^-(v)=\mu^+(v)$ for every constant vector field $v:\R^n\to\R^m$. In this case, we say that the vector-valued measures $\mu^-$ and $\mu^+$  are \emph{compatible}. To check that the condition is sufficient one should generalize the argument given for the discrete setting, via the so called \emph{cone construction} (see \cite[4.3.14]{Federer_GMT}).

\begin{remark}[Normal currents with coefficients in $\R^m$ and multi-material fluxes]
In the language of currents (which we introduce in Section \ref{s:syllabus_euclidean}), every compactly supported one-dimensional normal current $T$ in $\R^n$ with coefficients in $\R^m$ having boundary $\mu^+-\mu^-$ is a multi-material flux between $\mu^-$ and $\mu^+$. The non-emptiness of the class of competitors is guaranteed again by the cone construction.
\end{remark}

\begin{remark}[Multi-material fluxes as transportation networks]\label{interpr}
Let $\nu=\vec\nu\|\nu\|$ be the difference $\mu^--\mu^+$. Writing $\vec\nu$ in components with respect to the standard basis of $\R^m$, one can represent $\nu$ via an $m$-tuple of real-valued measures $\nu_j$ ($j=1,\dots,m$) (the \emph{components} of $\nu$), where, for $j=1,\dots,m$, we denoted
$$\nu_j(A):=\nu(e_j\mathbbm 1_{A}) \quad\mbox{for every Borel set $A\subset\R^n$}.$$
Similarly, a multi-material flux $T$ between $\mu^-$ and $\mu^+$ can be represented via an $m$-tuple of vector-valued measures $T_j\in\mathscr{M}(\R^n,\R^n)$ (the components of $T$) by
$$T_j(v):=T(v\otimes e_j) \quad\mbox{for every Borel vector field $v:\R^n\to\R^n$}.$$
Denoting, for $j=1,\dots,m$, $(\nu_j)_-$ and $(\nu_j)_+$ the negative and the positive part of the real-valued measure $\nu_j$ respectively, the vector-valued measures $T_j$ are ``classical'' mass-fluxes between the measures $(\nu_j)_-$ and $(\nu_j)_+$ as in \cite[Definition 2.1]{Brancolini2017}. In conclusion, the multi-material flux $T$ can be interpreted as a transportation network which moves simultaneously the mass $(\nu_j)_-$ of the commodity indexed by $j$ onto the mass $(\nu_j)_+$, for every $j=1,\dots,m$.
\end{remark}

\subsection{The cost functional}
Generalizing \cite{Xia2003} (see also \cite{Brancolini2017}), we define a general \emph{multi-material transportation cost} $\cost:\R^m\to[0,\infty)$, and we define the cost functional (also called energy and therefore denoted $\E$) of a discrete multi-material flux $T$ associated to a finite graph $\mathcal{G}$ with multiplicity $\theta$ in $\R^m$, integrating $\cost({\bf \theta})$ on $\mathcal{G}$ with respect to $\Haus^1$. The cost functional of a general multi-material flux is defined via relaxation. \\

We first define a partial order $\preceq$ on $\R^m$ as follows: we write ${\bf \eta}\preceq{\bf \theta}$ if and only if $\sign ({\bf \eta}_j) \sign ({\bf \theta}_j)\geq 0$ and $|{\bf \eta}_j|\leq |{\bf \theta}_j|$, for every $j\in 1,\dots,m$. Notice that points which belong to the interior of distinct orthants are not comparable.  
\begin{defn}[Multi-material transportation cost]\label{def:mmtcost}
A \emph{multi-material transportation cost} is a function $\cost:\R^m\to[0,\infty)$ such that
\begin{itemize}
\item[(i)] $\cost$ is even and $\cost({\bf \theta})=0$ if and only if ${\bf \theta}=0$;
\item[(ii)] $\cost$ is lower semi-continuous;
\item[(iii)] $\cost$ is subadditive, i.e. $\cost({\bf \eta}+{\bf \theta})\leq \cost({\bf \eta})+\cost({\bf \theta})$;
\item[(iv)] $\cost$ is monotone non-decreasing, i.e. $\cost({\bf \eta})\leq \cost({\bf \theta})$ if ${\bf \eta}\preceq {\bf \theta}$.
\end{itemize}
\end{defn}

\begin{remark}
It is worth noticing that the conditions (i) to (iv) on $\cost$ are natural assumptions in the problem we want to describe. The transportation cost is non-negative, it vanishes only when there is no mass to transport, and it does not depend on the orientation of the net flow of each commodity, which justifies (i); without (iii), it would be easy to produce counterexamples to the existence of solutions; furthermore, the validity of (iii) with a strict inequality (whenever $\eta$ and $\theta$ are non-zero) produces \emph{branched} solutions; as a consequence of (iv), the cost does not decrease if the net flow of each single commodity does not decrease, as one would expect; finally, (ii) is necessary to ensure that the relaxed functional $\E$ induced by $\cost$ on general multi-material fluxes coincides with the original one on discrete multi-material fluxes as described in Definition \ref{def_cost_functional} below.
\end{remark}

\begin{defn}[Cost functional]\label{def_cost_functional}
\quad
\begin{itemize}
\item [(i)](Discrete case) Given a discrete multi-material flux $T$ associated to a finite graph $\mathcal{G}$ with multiplicity $\theta$ in $\R^m$, its \emph{cost functional} (or \emph{energy}) is the quantity
$$\E(T):=\sum_{{\bf e}\in E(\mathcal{G})}\cost({\bf \theta}({\bf e}))\Haus^1({\bf e}).$$
\item [(ii)](General case) Given two compactly supported, compatible vector-valued measures $\mu^-,\mu^+\in\mathscr{M}(\R^n,\R^m)$ and given $T\in\mathscr{M}(\R^n,\R^{n\times m})$ a multi-material flux between $\mu^-$ and $\mu^+$, we define
$$\E(T):=\inf\{\liminf_h \E(T_h)\,:\, \flat(T_h-T)\to 0\},$$
where $T_h$ are discrete multi-material fluxes between discrete measures $\mu^-_h$ and $\mu^+_h$, all supported on a common compact set, and $\flat$ denotes the flat-distance between the associated flat currents (see \S \ref{ss:flat}). 
\end{itemize}
\end{defn}
\begin{remark}[Comments on the definition]
\quad
\begin{itemize}
\item [(i)] (Discrete case) Observe that the energy is well-defined: in particular, since $\cost$ is even, $\E$ does not depend on the orientation chosen on each edge ${\bf e}\in E(\mathcal{G})$. 
\item [(ii)] (General case) We will give a precise definition of flat-distance later. For the moment, we can anticipate that, whenever $$\sup_h\{\mass(T_h)+\mass(\mu^-_h-\mu^+_h)\}<\infty,$$ it holds 
$$\flat(T_h-T)\to 0 \iff (T_h\wto T \mbox{ and } (\mu^-_h-\mu^+_h)\wto (\mu^--\mu^+)).$$
Nevertheless, we remark that the condition $\flat(T_h-T)\to 0$ does not imply in general that the masses of the $T_h$'s and of the $(\mu^-_h-\mu^+_h)$'s are equi-bounded.

The existence of discrete multi-material fluxes $T_h$ between discrete measures $\mu_h^-$ and $\mu_h^+$ with $\flat(T_h - T) \to 0$ (and thus with $\flat((\mu_h^- - \mu_h^+) - (\mu^- - \mu^+)) \to 0$) is a consequence of the polyhedral approximation theorem for normal currents; see \cite[Theorem 4.2.24]{Federer_GMT}. 

\end{itemize}
\end{remark}
\subsection{Statement of the problem and main existence result}
Now we can naturally define the following minimization problem.
\begin{defn}[Multi-material transport problem] \label{defn:mmtprob}
Given a pair of compactly supported, compatible vector-valued measures $\mu^-$ and $\mu^+\in\mathscr{M}(\R^n,\R^m)$, we say that a multi-material flux $T\in\mathscr{M}(\R^n,\R^{n\times m})$ between $\mu^-$ and $\mu^+$ is a solution of the multi-material transport problem for the pair $(\mu^-,\mu^+)$ if 
$$\E(T)\leq \E(S), \quad\mbox{ for every multi-material flux $S$ between $\mu^-$ and $\mu^+$}.$$ 
\end{defn}

We will prove the following result.
\begin{theorem}[Existence of minimizers]\label{t:existence}
Let $\mu^-$ and $\mu^+\in\mathscr{M}(\R^n,\R^m)$ be a pair of compactly supported, compatible vector-valued measures. Then the associated multi-material transport problem admits a solution.
\end{theorem}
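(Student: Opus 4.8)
The plan is to run the direct method in the calculus of variations; the two non-routine ingredients are a \emph{compactness} statement for a well-chosen minimizing sequence and the \emph{lower semicontinuity} of the relaxed energy. First I would note that the class of competitors is non-empty (cone construction, cf.\ \eqref{e:cone} and Definition \ref{def_multi_mat_flux}), so $I := \inf\{\E(S) : S \text{ a multi-material flux between } \mu^- \text{ and } \mu^+\}$ is a well-defined element of $[0,+\infty]$; if $I = +\infty$ any competitor minimizes, so I may assume $I < +\infty$. Taking a minimizing sequence and unwinding Definition \ref{def_cost_functional}(ii) by a diagonal argument — for the $k$-th element pick a discrete competitor that is flat-close to it with discrete energy larger by at most $1/k$ — I would reduce to a minimizing sequence of \emph{discrete} multi-material fluxes $T_k$, associated with finite graphs $\mathcal{G}_k$, with $\di T_k = \mu_k^- - \mu_k^+$ and $(\mu_k^- - \mu_k^+) \wto (\mu^- - \mu^+)$, all supported in a fixed compact set (localized near $\spt \mu^- \cup \spt \mu^+$), and with $\sup_k \E(T_k) \le I+1$.

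Next I would record the cheap consequences of the hypotheses on $\cost$. Since $\cost$ is lower semicontinuous, positive off the origin, monotone and subadditive, one gets $\cost(\theta) \ge c_1 \min\{|\theta|, 1\}$ for all $\theta$, with $c_1 := \min_{\Sf^{m-1}} \cost > 0$ (for $|\theta| \ge 1$ use monotonicity and $\theta/|\theta| \preceq \theta$; for $|\theta| < 1$ apply subadditivity to $\lceil |\theta|^{-1}\rceil\, \theta$). Hence $\sum_{{\bf e}} \min\{|\theta({\bf e})|, 1\}\, \Haus^1({\bf e}) \le c_1^{-1}(I+1)$ for every $T_k$, so the mass of the portion of $T_k$ with multiplicity $|\theta| \le 1$ is uniformly bounded, and the total \emph{length} of the edges with $|\theta| \ge 1$ is uniformly bounded. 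Also $\mass(\partial T_k) = \mass(\mu_k^- - \mu_k^+)$ is uniformly bounded, since $\mu_k^- - \mu_k^+ \wto \mu^- - \mu^+$.

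The hard part will be bounding $\mass(T_k)$ itself, which the above does \emph{not} give: an edge may carry huge multiplicity at small energy cost ($\cost$ is at most linear by subadditivity, possibly even bounded), and such sequences can fail to converge. The remedy is to replace each $T_k$ by a competitor of no larger energy carrying no ``wasteful recirculation''. Concretely, I would use the slicing operators on currents with coefficients in $\R^m$ (Section \ref{s:syllabus_euclidean}) together with the monotonicity property alluded to in the introduction (Proposition \ref{prop:monotonicity}) to produce, for each $k$, a discrete multi-material flux between $\mu_k^-$ and $\mu_k^+$ — still called $T_k$ — with $\E(T_k)$ unchanged and such that, for every $i \in \{1,\dots,n\}$ and $\Leb^1$-a.e.\ $t$, the $0$-dimensional slice $\langle T_k, x_i, t\rangle$ (a polyhedral chain with coefficients in $\R^m$) has mass at most $\mass(\mu_k^-) + \mass(\mu_k^+)$: morally, the net flux across each hyperplane cannot exceed what the prescribed marginals dictate, any ``circulating'' portion being removable or reroutable without increasing $\E$ because $\cost$ is even and subadditive. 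Combining this with the coarea inequality $\mass(T_k) \le \sum_{i=1}^n \int_{\R} \mass(\langle T_k, x_i, t\rangle)\, dt$ (legitimate since $\sum_i |(\tau_{\bf e})_i| \ge |\tau_{\bf e}| = 1$ on each edge) and with the uniform bound on $\diam(\spt T_k)$ yields $\sup_k \mass(T_k) < \infty$. This circulation-removal step is the genuinely delicate one, and the reason an auxiliary proposition is needed: naively subtracting a cycle from an edge that already carries ``useful'' flux in the opposite direction \emph{increases} the multiplicity (hence possibly the cost) there, so the cancellation must be realized as a rerouting governed by the flow/slicing structure rather than as a blunt subtraction.

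Finally, with $\sup_k\big(\mass(T_k) + \mass(\partial T_k)\big) < \infty$ and all $T_k$ supported in a fixed compact set, I would invoke the compactness theorem for normal $1$-currents — applied to the $m$ real components $(T_k)_j$ separately, which is legitimate since $\R^m$ is finite-dimensional — to extract a subsequence converging weakly-$*$ (equivalently, in the flat distance, by the uniform mass and boundary-mass bounds) to a compactly supported normal $1$-current $T$ with coefficients in $\R^m$. Weak-$*$ continuity of $\di$ and $(\mu_k^- - \mu_k^+) \wto (\mu^- - \mu^+)$ give $\di T = \mu^- - \mu^+$, so $T$ is an admissible competitor. Since $\E$ is lower semicontinuous along flat convergence — automatic for a functional obtained by relaxation with respect to the flat distance, by the same diagonal argument used above — one concludes $\E(T) \le \liminf_k \E(T_k) = I \le \E(T)$, i.e.\ $T$ is a minimizer.
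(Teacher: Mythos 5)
Your skeleton (direct method; reduction to a polyhedral minimizing sequence via a diagonal argument; lower semicontinuity for free from the relaxed definition; the observation that $\cost(\theta)\gtrsim\min\{|\theta|,1\}$, which bounds the mass of the low-multiplicity portion and the \emph{length} of the high-multiplicity portion; compactness once masses are bounded) matches the paper, and you correctly isolate the one genuinely hard step: controlling the mass of the high-multiplicity edges. But the mechanism you propose for that step does not work. Your claimed intermediate statement --- that after ``circulation removal'' each $0$-dimensional slice $\langle T_k,x_i,t\rangle$ has mass at most $\mass(\mu_k^-)+\mass(\mu_k^+)$ --- conflates \emph{net} flux with \emph{gross} flux. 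The slice mass is the gross flux, and it is not controlled by the marginals even for acyclic competitors: a simple (injective) polygonal curve from $a$ to $b$ that zigzags across the hyperplane $\{x_1=t\}$ three times is acyclic, has boundary mass $2$, but its slice at level $t$ has mass $3$; iterating the zigzag makes the ratio arbitrarily large. The ``rerouting'' you invoke to repair this would have to straighten such paths without increasing $\en$, for all coordinate hyperplanes simultaneously, and no such operation is exhibited; it is in any case not what Proposition \ref{prop:monotonicity} provides --- that proposition only says that a \emph{piece} of $T$ (a pointwise multiplicity reduction $\lambda\vec T\|T\|$ with $\lambda\in[0,1]$) has no larger energy.

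The paper closes this gap differently, and you should too: pass to the acyclic part $T'_j$ of each component $T_j$ (removing the \emph{largest contained cycle} in the sense of \eqref{e:contained_cicle}, cf.\ \cite[Proposition 3.8]{Paolini2012}; because a contained cycle is by definition a piece, there is no cancellation issue, $T'$ is a piece of $T$, so $\en(T')\le\en(T)$ by Proposition \ref{prop:monotonicity}, and $\partial T'=\partial T$). The payoff of acyclicity is not a slice bound but a \emph{pointwise multiplicity bound}: Lemma \ref{l:bound_rect}, proved via the decomposition of an acyclic normal current into a measure $\pi$ on curves with $2\M(\pi)=\M(\partial T)$, gives $|\theta'(x)|\le\frac12\mass(\partial T)$ a.e. Once the multiplicity is uniformly bounded, Lemma \ref{bigteo}(6) yields $|\theta'|\le c\,\cost(\theta')$ pointwise, hence $\mass(T'_{\rec})=\int|\theta'|\,d\Ha^1\le c\,\en(T')$ directly --- no coarea or slice estimate is needed. (The diffuse part, which your reduction to polyhedral competitors sidesteps, is handled in the paper via the explicit formula \eqref{explicit_components} and the directional derivatives $\frac{\partial^+\cost}{\partial e_j}(0)$.) With $\mass(T'_h)\le C\,\en(T_h)$ and $\mass(\partial T'_h)$ bounded, the compactness theorem applies and the direct method concludes as you describe.
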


\subsection{Stability of minimizers}
Once the existence of solutions has been guaranteed, it is natural to ask whether minimizers of multi-material transport problems enjoy a stability property, that is, whether, under suitable assumptions, they converge to minimizers of the limit problem. Such a property is clearly crucial in view of numerical simulations. In particular, it paves the way to exploiting the calibration technique introduced in \cite{Marchese2016c,Marchese2016b} and extended to the discrete multi-material transport problem in \cite{MMT}. We begin with the following remark.

\begin{remark}[Multi-material fluxes with finite energy]
Even if the class of competitors for a given pair of compatible measures $(\mu^-, \mu^+)$ is always non-empty, the multi-material transport problem could be trivial: namely, it is possible that there is no multi-material flux between $\mu^-$ and $\mu^+$ with finite energy. In this case, we can say that every competitor is a solution. In Section \ref{s:finite_energy}, we give a sufficient condition on the multi-material transportation cost $\cost$ for the problem to be non-trivial, namely for every pair of compactly supported, compatible measures $(\mu^-,\mu^+)$ to admit a competitor with finite energy. Following \cite{Brancolini2017}, we call such multi-material transportation costs \emph{admissible} (see Definition \ref{def:admiss}).
\end{remark}

Without any assumptions on the cost functional $\cost$, stability results for branched transportation problems are not elementary (see e.g. \cite{Colombob,Colomboc,Colombo2017}). In \S \ref{ss:stability} we prove that, if the multi-material transportation cost is admissible, then the multi-material transport problem is stable.
\begin{theorem}[Stability of minimizers]\label{t:stability}
Let $\cost$ be an admissible multi-material transportation cost. Let $\mu^-_h,\mu^+_h$ be a sequence of pairs of compatible vector-valued measures in $\mathscr{M}(\R^n,\R^m)$ all supported on a common compact set $K$, and let $T_h$ be minimizers of the multi-material transport problem for the pair $(\mu^-_h,\mu^+_h)$. Assume, moreover, that 
$$\mu^\pm_h\wto\mu_{\infty}^\pm\quad\mbox{ and }\quad\sup_h\{\M(T_h)\}<\infty\,.$$
Then, up to subsequences, $T_h\wto T_\infty$, where $T_\infty$ is a minimizer of the multi-material transport problem for the pair $(\mu_\infty^-,\mu_\infty^+)$.
\end{theorem}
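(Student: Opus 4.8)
The plan is to run the direct method. Let $K$ be a compact set containing the supports of all the $\mu^\pm_h$, hence also of $\mu^\pm$. From $\mu^\pm_h\wto\mu^\pm$ and the Banach--Steinhaus theorem we get $\sup_h\M(\mu^\pm_h)<\infty$, so $\sup_h\M(\mu^-_h-\mu^+_h)=\sup_h\M({\rm div}\,T_h)<\infty$; together with $\sup_h\M(T_h)<\infty$ this is a uniform bound on the normal masses of the $T_h$, all supported in $K$. By weak-$^*$ sequential compactness of bounded subsets of $\mathscr{M}(\R^n,\R^{n\times m})$ I extract a subsequence (not relabelled) with $T_h\wto T$, $\spt(T)\subset K$; continuity of the distributional divergence under weak-$^*$ convergence gives ${\rm div}\,T=\mu^--\mu^+$. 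Being a compactly supported normal current with prescribed boundary, $T$ is a multi-material flux between $\mu^-$ and $\mu^+$ by the Remark following Definition~\ref{def_multi_mat_flux} (an application of the polyhedral approximation theorem), i.e. an admissible competitor; moreover $\flat(T_h-T)\to0$ by the equivalence recalled after Definition~\ref{def_cost_functional}.

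Next I would prove the lower-semicontinuity inequality $\E(T)\le\liminf_h\E(T_h)$. Since $\E$ is a relaxed functional it is lower semicontinuous for $\flat$-convergence not only on discrete fluxes but on all multi-material fluxes: one sees this from Definition~\ref{def_cost_functional}(ii) by a diagonal argument (approximate each $T_h$ by discrete fluxes $T_{h,j}$ supported in a fixed neighbourhood of $K$ with $\flat(T_{h,j}-T_h)\to0$ and $\E(T_{h,j})\le\E(T_h)+1/h$ for $j$ large, then diagonalise), and it is one of the basic properties of $\E$ established in Section~\ref{s:prelim}. To upgrade $T$ to a minimizer I would fix an arbitrary competitor $S$ for $(\mu^-,\mu^+)$ with $\E(S)<\infty$ and construct, for each $h$, a competitor $S_h$ for $(\mu^-_h,\mu^+_h)$ with $\limsup_h\E(S_h)\le\E(S)$; combined with the minimality of $T_h$ and the lower-semicontinuity above, this gives $\E(T)\le\liminf_h\E(T_h)\le\liminf_h\E(S_h)\le\E(S)$, hence the claim since $S$ is arbitrary. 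Concretely, set $S_h:=S+R_h$ where $R_h$ is a multi-material flux with ${\rm div}\,R_h=\nu_h:=(\mu^-_h-\mu^+_h)-(\mu^--\mu^+)$; such $R_h$ exist by the cone construction, because $\nu_h$ annihilates constant vectorfields (as $\mu^\pm_h$ and $\mu^\pm$ do) and is therefore compatible. By subadditivity of $\E$ (Section~\ref{s:prelim}), $\E(S_h)\le\E(S)+\E(R_h)$, so it suffices to choose the $R_h$ with $\E(R_h)\to0$.

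This last point is the heart of the argument, and the only place where admissibility of $\cost$ is used. We know $\nu_h\wto0$, $\sup_h\M(\nu_h)<\infty$ and $\spt(\nu_h)\subset K$, and we must show that the least energy among multi-material fluxes with divergence $\nu_h$ tends to $0$. I would argue by a dyadic multiscale decomposition. After passing to a further subsequence so that $\|\nu_h\|\wto\lambda$, fix for each $k\in\N$ a grid of $2^{nk}$ cubes $Q^k_i$ of side $\sim2^{-k}$ covering $K$, with $\lambda$-null faces, and write $\nu_h=(\nu_h-\nu^k_h)+\nu^k_h$, where $\nu^k_h:=\sum_i\nu_h(Q^k_i)\,\delta_{c^k_i}$ concentrates at the centers $c^k_i$ the $\R^m$-mass of $\nu_h$ on each $Q^k_i$. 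Both summands are compatible, and thanks to subadditivity of $\E$ so is the least-energy functional, so it is enough to bound the two pieces. For $\nu_h-\nu^k_h$ I would use, cube by cube, the quantitative estimate underlying the finite-energy construction of Section~\ref{s:finite_energy} — for an admissible $\cost$, the least energy of a flux with compactly supported compatible divergence $\sigma$ is at most $C\diam(\spt\sigma)\,\M(\sigma)^\alpha$ for an exponent $\alpha>1-1/n$ — and then sum over the $2^{nk}$ cubes via the concavity inequality $\sum_i a_i^\alpha\le(2^{nk})^{1-\alpha}\left(\sum_i a_i\right)^\alpha$; this gives the bound $C\,(\sup_h\M(\nu_h))^\alpha\,2^{k(n(1-\alpha)-1)}$, which is independent of $h$ and tends to $0$ as $k\to\infty$ precisely because $n(1-\alpha)-1<0$. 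For $\nu^k_h$, with $k$ fixed, the atoms sit at the fixed points $c^k_i$ and have masses $\nu_h(Q^k_i)\to0$ as $h\to\infty$ (by $\nu_h\wto0$ and $\lambda(\partial Q^k_i)=0$), so the energy of the cone over $\nu^k_h$ tends to $0$ as $h\to\infty$ for each fixed $k$. Letting first $h\to\infty$ and then $k\to\infty$ yields $\E(R_h)\to0$, and the proof is complete.

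Accordingly, the genuine difficulty is concentrated in the third paragraph: isolating, from the definition of admissibility, the scale-invariant sub-linear bound on the least energy in terms of the mass, and organizing the multiscale estimate so that the geometric series converges; the compactness and lower-semicontinuity steps are routine given the calculus of currents with coefficients in $\R^m$ from Section~\ref{s:syllabus_euclidean} and the basic properties of $\E$ from Section~\ref{s:prelim}.
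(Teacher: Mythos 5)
Your proof is correct and follows essentially the same route as the paper: direct method plus a recovery-sequence construction for an arbitrary competitor $S$, where the correcting flux $R_h$ with $\mathrm{div}\,R_h=\nu_h$ is built in two stages (a fine-scale flux from $\nu_h$ to its dyadic discretization, controlled uniformly in $h$ via admissibility, plus a cone over the discretized part, whose atoms vanish as $h\to\infty$ at fixed scale); the paper packages the same idea as a contradiction argument with $S_h:=S+C+(T_h^l)_+-(T_h^l)_--(T_\infty^l)_++(T_\infty^l)_-$.

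One imprecision worth flagging: the quantitative estimate you attribute to Section~\ref{s:finite_energy}, namely that the least energy of a flux with divergence $\sigma$ is at most $C\,\diam(\spt\sigma)\,\M(\sigma)^\alpha$ with $\alpha>1-1/n$, is \emph{not} what admissibility (Definition~\ref{def:admiss}) gives in general — it holds for power-law $\beta$ but fails for admissible costs such as $\beta(x)=x^{1-1/n}|\log x|^{-2}$ near $0$. The correct general form, which is what the paper actually uses, is the tail estimate \eqref{stima_eeeneeergeticaaa}: the energy of the flux joining the scale-$l$ discretization to the measure is bounded by $C\,\diam(Q)\max\{1,\M(\nu_\pm)\}\sum_{k\ge l}S^\beta(n,k+1)$, uniformly in $h$, and this tail vanishes as $l\to\infty$ precisely by the integrability condition \eqref{admiss}. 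Substituting this for your per-cube power-law summation (your $\sum_i a_i^\alpha\le (2^{nk})^{1-\alpha}(\sum_i a_i)^\alpha$ step is replaced by the concavity-of-$\beta$ inequality in the proof of Proposition~\ref{prop:ex_finite}) repairs the argument without changing its structure.
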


\section{Currents with coefficients in $\mathbb{R}^m$}\label{s:syllabus_euclidean}

As anticipated in the introduction, in order to tackle the multi-material transport problem we will take advantage of the formalism and the tools that are typical of the theory of currents. In this section we define currents with coefficients in $\R^m$ as the dual of a suitable space of differential forms. When we write \emph{classical} forms/currents, we refer to forms/currents with coefficients in $\R$ as in \cite[Section 4]{Federer_GMT}; a concise exposition, mostly sufficient to our aims, can also be found in \cite[2.5]{Brancolini2017}. The main goal of this section is to convey the idea that the properties of a current with coefficients in $\R^m$ can be studied by applying the results of the classical theory to the $m$-tuple of its \emph{components}.

\subsection{$\R^m$-valued covectors and forms}

A map 
\begin{equation}
\omega: \Lambda_k(\mathbb{R}^n) \times \R^m \to \mathbb{R}
\end{equation}
is an $\R^m$-valued $k$-covector on $\mathbb{R}^n$ ($1\leq k\leq n$) if:
\begin{enumerate}
\item $\forall \tau \in \Lambda_k(\mathbb{R}^n)$, $\omega(\tau,\cdot) \in (\R^m)^*$;
\item $\forall v \in \mathbb{R}^m$, $\omega(\cdot,v): \Lambda_k(\mathbb{R}^n) \to \mathbb{R}$ is a classical $k$-covector.
\end{enumerate}
The evaluation will be denoted with $\omega(\tau,v)$. The space of $\R^m$-valued $k$-covectors on $\R^n$ is denoted $\Lambda^k_{\R^m}(\mathbb{R}^n)$. We also set $\Lambda^0_{\R^m}(\mathbb{R}^n):=(\R^m)^*$.\\

For every $k$, the space of $\R^m$-valued $k$-covectors on $\mathbb{R}^n$ is a normed vector space when endowed with the norm $$\|\omega\| := \sup\{|\omega(\tau,\cdot)|\, \colon \, |\tau| \leq 1\,, \tau \text{ is simple}  \}\,,$$ where we have called \emph{simple} any $\tau \in \Lambda_k(\R^n)$ which can be given the form $\tau = \tau_1 \wedge \ldots \wedge \tau_k$ with each $\tau_l \in \R^n$. We can write the action of an $\R^m$-valued $k$-covector as $$\omega(\tau, v) = \sum_{j = 1}^mv^j\omega_j(\tau)\,,$$ 
where $v^j$ are the components of $v$ in the standard basis $\{e_1,\dots,e_m\}$ of $\R^m$, and, for $j=1,\dots,m$, the functions $\omega_j:\tau\mapsto \omega(\tau, e_j)$ are classical $k$-covectors, called the \emph{components} of $\omega$.\\

\noindent An \emph{$\R^m$-valued differential $k$-form on $\R^n$} is a map $$\omega : \R^n \to \Lambda^k_{\R^m}(\mathbb{R}^n).$$ We say that $\omega$ is smooth if every component $\omega_j$ is a classical smooth differential $k$-form. We denote by $$\mathscr{D}^{k}_{\R^m}(\R^n) := C_c^\infty(\R^n,\Lambda^k_{\R^m}(\mathbb{R}^n))$$ the vector space of smooth $\R^m$-valued differential $k$-forms on $\R^n$ with compact support.

\noindent The exterior differential of an $\R^m$-valued differential $k$-form $\omega$ on $\R^n$ is defined as the $\R^m$-valued differential $(k+1)$-form $d\omega$ on $\R^n$ whose components satisfy $(d\omega)_j=d(\omega_j)$, for every $j=1,\dots,m$. Moreover, the functional $\|\omega\|_{c} := \sup_{x \in \R^n} \|\omega(x)\|$ defines a norm on $\mathscr{D}^k_{\R^m}(\R^n)$, called the \emph{comass norm}.

\subsection {Currents with coefficients in $\R^m$}
Let $T$ be a linear functional on $\mathscr{D}^k_{\R^m}(\R^n)$. The components of $T$ are the linear functionals on $\mathscr{D}^k(\R^n) := C_c^\infty(\R^n,\Lambda^k(\mathbb{R}^n))$ defined by $T_j(\omega) := T(\hat\omega_j)$, where $\hat\omega_j$ is the $\R^m$-valued differential $k$-form on $\R^n$ whose $j$-th component coincides with $\omega$ and all other components are zero. We say that $T$ is continuous if and only if every component $T_j$ is a classical $k$-dimensional current.
The space of continuous linear functionals on $\mathscr{D}^k_{\R^m}(\R^n)$ is called the space of $k$-dimensional currents with coefficients in $\R^m$, and will be denoted $\mathscr{D}^{\R^m}_k(\R^n)$. We will sometimes write $T = \left( T_1,\dots,T_m \right)$ if $T \in \mathscr{D}_k^{\R^m}(\R^n)$ has components $T_1,\dots,T_m$.

\subsection{Boundary and mass}
Let $T\in \mathscr{D}^{\R^m}_k(\R^n)$, with $1\leq k\leq n$. The \emph{boundary} of $T$ is the current $\partial T \in \mathscr{D}^{\R^m}_{k - 1}(\R^n)$ defined by
\[
\partial T(\phi) := T(d\phi), \;\;\forall \phi \in \mathscr{D}^{k-1}_{\R^m}(\R^n).
\] 
Observe that $(\partial T)_j = \partial T_j$, namely that $\partial T = \left( \partial T_1,\dots,\partial T_m \right)$ if $T = \left( T_1,\dots,T_m\right)$. Also note that $\partial(\partial T) = 0$ for every $T \in \mathscr{D}_k^{\R^m}(\R^n)$.

The functional on $\mathscr{D}^k_{\R^m}(\R^n)$ defined by
\begin{equation} \label{mass for currents coeff Rm}
\M(T) := \sup\{|T(\omega)| : \|\omega\|_c \leq 1 \}
\end{equation}
is called \emph{mass}. A current $T$ with coefficients in $\R^m$ such that $\M(T) + \M(\partial T) < + \infty$ is called \emph{normal}. The space of $k$-dimensional normal currents on $\R^n$ with coefficients in $\R^m$ will be denoted by $\mathscr{N}^{\R^m}_k(\R^n)$.

\subsection{Currents with finite mass}\label{curr_finite_mass}
To every current $T\in \mathscr{D}^{\R^m}_k(\R^n)$ with finite mass one can associate a finite Borel measure $\|T\| \in \mathscr{M}_+ (\R^n)$ defined on open sets by
\[
\|T\|(\Omega) := \sup\{T(\omega)\, \colon \, \omega \in\mathscr{D}^k_{\R^m}(\R^n)\,, \|\omega\|_c\le 1\,, \spt(\omega)\subset \Omega\}.
\]

Let $T_j$ be the components of $T$, for $j=1,\dots,m$. Since every $T_j$ is a classical current with finite mass, Riesz's representation theorem allows to represent $T_j$ by integration, in the sense that
\begin{equation} \label{integrazione sulle componenti}
T_{j}(\omega) = \int_{\R^n} \langle\omega_{x},\tau_j(x)\rangle\, d\|T_j\|(x) \qquad \forall\, \omega \in \mathscr{D}^k(\R^n)\,,
\end{equation}
where the measure $\|T_j\|$ is defined on open sets as above by
\[
\|T_j\|(\Omega) := \sup\{T_j(\omega)\, \colon \, \omega \in\mathscr{D}^k(\R^n)\,, \|\omega\|_c\le 1\,, \spt(\omega)\subset \Omega\}\,,
\]
and where $\tau_j \colon \R^n \to \Lambda_k(\R^n)$ is a $\|T_j\|$-measurable classical $k$-vector field with mass $|\tau_j| = 1$ at $\|T_j\|$-almost every point. 

Now, the very definition of components easily implies that each measure $\|T_j\|$ is absolutely continuous with respect to $\|T\|$. Hence, by the Radon-Nikod\'ym theorem, there exist $\|T\|$-integrable, non-negative functions $t_j \colon \R^n \to \R$ with $t_j \leq 1$ $\|T\|$-almost everywhere such that $\|T_j\| = t_j \|T\|$. Let $\omega \in \mathscr{D}^k_{\R^m}(\R^n)$ be an $\R^m$-valued differential $k$-form, let $\omega_j$ be its components, and let again $\hat{\omega}_j$ denote the $\R^m$-valued differential $k$-form having $\omega_j$ as its $j$\textsuperscript{th} component and all the other components set equal to zero. Then, $\omega = \sum_{j} \hat{\omega}_j$, and we can compute
\begin{equation} \label{rappr per integrazione}
\begin{split}
T(\omega) &= T\left(\sum_{j=1}^m \hat{\omega}_j\right) = \sum_{j=1}^m T(\hat{\omega}_j) = \sum_{j=1}^m T_j(\omega_j) 
= \sum_{j=1}^m \int_{\R^n} \langle(\omega_j)_x,\tau_j(x)\rangle \, d\|T_j\|(x)\\
&= \int_{\R^n} \sum_{j=1}^m \omega_x(\tau_j(x), t_j(x) e_j) \, d\|T\|(x)\,.
\end{split}
\end{equation} 

If we let $A_T \colon \, \R^n \to \Lambda_k(\R^n) \otimes \R^m$ denote the $\|T\|$-measurable tensor field given by
\[
A_T := \sum_{j=1}^m \tau_j \otimes t_j e_j\,,
\]
we can formally write $T = A_T \|T\|$, which has to be understood in the sense that the action of $T$ on any $\omega \in \mathscr{D}^k_{\R^m}(\R^n)$ is as prescribed by equation \eqref{rappr per integrazione}. The $j$\textsuperscript{th} component of $T$ is represented by the $j$\textsuperscript{th} ``column'' of the tensor field $A_T$, as we can write $T_j = \tau_j\, t_j \|T\|$ for every $j=1,\dots,m$.

\subsection{Rectifiable currents with coefficients in $\R^m$} \label{ssec:correnti rettificabili}
Let $T$ be a $k$-current with coefficients in $\R^m$ and finite mass. 
We say that $T$ is \emph{rectifiable}, and we write $T \in \Rc_{k}^{\R^m}(\R^n)$, if it can be represented as $T = (\tau \otimes \theta)\, \Ha^k \trace E$, where:
\begin{itemize}
\item [(i)] $E$ is a countably $k$-rectifiable set in $\R^n$; 
\item [(ii)] $\tau$ is a \emph{simple} $k$-vector field with $|\tau(x)| = 1$ and $\tau(x)$ is orienting the approximate tangent space $\Tan(E,x)$ at $\mathcal{H}^k$-a.e. $x\in E$: that is, $\tau(x) = \tau_1(x) \wedge \ldots \wedge \tau_k(x)$ for an orthonormal basis $\{\tau_1(x), \ldots, \tau_k(x)\}$ of ${\rm Tan}(E,x)$;
\item [(iii)] $\theta: E \to \R^m$ belongs to $L^1(E,\R^m;\mathcal{H}^k \trace E)$.
\end{itemize}

The $k$-vector field $\tau$ and the function $\theta$ will be called an \emph{orienting $k$-vector} and the \emph{multiplicity vector} of $T$, respectively. A rectifiable current is called \emph{polyhedral} if $E$ is a finite union of $k$-dimensional simplexes with disjoint relative interiors and $\tau,\theta$ are constant on the relative interior of each simplex. The space of polyhedral $k$-currents with coefficients in $\R^m$ is denoted $\mathscr{P}_{k}^{\R^m}(\R^n)$. Notice that this definition is coherent with the content of Section \ref{s:mmtp}: in particular, the class $\mathscr{P}_{1}^{\R^m}(\R^n)$ contains precisely the polyhedral currents with coefficients in $\R^m$ introduced in Subsection \ref{multi_material_fluxes}.

\subsection{Flat norm and flat currents with coefficients in $\R^m$}\label{ss:flat}
The \emph{flat norm} is defined for $T \in \mathscr{P}^{\R^m}_k(\mathbb{R}^n)$ as:
\[
\mathbb{F}(T) :=\inf\{\M(S) + \M(R)\, \colon \, T = R + \partial S\,, R\in\mathscr{P}_k^{\mathbb{R}^m}(\mathbb{R}^n)\,, S\in\mathscr{P}_{k + 1}^{\mathbb{R}^m}(\mathbb{R}^n)\}.
\] 
The space of $k$-dimensional \emph{flat currents} with coefficients in $\mathbb{R}^m$, denoted $\mathscr{F}_k^{\mathbb{R}^m}(\R^n)$, is defined as the completion of the space $\mathscr{P}_k^{\mathbb{R}^m}(\R^n)$ in $\mathscr{D}^{\R^m}_k(\R^n)$ with respect to the flat norm. In particular, $T\in \mathscr{D}^{\R^m}_k(\R^n)$ is a $k$-dimensional flat current with coefficients in $\R^m$ if and only if every component $T_j$ of $T$ is a classical flat current. Moreover, it holds
\begin{equation} \label{comp flat estimates}
\mathbb{F}(T) \le \sum_{j = 1}^m\mathbb{F}(T_j),
\end{equation}
and clearly $\mathbb{F}(T_j) \le \mathbb{F}(T)$ for every $j$. Hence, the convergence in flat norm for currents with coefficients in $\R^m$ is equivalent to the convergence in flat norm of every component.

\begin{remark} \label{comparison}
We remark here that a current with coefficients in $\R^m$ is rectifiable, polyhedral, flat, normal or with finite mass if and only if all of its components are so. The classical theory of real currents, therefore, provides all the tools needed to work with currents with coefficients in $\R^m$. At the same time, flat currents with coefficients in $\R^m$ may be introduced as a particular instance of the theory of flat chains with coefficients in a normed Abelian group $\G$, as pioneered by Fleming \cite{Fleming} and extensively studied in the literature (see e.g. \cite{W_deformation,W_rectifiability,DPH_advanced,DPH_basic}), when $\G = (\R^m,+)$ equipped with the standard Euclidean norm. For the purposes of the present paper, the two approaches may be considered equivalent (see also Section \ref{currents} for more details on this point).
\end{remark}

\subsection{Currents and vector-valued measures}
The following fundamental result holds.
\begin{theorem}
If $K \subset \mathbb{R}^n$ is a compact set, and $r \geq 0$, then the set $$\{T \in \mathscr{N}^{\mathbb{R}^m}_k(\mathbb{R}^n)\,:\, \spt(T) \subset K\,,\, \M(T) + \M(\partial T) \leq r\}$$ is $\mathbb{F}$-compact in $\mathscr{F}^{\mathbb{R}^m}_k(\mathbb{R}^n)$.
\end{theorem}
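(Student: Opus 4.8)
The plan is to reduce the statement to the classical Federer--Fleming compactness theorem for real currents, exploiting the principle emphasized throughout this section: a current with coefficients in $\R^m$ is determined by, and inherits its relevant properties from, the $m$-tuple of its classical components. Since the flat norm makes $\mathscr{F}^{\R^m}_k(\R^n)$ into a (complete) metric space, with distance $\mathbb{F}(S-T)$, it suffices to prove that the displayed set is sequentially compact: every sequence $(T^{(h)})_h$ of currents in $\mathscr{N}^{\R^m}_k(\R^n)$ with $\spt(T^{(h)}) \subset K$ and $\M(T^{(h)}) + \M(\partial T^{(h)}) \leq r$ admits a subsequence converging in flat norm to some $T$ lying in the same set.

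First I would record the componentwise bounds. If $T = (T_1,\dots,T_m)$ satisfies $\M(T) + \M(\partial T) \leq r$ and $\spt(T)\subset K$, then, writing $\|T_j\| = t_j\|T\|$ with $0 \le t_j \le 1$ as in \S\ref{curr_finite_mass}, one has $\M(T_j) \leq \M(T)$; since $\partial T_j = (\partial T)_j$, also $\M(\partial T_j) \leq \M(\partial T)$; and $\spt(T_j) \subset \spt(T) \subset K$. Hence $\M(T_j) + \M(\partial T_j) \le r$, so each component $T^{(h)}_j$ lies in the classical compact set $\{S \in \mathscr{N}_k(\R^n) : \spt(S) \subset K,\ \M(S) + \M(\partial S) \leq r\}$, which by the classical Federer--Fleming compactness theorem (see \cite[4.2.17]{Federer_GMT} or \cite{Simon1983}) is compact in the topology of the classical flat norm. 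I would then extract subsequences iteratively: first a subsequence along which $T^{(h)}_1$ converges in classical flat norm to some $S_1$, then a further subsequence along which $T^{(h)}_2 \to S_2$, and so on; after $m$ extractions, relabeling, $T^{(h)}_j \to S_j$ in flat norm for every $j=1,\dots,m$. Setting $T := (S_1,\dots,S_m)$, the characterization in \S\ref{ss:flat} gives $T \in \mathscr{F}^{\R^m}_k(\R^n)$, and \eqref{comp flat estimates} yields $\mathbb{F}(T^{(h)} - T) \le \sum_{j=1}^m \mathbb{F}(T^{(h)}_j - S_j) \to 0$.

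It remains to check that $T$ belongs to the set, via two stability facts that both pass through the components. For the support: each $S_j = \lim_h T^{(h)}_j$ has $\spt(S_j) \subset K$, as the constraint $\spt(\cdot)\subset K$ (with $K$ closed) is preserved under flat convergence of classical currents; consequently $\spt(T) \subset K$. For the mass bound: the functional $T \mapsto \M(T) + \M(\partial T)$ is lower semicontinuous with respect to flat convergence, because it can be written as
\[
\M(T) + \M(\partial T) = \sup\left\{ T(\omega + d\eta)\ :\ \omega \in \mathscr{D}^k_{\R^m}(\R^n),\ \eta \in \mathscr{D}^{k-1}_{\R^m}(\R^n),\ \|\omega\|_c \leq 1,\ \|\eta\|_c \leq 1 \right\},
\]
a supremum of functionals each of which is continuous along flat-convergent sequences; hence $\M(T) + \M(\partial T) \leq \liminf_h \big( \M(T^{(h)}) + \M(\partial T^{(h)}) \big) \leq r$. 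Thus $T \in \mathscr{N}^{\R^m}_k(\R^n)$, $\spt(T) \subset K$, and $\M(T) + \M(\partial T) \leq r$, as required.

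I do not expect a genuine obstacle here: the whole proof is the componentwise reduction already set up in this section, together with the classical compactness theorem. The one point deserving attention is that one should not invoke a ``group coefficients'' compactness theorem as a black box, since $(\R^m,+)$ is neither compact nor finitely generated, so the clean route really is to argue component by component with real coefficients; a secondary minor point is that one must use the lower semicontinuity of the \emph{joint} functional $\M(\cdot) + \M(\partial\,\cdot)$, rather than of $\M$ and $\M(\partial\,\cdot)$ separately, in order to recover the bound by $r$ (and not merely by $2r$) for the limit current.
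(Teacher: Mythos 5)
Your proof is correct, but it is genuinely different from what the paper does: the paper gives no componentwise argument at all and simply invokes the compactness theorem for flat chains with coefficients in a normed Abelian group, citing \cite[Lemma 7.4]{Fleming}. Your route instead reduces everything to the classical real-coefficient statement via the components $T_j$, using $\M(T_j)\le \M(T)$ (which follows from $\|T_j\|=t_j\|T\|$, $t_j\le 1$), $\partial T_j=(\partial T)_j$, a diagonal extraction, and the equivalence \eqref{comp flat estimates} between flat convergence of $T$ and of its components; this is more self-contained and is very much in the spirit of Remark \ref{comparison}, at the price of redoing stability checks (support, normality, the bound by $r$) that the black-box theorem hands you for free since a compact set is closed. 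Two small corrections to your closing remarks. First, the group-coefficient compactness theorem \emph{is} applicable here as a black box: the hypothesis it needs is not compactness or finite generation of ${\rm G}$ but that closed balls $\{g:\|g\|\le \lambda\}$ be compact, which $(\R^m,+)$ with the Euclidean norm satisfies; so the paper's citation is legitimate. Second, the joint lower semicontinuity of $\M(\cdot)+\M(\partial\,\cdot)$ is not actually needed to avoid the constant $2r$: separate lower semicontinuity gives $\M(T)+\M(\partial T)\le \liminf_h\M(T^{(h)})+\liminf_h\M(\partial T^{(h)})\le \liminf_h\bigl(\M(T^{(h)})+\M(\partial T^{(h)})\bigr)\le r$ by superadditivity of the $\liminf$. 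Your dual representation of the joint functional is nevertheless correct and works fine.
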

For a proof of this theorem in the general case of currents with coefficients in groups see \cite[Lemma 7.4]{Fleming}. 

As we have seen in \S\ref{curr_finite_mass}, currents of finite mass with coefficients in $\R^m$ are identified with finite measures with values in $\Lambda_k(\R^n) \otimes \R^m$. Hence, the previous compactness result and the density with respect to the uniform topology of smooth forms in continuous ones guarantees that the flat convergence and the weak-$^*$ convergence of the associated measures coincide in a class of normal currents with equi-bounded masses and masses of the boundaries. Observe that in Definition \ref{def_cost_functional} no bound on the masses and masses of the boundaries is guaranteed on the sequence $T_h$ of polyhedral currents which are converging to $T$ in the flat norm. Hence, in principle, the functional $\en$ defined in there might not coincide with the lower semi-continuous relaxation made with respect to the {weak-$^*$} convergence of the vector-valued measures and of their distributional divergences. 

\subsection{Energy functional on spaces of currents with coefficients in $\R^m$}

Now that we have developed the terminology, let us rephrase the definition of the functional $\en$ in terms of currents with coefficients in $\R^m$. We will give the definition for currents of arbitrary dimension $k$, although the dimensions $k = 0$ and $k = 1$ are the only relevant in view of the application to the multi-material transport problem.

Let $\cost \colon \R^m \to \R$ be as in Definition \ref{def:mmtcost}. If $P \in \Po_{k}^{\R^m}(\R^n)$ is a polyhedral current of the form
 \[
 P = \sum_{\ell=1}^N (\tau_{\ell} \otimes \theta_{\ell}) \, \Ha^k \trace \sigma_\ell
 \]
for orienting $k$-vectors $\tau_\ell \in \Lambda_{k}(\R^n)$, multiplicities $\theta_\ell \in \R^m$ and non-overlapping $k$-dimensional simplexes $\sigma_\ell$, then we set
\begin{equation} \label{costo su poliedrali Rm}
\en(P) := \sum_{\ell=1}^N \cost(\theta_\ell) \, \Ha^k(\sigma_\ell)\,.
\end{equation}

The functional $\en$ is extended to $\F_k^{\R^m}(\R^n)$ by relaxation: if $T \in \F_{k}^{\R^m}(\R^n)$ then
\begin{equation} \label{costo su flat Rm}
\en(T) := \inf\left\lbrace \liminf_{h \to \infty} \en(P_h) \, \colon \, \{P_h\}_{h} \subset \Po_{k}^{\R^m}(\R^n) \mbox{ with } \flat(T - P_h) \to 0 \right\rbrace\,.
\end{equation}

\subsection{Decomposition of flat currents}

In the coming sections, we will use the following result, the proof of which can be found in \cite[Theorem 4.2]{Silhavy2008} in the case of classical flat currents; the proof in the case of a flat current $T$ with coefficients in $\R^m$ can be easily deduced by applying the classical result to each of the $m$ components of $T$.

\begin{thm}\label{dec}
Let $T\in \mathscr{F}_k^{\R^m}(\R^n)$ be a flat current with finite mass. Then, we can decompose
\[
T = T_{\rec} + T_{\diff}\,,
\]
where:
\begin{enumerate}
\item $T_{\rec} \in \Rc_{k}^{\R^m}(\R^n)$;
\item $\| T_{\diff}\|(A) = 0$ for every $k$-rectifiable set $A \subset \R^n$.
\end{enumerate}
We will call $T_{\rec}$ and $T_{\diff}$ the \emph{rectifiable part} and the \emph{diffuse part} of $T$ respectively. The decomposition is unique.
\end{thm}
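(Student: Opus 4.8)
The strategy is to reduce the statement to the classical decomposition theorem \cite[Theorem 4.2]{Silhavy2008}, applied separately to each of the $m$ components of $T$, and then to re-assemble the pieces using the principle (recorded in Remark \ref{comparison} and in \S\ref{ss:flat}) that a current with coefficients in $\R^m$ is flat, rectifiable, or of finite mass if and only if each of its components is.

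First I would write $T=(T_1,\dots,T_m)$. Since $T$ has finite mass, each component $T_j$ is a classical flat chain of finite mass, so \cite[Theorem 4.2]{Silhavy2008} provides a (unique) decomposition $T_j=(T_j)_{\rec}+(T_j)_{\diff}$ with $(T_j)_{\rec}$ a classical $k$-rectifiable current and $\|(T_j)_{\diff}\|(A)=0$ for every $k$-rectifiable set $A\subset\R^n$. I then define
\[
T_{\rec}:=\big((T_1)_{\rec},\dots,(T_m)_{\rec}\big),\qquad T_{\diff}:=\big((T_1)_{\diff},\dots,(T_m)_{\diff}\big),
\]
so that $T=T_{\rec}+T_{\diff}$ because passing to components is linear. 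By Remark \ref{comparison}, $T_{\rec}\in\Rc_k^{\R^m}(\R^n)$ (all its components are classical rectifiable currents), and $T_{\diff}\in\F_k^{\R^m}(\R^n)$ has finite mass (it is the difference of the finite-mass flat chains $T$ and $T_{\rec}$).

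It remains to transfer the diffuseness to $T_{\diff}$ and to establish uniqueness. For the diffuseness, I use the representation of \S\ref{curr_finite_mass}: writing $T_{\diff}=A_{T_{\diff}}\|T_{\diff}\|$ with $A_{T_{\diff}}=\sum_{j}\tau_j\otimes t_j e_j$ and $\|(T_j)_{\diff}\|=t_j\|T_{\diff}\|$, $0\le t_j\le 1$, and observing that the density $A_{T_{\diff}}$ has unit norm $\|T_{\diff}\|$-a.e. and hence $\sum_j t_j>0$ $\|T_{\diff}\|$-a.e., one gets $\|T_{\diff}\|\ll\sum_{j}\|(T_j)_{\diff}\|$; since the right-hand side vanishes on every $k$-rectifiable set, so does $\|T_{\diff}\|$. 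For uniqueness, suppose $T=S+R$ with $S\in\Rc_k^{\R^m}(\R^n)$ and $\|R\|$ vanishing on $k$-rectifiable sets. Passing to components, $T_j=S_j+R_j$; here $S_j$ is a classical rectifiable current and $\|R_j\|\le\|R\|$ vanishes on $k$-rectifiable sets, so by the uniqueness part of \cite[Theorem 4.2]{Silhavy2008} we have $S_j=(T_j)_{\rec}$ and $R_j=(T_j)_{\diff}$ for every $j$, i.e.\ $S=T_{\rec}$ and $R=T_{\diff}$.

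I do not expect a serious obstacle here: the content is entirely the classical theorem \cite[Theorem 4.2]{Silhavy2008}, and the only step that requires a little care is the absolute continuity $\|T_{\diff}\|\ll\sum_{j}\|(T_j)_{\diff}\|$, which is exactly what is needed to push the diffuseness of the components up to $T_{\diff}$ itself (and, symmetrically, to run the uniqueness argument componentwise).
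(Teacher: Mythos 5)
Your proof is correct and follows exactly the route the paper indicates: the paper does not spell out a proof but states that the result "can be easily deduced by applying the classical result to each of the $m$ components of $T$," which is precisely your componentwise reduction to \cite[Theorem 4.2]{Silhavy2008}. The two details you supply --- the absolute continuity $\| T_{\diff}\| \le \sum_j \|(T_j)_{\diff}\|$ (which also follows directly from $\|S\|\le\sum_j\|S_j\|$ for any finite-mass $S$) and the componentwise uniqueness argument --- are exactly the ones needed to make that reduction rigorous.
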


\subsection{Representation of the cost of rectifiable currents}

The multi-material transportation energy $\E$ is defined via \eqref{costo su flat Rm} on all flat currents with coefficients in $\R^m$, and the finiteness of $\E(T)$ does not imply, in general, any further information on the geometry of $T$. There is, however, a simple condition which is both necessary and sufficient for all flat currents with finite mass and finite energy to be \emph{rectifiable}. For a multi-material cost $\cost$ as in Definition \ref{def:mmtcost}, we will define the directional derivatives
\[
\frac{\partial^+\cost}{\partial e_j}(0) := \lim_{t \to 0^+} \frac{\cost(t e_j)}{t}\,.
\]
The latter exist (possibly infinite) due to Lemma \ref{bigteo}(1) below.

\begin{prop} \label{p:rectifiability}
Let $\cost$ be as in Definition \ref{def:mmtcost}. The condition
\begin{equation} \label{e:infinite_derivative}
\frac{\partial^+\cost}{\partial e_j}(0) = + \infty \qquad \mbox{for every $j=1,\ldots,m$}
\end{equation}
holds if and only if every $T \in \mathscr{F}_1^{\R^m}(\R^n)$ with $\mass (T) + \E(T) < \infty$ is rectifiable.
\end{prop}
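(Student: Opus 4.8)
The plan is to prove the two implications separately, relying in an essential way on the decomposition Theorem \ref{dec}. For the implication ``\eqref{e:infinite_derivative} $\Rightarrow$ rectifiability'', suppose $\cost$ satisfies the infinite-derivative condition and let $T \in \mathscr{F}_1^{\R^m}(\R^n)$ have finite mass and finite energy. Write $T = T_{\rec} + T_{\diff}$ as in Theorem \ref{dec}; the goal is to show $T_{\diff} = 0$. The key point is that the relaxed energy $\E$ is, at least on the diffuse part, bounded below by a multiple of the mass of each component: I claim that $\E(T) \geq c\, \M(T_{\diff})$ for a suitable constant, which since $\E(T)<\infty$ would force $\M(T_{\diff})=0$. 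To see this, it is convenient to first reduce to a \emph{scalar} statement by working one component at a time: since $\cost$ is monotone and even, for a polyhedral chain $P = \sum_\ell (\tau_\ell \otimes \theta_\ell)\Ha^1 \trace \sigma_\ell$ and each fixed $j$ one has $\cost(\theta_\ell) \geq \cost(\theta_{\ell,j} e_j) \geq \psi_j(|\theta_{\ell,j}|)$, where $\psi_j(t) := \cost(t e_j)$ is a nonnegative, even (in the sense $\psi_j(t)=\psi_j(-t)$ on the relevant half-line), lower semicontinuous, subadditive, nondecreasing function on $[0,\infty)$ with $\psi_j(0)=0$ and, by hypothesis, $\psi_j'(0^+) = +\infty$. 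Thus $\E(P) \geq \sum_\ell \psi_j(|\theta_{\ell,j}|)\Ha^1(\sigma_\ell) =: \E_{\psi_j}(P_j)$, the classical scalar branched-transport–type energy of the $j$-th component $P_j$ with cost $\psi_j$. Passing to the relaxation (using that $\flat(T - P_h)\to 0$ implies $\flat(T_j - (P_h)_j)\to 0$ by \eqref{comp flat estimates}), we get $\E(T) \geq \E_{\psi_j}(T_j)$ for every $j$. Now invoke the known scalar result: a single-material cost $\psi$ with $\psi'(0^+)=+\infty$ forces every finite-mass, finite-$\E_\psi$ classical flat chain to be rectifiable, and moreover controls its mass — this is exactly the content of the ``$\alpha<1$'' rigidity in Xia's model and its generalizations (see \cite{Xia2003,Brancolini2017,Colombo2017}). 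Hence each component $T_j$ is rectifiable, and therefore $T$ is rectifiable by Remark \ref{comparison}.

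For the converse, I argue by contraposition: suppose \eqref{e:infinite_derivative} fails, so there is an index $j_0$ with $\psi_{j_0}'(0^+) = \lim_{t\to 0^+}\cost(te_{j_0})/t =: L < \infty$. The plan is to exhibit a single non-rectifiable flat chain $T \in \mathscr{F}_1^{\R^m}(\R^n)$ with finite mass and finite energy. The natural candidate is a purely diffuse $1$-chain supported on (the direction $e_{j_0}$ times) a segment: take $T = (e_1 \otimes e_{j_0})\,\Leb^1 \trace I$ for a bounded interval $I$ on the $x_1$-axis — more precisely, view $\Leb^1\trace I$ as a smooth (constant-density) $1$-current times the vector $e_{j_0}$ in the coefficient group. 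This $T$ has finite mass $\M(T) = \Ha^1(I)$, and it is not rectifiable in the sense that its diffuse part is nonzero (a constant-density current on a segment \emph{is} rectifiable; so this candidate is wrong). The correct candidate instead needs genuine diffusion transverse to a rectifiable set: one should take $T$ to be a $1$-chain whose $j_0$-th component is the ``staircase'' flat chain approximating a vector field — concretely, $T_{j_0}$ a classical $1$-dimensional flat chain which is flat-close to polyhedral chains $P_h$ built from finer and finer gridded approximations, carrying multiplicities that shrink like $1/N_h$ on $\sim N_h$ parallel strands so that $\sum \psi_{j_0}(1/N_h)\cdot(\text{length}) \approx L \cdot (\text{total length})$ stays bounded while the $P_h$ spread out and the limit $T_{j_0}$ becomes diffuse (this is the standard construction showing that for $\alpha=1$, i.e. linear cost near $0$, diffuse competitors have finite energy). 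Set all other components of $T$ to zero. Then $\M(T) < \infty$ by the length bound, $\E(T) \leq \liminf_h \E(P_h) < \infty$ by construction and the definition \eqref{costo su flat Rm}, and $T$ is not rectifiable since $T_{j_0}$ has a nontrivial diffuse part by Theorem \ref{dec}.

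The main obstacle, and the step deserving the most care, is the converse construction: producing a diffuse $1$-flat chain with finite $\psi_{j_0}$-energy when $\psi_{j_0}'(0^+)<\infty$. One has to verify (i) that the approximating polyhedral chains $P_h$ really flat-converge to a limit $T$ which is diffuse — controlling both $\flat(P_h - P_{h+1})$ (to get a Cauchy sequence, hence a limit in $\mathscr{F}_1^{\R^m}$) and $\|T_{\diff}\|\ne 0$ (to get non-rectifiability), and (ii) that $\liminf_h \E(P_h)<\infty$, which requires subadditivity/monotonicity of $\psi_{j_0}$ together with the sublinear-at-worst growth $\psi_{j_0}(t)\le (L+\eps)t$ for small $t$. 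A clean way to organize (i)–(ii) is to take for $T_{j_0}$ a fixed classical normal $1$-current that is \emph{known} to be non-rectifiable and to have finite mass — e.g. a current $\vec v\, \Leb^2 \trace Q$ of dimension $1$ obtained from a suitable divergence-free vector field on a square $Q$, or more simply the ``unit-square foliation'' current — and to quote the explicit computation (carried out in the classical single-material literature under the name ``for $\alpha = 1$ the energy is just a weighted mass, finite on normal currents'') that $\E_{\psi_{j_0}}$ of such an object is finite precisely because the cost is bounded by a linear function near $0$. I would cite \cite{Colombo2017} and \cite{Brancolini2017} for the scalar rigidity used in the forward direction and for the finite-energy diffuse examples used in the converse, reducing the proof here to the componentwise bookkeeping via \eqref{comp flat estimates} and Remark \ref{comparison}.
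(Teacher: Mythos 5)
Your proof is correct and follows essentially the same route as the paper: both directions reduce to the single-material statement of \cite[Proposition 2.8]{Colombo2017a} applied to the scalar costs $t\mapsto\cost(te_j)$ — the forward implication by bounding $\E(T)\geq \E_{\psi_j}(T_j)$ componentwise (the paper does this via Lemma \ref{redu} and Proposition \ref{lem_mon}, you via the pointwise monotonicity inequality on polyhedral chains, which is an equivalent bookkeeping), and the converse by planting the scalar non-rectifiable finite-energy counterexample in the $j_0$-th component. The only cosmetic remark is that your converse contains a retracted false start before landing on the right construction; note also that the linear bound you need there, $\psi_{j_0}(t)\le L\,t$, actually holds for \emph{all} $t>0$ by the sup characterization \eqref{char}, which makes the finiteness of the energy of a diffuse normal competitor immediate.
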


The previous result naturally generalizes the analogous result in the single-material framework (see Proposition 2.8 of \cite{Colombo2017a}). We postpone the proof of Proposition \ref{p:rectifiability} to Section \ref{s:prelim}.

\begin{remark}
Observe that the condition \eqref{e:infinite_derivative} is natural in branched transportation models. For instance, in the case $m=1$ it holds for the standard $\alpha$-mass functional originally studied by Xia \cite{Xia2003}, defined by $\cost (\theta) = |\theta|^\alpha$ for $\alpha \in \left( 0, 1 \right)$.
\end{remark}

In turn, the cost $\E(T)$ of a rectifiable current $T$ can be given an explicit integral representation. More precisely, we can prove the following theorem.

\begin{theorem}[Representation formula for rectifiable currents] \label{t:repr_networks}
Let $T \in \mathscr{F}_1^{\R^m}(\R^n)$ be a flat current with finite mass such that $T = T_{\rec} = (\tau \otimes \theta)\, \Ha^1 \trace E$. Then:
\begin{equation} \label{eq:rect_representation}
\E (T) = \int_{E} \cost(\theta (x)) \, d\Ha^1(x)\,.
\end{equation} 
\end{theorem}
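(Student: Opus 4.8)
By \eqref{costo su flat Rm}, $\E$ is the relaxation, with respect to flat convergence, of the explicit polyhedral functional \eqref{costo su poliedrali Rm}, so \eqref{eq:rect_representation} amounts to the two inequalities $\E(T)\le\int_E\cost(\theta)\,d\Ha^1$ and $\E(T)\ge\int_E\cost(\theta)\,d\Ha^1$; the first is vacuous when the integral is $+\infty$. I will use the localization $\E_U$, for an open set $U\subseteq\R^n$, obtained by relaxing $P=\sum_\ell(\tau_\ell\otimes\theta_\ell)\,\Ha^1\trace\sigma_\ell\mapsto\sum_\ell\cost(\theta_\ell)\,\Ha^1(\sigma_\ell\cap U)$ with respect to flat convergence in $U$: exactly as in the global analysis of Section~\ref{s:prelim}, $\E_U$ is sequentially lower semicontinuous and coincides with the explicit formula on polyhedral chains (here the lower semicontinuity of $\cost$ is used), and it obeys the scaling $\E_{B(x_0,\rho)}(T)=\rho\,\E_{B(0,1)}\big((\eta_{x_0,\rho})_\#T\big)$, where $\eta_{x_0,\rho}(y):=(y-x_0)/\rho$ (the length of a $1$-dimensional simplex scales by $\rho^{-1}$, while its multiplicity, hence $\cost$ of it, is unchanged).

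\textbf{Lower bound.} Let $P_h=\sum_\ell(\tau_\ell^h\otimes\theta_\ell^h)\,\Ha^1\trace\sigma_\ell^h\in\Po_1^{\R^m}(\R^n)$ satisfy $\flat(T-P_h)\to 0$; I must show $\liminf_h\E(P_h)\ge\int_E\cost(\theta)\,d\Ha^1$, and may assume the left-hand side finite, passing to a subsequence with $\E(P_h)\to L:=\liminf_h\E(P_h)$. Consider the positive Radon measures $\mu_h:=\sum_\ell\cost(\theta_\ell^h)\,\Ha^1\trace\sigma_\ell^h$, so $\mu_h(\R^n)=\E(P_h)$; a further subsequence gives $\mu_h\wto\mu$ with $\mu(\R^n)\le L$. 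It suffices to prove $\mu\ge\cost(\theta)\,\Ha^1\trace E$, since then $L\ge\mu(\R^n)\ge\int_E\cost(\theta)\,d\Ha^1$. By the Lebesgue--Besicovitch differentiation theorem and the fact that $\Ha^1(E\cap B(x_0,r))/(2r)\to1$ at $\Ha^1\trace E$-a.e.\ $x_0\in E$, it is enough to show $\liminf_{r\to0}\mu(\overline{B(x_0,r)})/(2r)\ge\cost(\theta(x_0))$ at $\Ha^1\trace E$-a.e.\ $x_0$. Fix such an $x_0$ — with approximate tangent line $\Tan(E,x_0)=\R\tau(x_0)$, $x_0$ a Lebesgue point of $\theta$ and a density point of $\Ha^1\trace E$ — so that, by the standard blow-up behaviour of rectifiable currents (reduce to the classical case via the components; cf.\ Section~\ref{currents}), $(\eta_{x_0,\rho})_\#T\to T_0:=(\tau(x_0)\otimes\theta(x_0))\,\Ha^1\trace(\R\tau(x_0))$ in flat norm on $B(0,1)$ as $\rho\to0$. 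Writing $P_h-T=R_h+\partial S_h$ with $\M(R_h)+\M(S_h)\to0$ and using the coarea inequality $\int_0^\infty\M(\langle S_h,\mathrm{dist}(x_0,\cdot),\rho\rangle)\,d\rho\le\M(S_h)$, one finds for a.e.\ $\rho$ a subsequence along which $\flat_{B(x_0,\rho)}(P_h-T)\to0$; then, for any $\rho<r$,
\[
\mu(\overline{B(x_0,r)})\ \ge\ \limsup_h\mu_h(B(x_0,\rho))\ =\ \limsup_h\E_{B(x_0,\rho)}(P_h)\ \ge\ \E_{B(x_0,\rho)}(T)
\]
(the first step by the portmanteau inequality for weak-$*$ convergence and monotonicity in the radius, the last by lower semicontinuity of $\E_{B(x_0,\rho)}$ along the chosen subsequence). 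Combining with the scaling identity, the lower semicontinuity of $\E_{B(0,1)}$ along $(\eta_{x_0,\rho})_\#T\to T_0$, and $\E_{B(0,1)}(T_0)=\cost(\theta(x_0))\,\Ha^1\big(\R\tau(x_0)\cap B(0,1)\big)=2\,\cost(\theta(x_0))$, one gets $\liminf_{\rho\to0}\E_{B(x_0,\rho)}(T)/(2\rho)\ge\cost(\theta(x_0))$, and hence, choosing $\rho<r$ with $\rho/r\to1$, the required density estimate. As the sequence $(P_h)$ was arbitrary, $\E(T)\ge\int_E\cost(\theta)\,d\Ha^1$.

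\textbf{Upper bound.} Assume $\int_E\cost(\theta)\,d\Ha^1<\infty$. Truncating (replacing $T$ by $T\trace(\{|\theta|\le R\}\cap B(0,R))$ changes $T$ by a current of arbitrarily small mass, hence small flat norm, and does not increase the energy integral), I may assume $\theta$ and $E$ bounded. Since $E$ is $1$-rectifiable, up to an $\Ha^1$-null set it is a countable disjoint union of Borel pieces each contained in a $C^1$ simple arc; keeping only finitely many pieces changes $T$ by a current of small mass and only decreases the integral, so I may assume $E\subseteq\Gamma$ for a single $C^1$ arc $\Gamma$ (the finitely many contributions being treated separately and added). By Lusin's theorem, restricting to a compact subset of small $\Ha^1$-deficit, I may further assume that both $\theta$ and $\cost\circ\theta$ are \emph{continuous} — hence uniformly continuous and bounded — on the compact set $E$. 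Parametrizing $\Gamma$ by arc length via a $1$-Lipschitz injection $\gamma\colon[0,L]\to\R^n$ and setting $A:=\gamma^{-1}(E)$, one has $T=\gamma_\#\widetilde T$ with $\widetilde T:=(\theta\circ\gamma)\,\Ha^1\trace A$ a rectifiable $\R^m$-current in $\R^1$ and $\int_A\cost(\theta\circ\gamma)\,d\Ha^1=\int_E\cost(\theta)\,d\Ha^1$. In $\R^1$, given $\epsilon>0$, cover $\Ha^1$-a.a.\ of $A$ by finitely many disjoint intervals $I_k$ with $|I_k|$ small, $|A\cap I_k|\ge(1-\epsilon)|I_k|$, and oscillation of $\theta\circ\gamma$ and $\cost\circ\theta\circ\gamma$ on $A\cap I_k$ at most $\epsilon$; pick $s_k\in A\cap I_k$ and set $\widetilde P:=\sum_k(\theta\circ\gamma)(s_k)\,\llbracket I_k\rrbracket$. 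Then $\M(\widetilde T-\widetilde P)\le C\epsilon\,\Ha^1(A)$ — no homotopy is needed, as everything lies on the line — and $\sum_k\cost((\theta\circ\gamma)(s_k))\,|I_k|\le(1+C\epsilon)\int_E\cost(\theta)\,d\Ha^1+C\epsilon$. Finally $\gamma_\#\widetilde P$ is a finite sum of $C^1$ arcs carrying constant multiplicities and with \emph{finite} boundary mass, so replacing each arc by a sufficiently fine inscribed polygonal path with the same constant multiplicity produces a polyhedral $P\in\Po_1^{\R^m}(\R^n)$ with $\flat(T-P)\le\flat(\gamma_\#\widetilde T-\gamma_\#\widetilde P)+o(1)\le\M(\widetilde T-\widetilde P)+o(1)$ (using that $\gamma$ is $1$-Lipschitz) and $\E(P)\le\sum_k\cost((\theta\circ\gamma)(s_k))\,|I_k|$ (inscribed polygons are no longer than the arc, and $\E$ agrees with the explicit formula on polyhedral chains). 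Refining the polygons and then letting $\epsilon\to0$ yields a sequence of polyhedral chains converging to $T$ in flat norm with $\limsup$ of the energies at most $\int_E\cost(\theta)\,d\Ha^1$, which is the desired inequality.

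\textbf{Main difficulty and the general setting.} The crux is the upper bound: one must produce polyhedral approximations of $T$ that do not inflate the energy, which precludes crude estimates such as $\flat(A-B)\le\M(A)+\M(B)$ and forces the chain of reductions above. In particular, Lusin's theorem must be applied to $\cost\circ\theta$ and not merely to $\theta$ (to control the \emph{integrand}), and the reduction to $C^1$ arcs is what lets one work with pieces of \emph{finite} boundary mass, sidestepping the fact that a general rectifiable current of finite mass may have $\partial T$ of infinite mass (or not even representable by a measure). The argument is carried out in Section~\ref{s:representation} in the general setting of rectifiable $k$-currents with coefficients in a normed Abelian group $\G$: the scheme is unchanged, with $2r$ replaced by $\omega_k\rho^k$ in the density computations, the blow-up of rectifiable $\G$-chains and the polyhedral approximation of $C^1$ pieces taken from the theory recalled in Section~\ref{currents}, and Theorem~\ref{dec} identifying $T=T_{\rec}$ as exactly the case in which the right-hand side of \eqref{eq:rect_representation} is the relevant quantity.
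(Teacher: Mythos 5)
Your upper bound is essentially sound and takes a genuinely different route from the paper: instead of the tangent-disc approximation at Lebesgue points plus a Vitali--Besicovitch covering (Lemma \ref{lem:flat_approximation} and the Claim in the proof of Theorem \ref{reprect}), you reduce to finitely many $C^1$ arcs, pull back by arc length, and discretize on the line. This is clean for $k=1$; the paper's construction has the advantage of working verbatim for all $k$ and all coefficient groups $\G$.

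The lower bound, however, has a genuine gap. Your blow-up argument reduces $\E(T)\ge\int_E\cost(\theta)\,d\Ha^1$ to the single evaluation $\E_{B(0,1)}(T_0)\ge 2\,\cost(\theta(x_0))$ for the blown-up segment $T_0$ with constant multiplicity, i.e.\ to the statement that the relaxed energy coincides with the explicit formula \eqref{costo su poliedrali Rm} on polyhedral chains. You assert this ``as in Section \ref{s:prelim}'', but Section \ref{s:prelim} does not prove it: in the paper it is a special case of the very theorem being proved (cf.\ the remark that \eqref{eq:rect_representation} reduces to \eqref{costo su poliedrali Rm} when $T$ is polyhedral), and it is exactly the hard core of the lower bound. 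The danger is cancellation: polyhedral chains can flat-converge to a segment of multiplicity $\theta$ while carrying multiplicities that add up to $\theta$ only after cancellations, and one must rule out that this makes $\sum_\ell\cost(\theta^h_\ell)\,\Ha^1(\sigma^h_\ell)$ drop below $\cost(\theta)$ times the length. The paper's mechanism for this is the $k=0$ case of Proposition \ref{prop:semi-continuity}: in a small ball around a point of the limit $0$-chain, the augmentation homomorphism (Proposition \ref{augmentami_ti_prego}) bounds the norm of the difference between $g(x_i)$ and the sum of the approximating coefficients by the localized flat norm, after which subadditivity and lower semicontinuity of $\cost$ give $\cost(g(x_i))\le(1-\e)^{-1}\sum_x\cost(g_h(x))$; the case $k\ge 1$ then follows by slicing and the integral-geometric identity \eqref{e:int_geom}. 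Nothing in your proposal substitutes for this step, so as written the lower bound is circular (or incomplete). A secondary under-justified point: the flat-norm convergence of the rescalings $(\eta_{x_0,\rho})_{\#}T$ to $T_0$ at a.e.\ point, normalized by mass, is not ``standard'' for non-constant real (or $\R^m$-valued) multiplicities; it is precisely the content of Lemma \ref{lem:flat_approximation} and requires the argument given there (Lebesgue points of $\theta$ along a $C^1$ decomposition plus the homotopy formula).
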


Notice that \eqref{eq:rect_representation} reduces to \eqref{costo su poliedrali Rm} if $T$ is polyhedral. Theorem \ref{t:repr_networks} holds in fact in the much more general framework of arbitrary $k$-dimensional rectifiable chains with coefficients in a normed Abelian group ${\rm G}$ (see Section \ref{currents}), and we will prove it in that setting in Section \ref{s:representation}; see Theorem \ref{reprect}. The validity of Theorem \ref{reprect} was first stated by B. White in \cite{W_deformation}, but the proof was only sketched. A self-contained, complete proof of the result when $\G = \R$ was then proposed in \cite{Colombo2017a}, and has motivated further research on the topic of functionals on flat chains defined by relaxation (see e.g. the recent paper \cite{ChFeMe}). When we combine Proposition \ref{p:rectifiability} and Theorem \ref{t:repr_networks}, we have the following corollary.

\begin{cor} \label{cor:cost_char}
Let $\cost$ be as in Definition \ref{def:mmtcost}. The condition 
\begin{equation} \label{infinite slopes}
\frac{\partial^+\cost}{\partial e_j}(0) = + \infty \qquad \mbox{for every $j=1,\ldots,m$}
\end{equation}
holds if and only if 
\begin{equation}
\E (T) = 
\begin{cases}
\int_{E} \cost (\theta(x)) \, d\Ha^1(x) & \mbox{if $T = (\tau \otimes \theta)\, \Ha^1 \trace E \, \in \mathscr{R}_1^{\R^m}(\R^n)$}\,,\\
+ \infty &\mbox{if $T \in \mathscr{F}_1^{\R^m}(\R^n) \cap \{\mass (T) < \infty\} \setminus \mathscr{R}_1^{\R^m}(\R^n)$}\,.
\end{cases}
\end{equation}
\end{cor} 

\medskip

We remark that, in the single-material case, namely when $m=1$, Brancolini and Wirth have obtained, in \cite[Proposition 2.32]{Brancolini2017}, an explicit representation for the energy $\E (T)$ in all cases when \eqref{infinite slopes} fails, even when $T$ has a non-zero diffuse part $T_{\diff}$. It would be interesting to investigate the possibility to extend the results of \cite{Brancolini2017} to the multi-material case.

\section{Preliminaries to the existence theory} \label{s:prelim}

In this section we collect the main preliminary technical results towards the proofs of the main theorems of this work.

\subsection{Properties of the multi-material transportation cost}

We begin our program with the following lemma, which contains a detailed analysis of the properties of multi-material transportation costs.

\begin{lem}\label{bigteo}
Let $\mathcal{C} \colon \R^m \to \R$ be a multi-material transportation cost as in Definition \ref{def:mmtcost}. Then: 
\begin{enumerate}
\item the right-derivative of $\cost$ in the direction $v$ at $0$, defined as $\lim_{t \to 0^+}\frac{\cost(tv)}{t}$, and denoted with $\frac{\partial^+\cost}{\partial v}(0)$, exists (possibly infinite) for every $v \in \R^m$. Moreover, \begin{equation}\label{char}
 \frac{\partial^+\cost}{\partial v}(0) = \sup_{t>0}\frac{\mathcal{C}(tv)}{t} \qquad \forall \, v \in \R^m\,.
\end{equation}
\item the function $v \mapsto \frac{\partial^+\cost}{\partial v}(0)$ is positively $1$-homogeneous, even, strictly positive on $\R^m \setminus \{0\}$, subadditive, and monotone with respect to the partial order $\preceq$ on $\R^m$;
\item the set $V:=\{v\in \R^m\,:\, \frac{\partial^+\cost}{\partial v}(0) < +\infty\}$ is a vector subspace of $\mathbb{R}^m$, and a basis for $V$ is given by $\mathcal{B}:=\{e_{j}\,:\, \frac{\partial^+\cost}{\partial e_j}(0)< +\infty\}$;
\item the function $v\in V \mapsto \frac{\partial^+\cost}{\partial v}(0)$ is bounded on $\mathbb{S}^{m - 1}\cap V$, with estimates:
\begin{equation}\label{bound}
\frac{\partial^+\cost}{\partial v}(0)\le \sum_{j \colon e_j \in \mathcal{B}} |v_j|\frac{\partial^+\cost}{\partial e_j}(0) \leq m \frac{\partial^+\cost}{\partial v}(0)\, \qquad \forall \, v = \sum_{j \colon e_j \in \mathcal{B}} v_j e_j \in \Sf^{m-1} \cap V\,;
\end{equation}
\item $v\mapsto\frac{\partial^+\cost}{\partial v}(0)$ is continuous on $V$;
\item for every $\delta >0$ there exists a constant $c = c(\delta)>0$ such that 
\begin{equation} \label{norm-cost-ratio-bound}
|v| \leq c \, \cost(v) \qquad\forall\, v\in \overline{B(0,\delta)}\,.
\end{equation}
\end{enumerate}
\end{lem}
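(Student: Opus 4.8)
The strategy is to extract each of the six assertions from the four structural properties of $\cost$ in Definition \ref{def:mmtcost}; the only points that are not pure bookkeeping are a one–variable Fekete-type argument for (1) and a scaling estimate near the origin for (6). For (1), I fix $v\in\R^m$ and study $g(t):=\cost(tv)$ on $[0,\infty)$: then $g(0)=0$ by (i), $g$ is subadditive by (iii) (since $g(s+t)=\cost(sv+tv)\le\cost(sv)+\cost(tv)$), and $g$ is non-decreasing by (iv) because $t_1v\preceq t_2v$ whenever $0\le t_1\le t_2$. Setting $L:=\sup_{t>0}g(t)/t\in(0,+\infty]$ (positive by (i)), one has trivially $\limsup_{t\to0^+}g(t)/t\le L$; for the reverse, given $s>0$ and $0<t\le s$ I write $s=nt+r$ with $n=\lfloor s/t\rfloor\ge1$ and $0\le r<t$, so that $g(s)\le n\,g(t)+g(r)\le(n+1)\,g(t)$ by subadditivity and monotonicity, whence $g(t)/t\ge g(s)/\big((n+1)t\big)\ge g(s)/(s+t)\to g(s)/s$ as $t\to0^+$; taking the supremum over $s$ gives $\liminf_{t\to0^+}g(t)/t\ge L$, which proves the existence of the limit and the identity \eqref{char}.

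Write $\phi(v):=\frac{\partial^+\cost}{\partial v}(0)$. For (2), positive $1$-homogeneity and evenness of $\phi$ are immediate from the definition of the limit; $\phi(v)=\sup_{t>0}\cost(tv)/t>0$ for $v\ne0$ by \eqref{char}; and subadditivity and $\preceq$-monotonicity of $\phi$ follow by passing to the limit as $t\to0^+$ in the corresponding inequalities for $\cost(t\,\cdot\,)$, using that $u\preceq v$ implies $tu\preceq tv$ for $t>0$. For (3), $V=\{\phi<+\infty\}$ is a subspace (closed under scaling by homogeneity and evenness, under sums by the subadditivity of $\phi$ just established); the set $\mathcal{B}$ is linearly independent and contained in $V$, and if $v=\sum_j v_je_j\in V$ then $v_je_j\preceq v$ for each $j$, so $|v_j|\,\phi(e_j)=\phi(v_je_j)\le\phi(v)<\infty$, which forces $v_j=0$ whenever $e_j\notin\mathcal{B}$; hence $\mathcal{B}$ is a basis of $V$.

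For (4), if $v=\sum_{j\colon e_j\in\mathcal{B}}v_je_j\in\Sf^{m-1}\cap V$, subadditivity and homogeneity of $\phi$ give the first inequality of \eqref{bound}, while $v_je_j\preceq v$ together with $\preceq$-monotonicity gives $|v_j|\,\phi(e_j)\le\phi(v)$ for each $j$, and summing over the at most $m$ indices with $e_j\in\mathcal{B}$ yields the second; boundedness of $\phi$ on $\Sf^{m-1}\cap V$ then follows since $|v_j|\le1$ and $M:=\sum_{e_j\in\mathcal{B}}\phi(e_j)<\infty$. For (5), by homogeneity the first inequality of \eqref{bound} upgrades to $\phi\le M|\cdot|$ on all of $V$, and combined with $|\phi(u)-\phi(v)|\le\phi(u-v)$ (a consequence of subadditivity and evenness of $\phi$) this shows $\phi$ is $M$-Lipschitz, hence continuous, on $V$.

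Finally (6), which carries the actual content. A naive compactness argument in the direction variable fails here because $\cost$ is only lower semicontinuous, so the rate at which $\cost(tw)/t$ approaches $\phi(w)$ as $t\to0^+$ may depend on $w$; instead I use a scaling estimate. Iterating (iii) gives $\cost(nv)\le n\cost(v)$ for $n\in\N$, and (iv) gives $\cost(\theta v)\le\cost(v)$ for $\theta\in[0,1]$ (because $\theta v\preceq v$); writing $\lambda=\lfloor\lambda\rfloor+(\lambda-\lfloor\lambda\rfloor)$, one deduces $\cost(\lambda v)\le(\lfloor\lambda\rfloor+1)\,\cost(v)\le2\lambda\,\cost(v)$ for every $\lambda\ge1$ and every $v\in\R^m$. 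Applying this with $\lambda=1/|v|$ for $0<|v|\le1$ gives $\cost(v)\ge\tfrac12|v|\,\cost(v/|v|)\ge\tfrac12\kappa_1\,|v|$, where $\kappa_1:=\min_{\Sf^{m-1}}\cost>0$ (the minimum is attained by lower semicontinuity of $\cost$ on the compact set $\Sf^{m-1}$ and is positive by (i)); this proves \eqref{norm-cost-ratio-bound} for $\delta\le1$. For $\delta>1$, $\cost$ also attains a positive minimum $\kappa_\delta$ on the compact annulus $\{1\le|v|\le\delta\}$, so $|v|\le(\delta/\kappa_\delta)\,\cost(v)$ there, and $c(\delta):=\max\{2/\kappa_1,\ \delta/\kappa_\delta\}$ works. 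The main obstacle is exactly this uniform-in-direction lower bound near the origin, which is resolved by the scaling inequality $\cost(\lambda v)\le2\lambda\,\cost(v)$ for $\lambda\ge1$ rather than by a compactness argument on $\Sf^{m-1}$.
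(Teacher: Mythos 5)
Your proof is correct, and in two places it takes a genuinely different route from the paper's. Items (2)--(4) coincide with the paper's argument (subadditivity plus homogeneity for the upper bound, the relation $v_je_j\preceq v$ with monotonicity for the lower bound and for ruling out components along directions with infinite derivative), and your item (5) is a streamlined version of the same idea: the paper proves upper and lower semicontinuity separately from the bound on $\Sf^{m-1}\cap V$, whereas you package both into the Lipschitz estimate $|\phi(u)-\phi(v)|\le\phi(u-v)\le M|u-v|$, which is cleaner but equivalent in substance. The real divergences are in (1) and (6). For (1) the paper simply invokes a theorem of Kuczma on subadditive functions of one variable (after checking its hypothesis via the monotonicity of $t\mapsto\cost(tv)$), while your Fekete-type computation $g(s)\le(n+1)g(t)$ with $s=nt+r$ proves existence of the limit and the identity with the supremum from scratch; this makes the lemma self-contained at no real cost. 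For (6) the paper argues coordinate-wise near the origin -- using that each $\frac{\partial^+\cost}{\partial e_j}(0)>0$ to get $|v_j|\le\alpha\,\cost(v_je_j)\le\alpha\,\cost(v)$ on a small ball -- and then handles the annulus $\overline{B(0,\delta)}\setminus B(0,\e)$ by upper semicontinuity of $|v|/\cost(v)$; your scaling inequality $\cost(\lambda v)\le 2\lambda\,\cost(v)$ for $\lambda\ge1$ (obtained by iterating subadditivity and using $\theta v\preceq v$ for $\theta\in[0,1]$) combined with the lower semicontinuous minimum of $\cost$ on $\Sf^{m-1}$ gives the uniform-in-direction bound near the origin directly, with the explicit constant $c(\delta)=\max\{2/\kappa_1,\delta/\kappa_\delta\}$. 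Both arguments are valid; yours trades the coordinate decomposition for a one-line radial estimate and is arguably more transparent about where each hypothesis on $\cost$ is used.
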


\begin{proof}
The proof of $(1)$ can be found in \cite[Theorem 16.3.3]{Kuczma2009}. In order to apply that theorem, we only need to show that, for every $v \in \R^m$, it is either $\lim_{t \to 0^+}\mathcal{C}(tv) = 0$ or $\liminf_{t\to 0^+}\mathcal{C}(tv)>0$. To prove that for a multi-material cost this is always verified, we simply observe that the function $t \mapsto \mathcal{C}(tv)$ is non-decreasing for $t > 0$. Therefore, the limit $\lim_{t\to0^+} \cost(tv)$ exists, and, since $\mathcal{C}$ is a non-negative function, it is either $0$ or positive. 

We now prove $(2)$. The fact that the directional derivatives are positively $1$-homogeneous functions is due to basic properties of the limit. To simplify the notation, we denote $\frac{\partial^+\cost}{\partial v}(0)$ with $f(v)$. $f$ is even because such is $\mathcal{C}$. Its strict positivity is a direct consequence of the strict positivity of $\mathcal{C}$ and \eqref{char}. Subadditivity is checked in the following way. Write
\[
\cost(t(v_1 + v_2)) \le \cost(tv_1) + \cost(tv_2) \qquad \forall t>0\,\,,  \forall\, v_1,v_2 \in \R^m.
\]
Dividing by $t$ and taking the limit we get the desired inequality. Finally, by the monotonicity of $\cost$, we also have
\[
\cost(tv_1)\leq \cost(tv_2) \qquad \forall t>0\,,v_1 \preceq v_2.
\]
Once again passing to the incremental quotients we infer the same inequality for $f$. 

To prove $(3)$ it is sufficient to write every vector $v\in\R^m$ in components as $v = \sum_{j = 1}^m v_je_j$ and observe that
\[
f(v) \le \sum_{j = 1}^mf(v_je_j),
\]
by subadditivity. Observe that, since $f$ is even, $f(\sign(v_j)e_j) = f(e_j),\forall j\in\{1,\dots,m\}$. Therefore, by subadditivity, positive homogeneity and evenness we deduce
\begin{equation}\label{boundp}
f(v) \le \sum_{j = 1}^m|v_j|f(\sign(v_j)e_j) = \sum_{j = 1}^m|v_j|f(e_j) .
\end{equation}
This inequality proves that $\spn(\mathcal{B})\subseteq V$. On the other hand, suppose that $v = \sum_{j = 1}^m v_je_j$ is such that $v_\ell \neq 0$ for some $\ell$ such that $e_\ell\notin\mathcal{B}$. We need to show that $$f(v) = +\infty.$$ This is a consequence of the monotonicity of $f$. Indeed, since $f$ is even, we can always suppose that $v_\ell$ is positive, and write
\begin{equation}\label{boundp2}
f(v_\ell e_\ell)\le f(v).
\end{equation}
By homogeneity we infer that $f(v_\ell e_\ell) = v_\ell f(e_\ell) = +\infty$. We obtain that $V = \spn(\mathcal{B})$ and we conclude the proof of $(3)$. 

Conclusion $(4)$ has already been proved with equations $\eqref{boundp}$ and $\eqref{boundp2}$.  We will now prove $(5)$. We will let $r := \dim V$ and identify $V$ with $\mathbb{R}^r$. Consider a sequence $\{v^h\}_{h \in \Na}$ in $V$ converging to $v\in V$. We prove the continuity of $f$ by proving separately upper and lower semi-continuity. The upper semi-continuity is proved as follows. By subadditivity,
\[
\cost(tv^h)\le \cost(t(v^h - v)) + \cost(tv) \qquad \forall h\in\mathbb{N}\,.
\]
Denote with $w^h := v^h - v$. Then, using equation \eqref{char} and the homogeneity of $f$, we have:
\[
\cost(t(v^h - v)) \leq f\left(\frac{w^h}{|w^h|}\right)t |w^h| \qquad \forall\, t>0\,, \forall\, h\in\mathbb{N}\,.
\]
Since, by $(4)$, $f$ is bounded by a constant $L$ on the sphere of $V$, we obtain:
\[
\sup_{t>0}\frac{\cost(t(v^h - v))}{t} \leq f\left(\frac{w^h}{|w^h|}\right)|w^h|\leq L|w^h| \qquad \forall\, h\in\mathbb{N}\,.
\]
Hence
\[
\frac{\cost(tv^h)}{t}\le L|w^h| + \frac{\cost(tv)}{t} \le L|w^h| + \sup_{t>0}\frac{\cost(tv)}{t} = L|w^h| + f(v) \qquad \forall\, h\in\mathbb{N}\,.
\]
Taking the limit as $t\to 0^+$, we get
\[
f(v^h) \le L|w^h| + f(v) \qquad  \forall\, h\in\mathbb{N}\,,
\]
and finally
\[
\limsup_h f(v^h)\le f(v)\,.
\]
Lower semi-continuity is completely analogous. It suffices to use subadditivity to write, $\forall h\in\mathbb{N}$,
\[
\cost(tv)\le \cost(t(v^h - v)) + \cost(tv^h),
\]
and repeat the proof of the upper semi-continuity. 

We will now prove $(6)$. Since $\frac{\partial^+\cost}{\partial e_j}(0)$ is strictly positive for every $j=1,\dots,m$, there exist $\alpha >0$ and $\e >0$ such that, whenever $v = \sum_{j=1}^m v_j e_j \in \R^m$ satisfies $\abs{v} < \e$, one has:
\[
\abs{v_j} \leq \alpha\, \cost(\abs{v_j} e_j) = \alpha\,  \cost(v_j e_j) \qquad \forall\, j=1,\dots,m\,.
\]
By the monotonicity of the cost, we can write
\[
\abs{v_j} \leq \alpha \, \cost(v) \qquad \forall\, j\in\{1,\dots, m\}\,,
\]
and, by taking the maximum over $j$,
\[
\abs{v}_{\infty} \leq \alpha \, \cost(v) \qquad \forall \, v \in B(0,\e)\,,
\]
which evidently implies
\begin{equation} \label{controllo dall'alto palla piccola}
\abs{v} \leq \sqrt{m}\, \alpha\, \cost(v) \qquad \forall \, v \in B(0,\e)\,.
\end{equation}

On the other hand, since $\cost$ is lower semi-continuous, the function $v\mapsto \frac{|v|}{\mathcal{C}(v)}$ is upper semi-continuous on every set not containing the origin. Therefore, the inequality in \eqref{controllo dall'alto palla piccola} holds, with a constant depending on $\delta$ and $\e$, also in $\overline{B(0,\delta)} \setminus B(0,\e)$. This completes the proof.
\end{proof}

\subsection{Properties of the energy functional on flat currents} 

We turn now our attention to study the properties of the energy functional, regarded as a map
\[
\en \colon \F_{1}^{\R^m}(\R^n) \to \left[0,+\infty\right]\,,
\]
as defined in \eqref{costo su poliedrali Rm}-\eqref{costo su flat Rm} when $k=1$.

Here and in the sequel, we are going to adopt the following notation. 

\begin{notazioni}[Lift of a component] \label{defnot:lift}
If $T \in \mathscr{D}_k^{\R^m}(\R^n)$ is a current with coefficients in $\R^m$, and $T_j$ is the $j$\textsuperscript{th} component of $T$, we let $\hat{T}_j$ denote the current in $\mathscr{D}_k^{\R^m}(\R^n)$ having the $j$\textsuperscript{th} component equal to $T_j$ and all other components set equal to zero. We will call $\hat{T}_j$ the \emph{lift} of the component $T_j$ to $\mathscr{D}_k^{\R^m}(\R^n)$. Note that any current with coefficients in $\R^m$ can be decomposed into the sum of the lifts of its components, namely
\begin{equation} \label{decomposition in lifts}
T = \sum_{j=1}^m \hat{T}_j\,, \qquad T \in \mathscr{D}_{k}^{\R^m}(\R^n)\,.
\end{equation}
\end{notazioni}

The following lemma states that, if a flat current $T$ coincides with the lift of one of its components, then one can find a (polyhedral) recovery sequence for its energy enjoying the same property.
\begin{lem}\label{redu}
Let $T \in \F_1^{\R^m}(\R^n)$ be a flat current such that $T = \hat{T}_j$ for some $j \in \{1,\dots,m\}$. Then, there exists a sequence $\{P^h\}_{h \in \mathbb{N}}$ with $P^h \in \mathscr{P}_{1}^{\R^m}(\R^n)$ such that
\begin{equation} \label{coefficienti sottospazio}
P^h = \widehat{P^h}_j \qquad \mbox{for every } h \in \mathbb{N}\,,
\end{equation}
and
\begin{equation} \label{convergenza}
\flat(P^h - T) \to 0\,, \quad \en(P^h) \to \en(T) \qquad \mbox{ as } h \to \infty\,.
\end{equation}
\end{lem}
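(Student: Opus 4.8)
The plan is to obtain the required sequence by \emph{projecting onto the $j$-th component} an arbitrary polyhedral recovery sequence for $\en(T)$, after observing that this projection affects neither the flat convergence nor the limit of the energies. By the definition of $\en$ in \eqref{costo su flat Rm} — together with a routine diagonalization, which applies also when $\en(T)=+\infty$ since polyhedral chains are flat-dense in $\F_1^{\R^m}(\R^n)$ — I can fix a sequence $\{Q^h\}_h \subset \mathscr{P}_1^{\R^m}(\R^n)$ with $\flat(Q^h - T) \to 0$ and $\en(Q^h) \to \en(T)$. For each $h$ I set $P^h := \widehat{Q^h}_j$, the lift of the $j$-th component of $Q^h$ in the sense of Notation \ref{defnot:lift}. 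Writing $Q^h = \sum_\ell (\tau_\ell \otimes \theta_\ell)\,\Ha^1 \trace \sigma_\ell$, one has $P^h = \sum_\ell \big(\tau_\ell \otimes (\theta_\ell)_j\, e_j\big)\,\Ha^1 \trace \sigma_\ell$, which is again a polyhedral current with coefficients in $\R^m$ (see also Remark \ref{comparison}); moreover $P^h = \widehat{P^h}_j$ by construction, so \eqref{coefficienti sottospazio} holds.

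To verify the flat convergence I will use the hypothesis $T = \hat{T}_j$: then every component of $P^h - T$ vanishes except the $j$-th one, which equals $(Q^h)_j - T_j = (Q^h - T)_j$. Hence, by the component estimate \eqref{comp flat estimates} and the bound $\flat(S_j) \le \flat(S)$,
\[
\flat(P^h - T) \;\le\; \flat\big((Q^h - T)_j\big) \;\le\; \flat(Q^h - T) \;\longrightarrow\; 0 \,.
\]

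For the energies I will establish two inequalities. First, since the sequence $\{P^h\}$ is itself admissible in the infimum \eqref{costo su flat Rm} defining $\en(T)$, the lower bound $\liminf_h \en(P^h) \ge \en(T)$ is immediate (this also settles the case $\en(T)=+\infty$). Second, for each simplex $\sigma_\ell$ one has $(\theta_\ell)_j\, e_j \preceq \theta_\ell$, because all coordinates other than the $j$-th vanish and the $j$-th is unchanged; thus the monotonicity property (iv) of Definition \ref{def:mmtcost} gives $\cost\big((\theta_\ell)_j\, e_j\big) \le \cost(\theta_\ell)$, and summing over $\ell$ against the nonnegative weights $\Ha^1(\sigma_\ell)$ yields, via \eqref{costo su poliedrali Rm}, that $\en(P^h) \le \en(Q^h)$. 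Consequently $\limsup_h \en(P^h) \le \limsup_h \en(Q^h) = \en(T)$, and combining this with the previous step gives $\en(P^h) \to \en(T)$. Together with the flat convergence above, this is exactly \eqref{convergenza}.

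I do not expect a genuine obstacle here; the only step deserving attention is the inequality $\en(P^h) \le \en(Q^h)$, which rests entirely on the elementary fact that the coordinate projection $\theta \mapsto (\theta)_j e_j$ is non-decreasing for the partial order $\preceq$, so that assumption (iv) on $\cost$ can be applied edge by edge.
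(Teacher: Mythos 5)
Your proposal is correct and follows essentially the same route as the paper's proof: take a polyhedral recovery sequence for $\en(T)$, project its multiplicities onto ${\rm span}[e_j]$, and use the monotonicity of $\cost$ together with the admissibility of the projected sequence in the relaxation \eqref{costo su flat Rm} to squeeze $\en(P^h)$ between $\en(T)$ and $\en(Q^h)$. Your extra care in justifying the flat convergence via the hypothesis $T=\hat T_j$ and the component estimate \eqref{comp flat estimates} only makes explicit what the paper leaves implicit.
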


\begin{proof}
Let $T^h$ be a recovery sequence for $\en(T)$: namely, assume that $\{T^h\}_h$ is a sequence of polyhedral currents with coefficients in $\R^m$ such that
\[
\flat(T - T^h) \to 0 \quad \mbox{ and } \quad \en(T^h) \to \en(T) \quad \mbox{as } h \to \infty\,.
\]
Recall that the flat norm of a current with coefficients in $\R^m$ is comparable to the sum of the flat norms of its components, cf. \S \ref{ss:flat}. For every $h \in \Na$, simply set $P^h := \widehat{T^h}_j$. In other words, $P^h$ is obtained by projecting the multiplicities of $T^h$ onto the subspace ${\rm span}\left[e_j\right] \subset \R^m$, so that if $T^h = \left( T^h_1,\dots,T^h_j,\dots,T^h_m\right)$ then $P^h = \left( 0, \dots, T^h_j,\dots,0 \right)$. Obviously, \eqref{coefficienti sottospazio} holds by construction, and $\flat(P^h - T) \to 0$ as $h \to \infty$. Finally, the procedure does not increase the energy, by monotonicity of the multi-material transportation cost $\cost$. Hence:
\[
\en(T) \leq \liminf_{h\to\infty} \en(P^h) \leq \limsup_{h\to\infty} \en(P^h) \leq \limsup_{h \to \infty} \en(T^h) = \en(T)\,,
\]
which completes the proof.
\end{proof}

\begin{prop}\label{lem_mon}
Let $\cost$ be a multi-material transport cost. The associated energy $\en$ has the following properties:
\begin{enumerate}
\item If $T \in \F_1^{\R^m}(\R^n)$, then
\[
\en(\hat{T}_j) \leq \en(T) \qquad \mbox{for every } j=1,\dots,m\,.
\]
\item If $T,S\in \mathscr{F}_1^{\R^m}(\R^n)$, then
\[
\en(T + S) \le \en(T) + \en(S).
\]
In particular,
\begin{equation} \label{energia componenti comparabile}
\en(T) \leq \sum_{j=1}^m \en(\hat{T}_j) \leq m\, \en(T)\,.
\end{equation}

\item If $A$ and $B$ are disjoint Borel sets and $T\in \mathscr{F}_1^{\R^m}(\R^n)$ has finite mass, then
\begin{equation} \label{spezzamento}
\en(T \trace (A \cup B)) = \en(T\trace A) + \en(T\trace B).
\end{equation}
In particular, 
\begin{equation} \label{spezzamento_bello}
\en(T) = \en(T_{\rec}) + \en(T_{\diff})\,.
\end{equation}
\end{enumerate}
\end{prop}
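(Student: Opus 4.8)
Both will follow by manipulating recovery sequences together with the monotonicity and subadditivity of $\cost$ (Definition \ref{def:mmtcost}(iii)--(iv)). If $\en(T)=+\infty$ there is nothing to prove, so assume $\en(T)<+\infty$ and fix $\{P^h\}_h\subset\Po_1^{\R^m}(\R^n)$ with $\flat(P^h-T)\to 0$ and $\en(P^h)\to\en(T)$. For (1) I would set $Q^h:=\widehat{P^h}_j$, i.e.\ on each edge replace the multiplicity $\theta$ by $\theta_j e_j$; then $Q^h\in\Po_1^{\R^m}(\R^n)$, and since $\theta_j e_j\preceq\theta$ monotonicity of $\cost$ gives $\en(Q^h)\le\en(P^h)$, while $\flat(Q^h-\hat{T}_j)=\flat\big((P^h-T)_j\big)\le\flat(P^h-T)\to 0$ by \eqref{comp flat estimates} (only one component being nonzero). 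Hence $\en(\hat{T}_j)\le\liminf_h\en(Q^h)\le\en(T)$. For (2) I would take recovery sequences $\{P^h\}_h$ for $\en(T)$ and $\{Q^h\}_h$ for $\en(S)$; after passing to a common non-overlapping refinement of the two underlying graphs, $P^h+Q^h\in\Po_1^{\R^m}(\R^n)$, it satisfies $\flat\big((P^h+Q^h)-(T+S)\big)\le\flat(P^h-T)+\flat(Q^h-S)\to 0$, and, edge by edge, subadditivity of $\cost$ together with the fact that $\en$ is independent of the chosen polyhedral representation yields $\en(P^h+Q^h)\le\en(P^h)+\en(Q^h)$. Thus $\en(T+S)\le\liminf_h\big(\en(P^h)+\en(Q^h)\big)=\en(T)+\en(S)$. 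The estimate \eqref{energia componenti comparabile} is then immediate from $T=\sum_{j=1}^m\hat{T}_j$ (cf.\ \eqref{decomposition in lifts}): iterating subadditivity gives $\en(T)\le\sum_j\en(\hat{T}_j)$, and $\en(\hat{T}_j)\le\en(T)$ for each $j$ by part (1).

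\textbf{Part (3): the easy inequality and the reduction.} Restricting a finite-mass flat chain to a Borel set again gives a finite-mass flat chain, so $T\trace A$, $T\trace B$, $T\trace(A\cup B)$ all lie in $\F_1^{\R^m}(\R^n)$; since $A\cap B=\emptyset$ we have $T\trace(A\cup B)=T\trace A+T\trace B$, so $\en(T\trace(A\cup B))\le\en(T\trace A)+\en(T\trace B)$ is exactly part (2). The substance of \eqref{spezzamento} is the reverse inequality, i.e.\ the \emph{superadditivity} of $A\mapsto\en(T\trace A)$; the plan is to show that this set function is the restriction to Borel sets of a finite Borel measure. Using the decomposition $T=T_{\rec}+T_{\diff}$ of Theorem \ref{dec} --- which commutes with Borel restrictions, as $T_{\rec}\trace A$ is rectifiable while $T_{\diff}\trace A$ still charges no rectifiable set --- together with part (2), I would reduce to identifying $\en(T_{\rec}\trace A)$ and $\en(T_{\diff}\trace A)$. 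For the rectifiable part, Theorem \ref{t:repr_networks} gives $\en(T_{\rec}\trace A)=\int_{E\cap A}\cost(\theta)\,d\Ha^1$ for $T_{\rec}=(\tau\otimes\theta)\,\Ha^1\trace E$, a measure in $A$. For the diffuse part one has to establish $\en(T_{\diff}\trace A)=\int_A f(\vec T_{\diff})\,d\|T_{\diff}\|$, where $f(v):=\tfrac{\partial^+\cost}{\partial v}(0)$, finite exactly on the subspace $V$ of Lemma \ref{bigteo}(3) (so the integral is $+\infty$ as soon as $\vec T_{\diff}\notin V$ on a set of positive $\|T_{\diff}\|$-measure), again a measure in $A$. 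Granting these, together with the matching lower bound $\en(T\trace A)\ge\en(T_{\rec}\trace A)+\en(T_{\diff}\trace A)$, one gets the representation
\[
\en(T\trace A)=\int_{E\cap A}\cost(\theta)\,d\Ha^1+\int_A f(\vec T_{\diff})\,d\|T_{\diff}\|\,,
\]
which is manifestly additive over disjoint Borel sets, yielding \eqref{spezzamento}. Finally \eqref{spezzamento_bello} is the special case $A=E$, $B=\R^n\setminus E$: since $\|T_{\diff}\|(E)=0$, one has $T\trace E=T_{\rec}$ and $T\trace(\R^n\setminus E)=T_{\diff}$.

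\textbf{The main obstacle.} The hard step is the family of \emph{lower} bounds behind the above: (a) $\en(T_{\diff}\trace A)\ge\int_A f(\vec T_{\diff})\,d\|T_{\diff}\|$ for a purely diffuse chain, and (b) $\en(T\trace A)\ge\en(T_{\rec}\trace A)+\en(T_{\diff}\trace A)$ for a mixed one (equivalently, the ``$\ge$'' in \eqref{spezzamento_bello}). I expect these to be proved by a localization/blow-up argument: given a recovery sequence for the chain in question, localize on small balls, rescale, and exploit lower semicontinuity of $\en$ and the behaviour of $\cost$ near $0$ (Lemma \ref{bigteo}) to recognize $\cost(\theta)\,d\Ha^1$ as the blow-up density along the rectifiable set and $f(\vec T_{\diff})\,d\|T_{\diff}\|$ as the blow-up density on the diffuse part; the mutual singularity of $\Ha^1\trace E$ and $\|T_{\diff}\|$ is what forces the two contributions to add rather than interfere. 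The genuine difficulty is that restriction to a Borel set is \emph{not} continuous for flat convergence, so a recovery sequence for $T\trace(A\cup B)$ cannot simply be cut into recovery sequences for $T\trace A$ and $T\trace B$ --- the blow-up is precisely the device that bypasses this. (If $\cost$ satisfies \eqref{e:infinite_derivative}, then $V=\{0\}$, $f\equiv+\infty$ off the origin, the diffuse contribution is $0$ or $+\infty$, and (a)--(b) simplify considerably; this is the link with Proposition \ref{p:rectifiability}.)
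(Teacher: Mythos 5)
Your arguments for parts (1) and (2) are correct and coincide with the paper's: part (1) is exactly the projection-of-multiplicities argument of Lemma \ref{redu} combined with monotonicity of $\cost$, and part (2) is subadditivity of $\cost$ on polyhedral chains extended to flat chains by approximation, with \eqref{energia componenti comparabile} following from \eqref{decomposition in lifts} and part (1).

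Part (3), however, is not a proof but a programme, and the programme contains a genuine gap exactly where you flag it. The inequality $\en(T\trace(A\cup B))\le\en(T\trace A)+\en(T\trace B)$ is indeed just part (2); but the reverse (superadditive) inequality is reduced by you to two lower bounds --- $\en(T_{\diff}\trace A)\ge\int_A f(\vec T_{\diff})\,d\|T_{\diff}\|$ and $\en(T\trace A)\ge\en(T_{\rec}\trace A)+\en(T_{\diff}\trace A)$ --- which you explicitly leave unproved, offering only a heuristic blow-up strategy. These are precisely the hard content of the statement, and the difficulty you yourself identify (restriction is not flat-continuous, so a recovery sequence for $T\trace(A\cup B)$ cannot be split) is not resolved by the sketch. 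Moreover, the intermediate representation formula you invoke for the diffuse part is a nontrivial multi-material assertion that is nowhere established in the paper: the only version available is the single-component formula \eqref{explicit_components}, obtained in the proof of Theorem \ref{t:existence} by reduction to \cite[Proposition 2.32]{Brancolini2017}, and proving it for a general $\R^m$-valued diffuse chain is comparable in difficulty to \eqref{spezzamento} itself. The paper avoids all of this: \eqref{spezzamento} is quoted directly from White \cite[6.1(3)]{W_deformation}, where the additivity of $A\mapsto\en(T\trace A)$ over disjoint Borel sets is proved for arbitrary finite-mass flat chains with group coefficients by a slicing argument, with no need for any integral representation of the energy; \eqref{spezzamento_bello} then follows, as in your last step, by taking $A=E$ and $B=E^c$ for the $1$-rectifiable set $E$ carrying $T_{\rec}$. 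To make your route rigorous you would either have to carry out the blow-up lower bounds in full (essentially reproving a multi-material analogue of \cite[Proposition 2.32]{Brancolini2017} together with the splitting of rectifiable and diffuse contributions), or replace this whole block by the citation to White.
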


\begin{proof}

The proof of $(1)$ is analogous to that of Lemma \ref{redu}, by means of projecting the multiplicities of a polyhedral recovery sequence onto the direction ${\rm span}[e_j]$. Concerning $(2)$, observe that the subadditivity of $\en$ holds on polyhedral currents simply by the subadditivity of the cost $\cost$. The result naturally extends to flat currents by approximation, and thus \eqref{energia componenti comparabile} is an immediate consequence of \eqref{decomposition in lifts} and $(1)$. Finally, the proof of \eqref{spezzamento} can be found in \cite[6.1(3)]{W_deformation}. If we apply Theorem \ref{dec} to decompose
\[
T = T_{\rec} + T_{\diff}\,,
\]
then evidently the measures $\|T_{\rec}\|$ and $\|T_{\diff}\|$ are mutually singular, and in fact $T_{\rec} = T \trace E$ and $T_{\diff} = T \trace E^c$ for some $1$-rectifiable set $E \subset \R^n$. Hence, \eqref{spezzamento_bello} readily follows from \eqref{spezzamento}.
\end{proof}

\subsection{Rectifiability of currents with finite energy: proof of Proposition \ref{p:rectifiability}}

The technical lemmas of the previous paragraphs are sufficient to show the validity of Proposition \ref{p:rectifiability}. Notice that, using the terminology of Lemma \ref{bigteo}, the condition in \eqref{e:infinite_derivative} can be rephrased as $\mathcal{B} = \emptyset$, or equivalently $V = \{0\}$. The proof will be obtained by working in components as a corollary of the corresponding statement in the case $m=1$, which can be found in \cite[Proposition 2.8]{Colombo2017a}.

\begin{proof}[Proof of Proposition \ref{p:rectifiability}]
We first claim that the condition in \eqref{e:infinite_derivative} is necessary for every flat current with finite mass and energy to be rectifiable. Indeed, suppose that \eqref{e:infinite_derivative} does not hold, and assume without loss of generality that $\frac{\partial^+\cost}{\partial e_1}(0) < \infty$. Define a sequence $\{P^h\}_h$ of polyhedral currents of the form $P^h = \left( P^h_1, 0, \ldots,0 \right)$, where $P^h_1$ are as in the corresponding counterexample in \cite[Proof of Proposition 2.8]{Colombo2017a}. As $h \uparrow \infty$, $P^h$ converges to $T = \left( T_1,0,\ldots,0 \right) \in \mathscr{F}_1^{\R^m}(\R^n) \cap \{\mass(T) <\infty\} \setminus \mathscr{R}_1^{\R^m}(\R^n)$ and $\E(T) \leq \liminf_{h\to\infty} \E_1(P^h_1) < \infty$, where $\E_1$ is the single-material transportation energy induced by $\cost_1(\theta) := \cost (\theta, 0, \ldots, 0)$.\\

Conversely, we show that if \eqref{e:infinite_derivative} holds then every $T$ with $\mass (T) + \E (T) < \infty $ is rectifiable. To this aim, it suffices to prove that every component $T_j$ is rectifiable. Fix $j$, and let $\hat T_j$ be the lift of the component $T_j$. By Proposition \ref{lem_mon}, $\mass (\hat T_j) + \E(\hat T_j) < \infty$. By Lemma \ref{redu}, there exists a recovery sequence for $\E(\hat T_j)$ of the form $P^h = \left( 0, \ldots, P^h_j,\ldots, 0\right)$, so that $\E(P^h) = \E_j(P^h_j)$, $\E_j$ being the single-material cost functional induced by $\cost_j(t) := \cost (t e_j)$ for $t \in \R$. Since $\flat(P^h_j - T_j) \to 0$, we have that $\E_j(T_j) \leq \liminf_{h\to\infty}  \E(P^h) = \E(\hat T_j) < \infty$. Since $\cost_j$ satisfies $\lim_{t\to 0^+} \frac{\cost_j (t)}{t} = +\infty$, we conclude from \cite[Proposition 2.8]{Colombo2017a} that $T_j$ is rectifiable.
\end{proof}

\subsection{Monotonicity of the energy}

In Proposition \ref{lem_mon} $(1)$ we have concluded that any \emph{component} of $T$ (or, better said, the lift of any component of $T$) has less energy than $T$. Our next goal is to obtain an analogous result when we look at \emph{pieces} of $T$ rather than components. Let us first define what a piece of a current is.

\smallskip
Let $T\in\mathscr{D}_1(\R^n)$ be a classical current with finite mass. We say that $T'\in\mathscr{D}_1(\R^n)$ is a \emph{piece} of $T$ if 
$$\mass(T)=\mass(T')+\mass(T-T').$$
By \cite[Section 4.1.7]{Federer_GMT} one can see that if $T$ is represented by integration as $T=\tau\mu$, with $\tau$ a Borel vector field in $\R^n$ and $\mu \in \mathscr{M}_{+}(\R^n)$, then a piece $T'$ of $T$ can be written as $T'=\lambda\tau\mu$, for a Borel function $\lambda:\R^n\to[0,1]$.
For $T,T'\in\mathscr{F}_1^{\R^m}(\R^n)$ we say that $T'$ is a piece of $T$ if every component $T'_j$ of $T$ is a piece of the corresponding component $T_j$ of $T$, for $j=1,\dots,m$.

The following is the anticipated monotonicity result for pieces of a flat current with coefficients in $\R^m$, which will play a fundamental role in our existence theory.
\begin{prop} \label{prop:monotonicity}
Let $T\in \mathscr{F}_1^{\R^m}(\R^n)$ have finite mass, and let $T'$ be a piece of $T$. Then $\mathbb{E}(T')\leq\en(T)$.
\end{prop}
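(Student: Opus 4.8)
The plan is to split $T$ into its rectifiable and diffuse parts via Theorem~\ref{dec}, dispose of the rectifiable part using the representation formula of Theorem~\ref{t:repr_networks} together with the pointwise monotonicity of $\cost$, and treat the diffuse part — which is the real obstacle — by a separate argument.

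We may assume $\en(T)<\infty$. Since $T$ has finite mass, Theorem~\ref{dec} provides a $1$-rectifiable set $E\subset\R^n$ with $T_{\rec}=T\trace E$ and $T_{\diff}=T\trace E^c$. Writing each component of the piece $T'$ as $T'_j=\lambda_j\tau_j\|T_j\|$ with $\lambda_j\colon\R^n\to[0,1]$ Borel ($\tau_j$ being the orienting vectorfield of $T_j$), one checks that $T'\trace E$ is rectifiable, while $T'\trace E^c$ has total variation dominated by that of $T_{\diff}$ and hence charges no rectifiable set; by uniqueness of the decomposition, $(T')_{\rec}=T'\trace E$, $(T')_{\diff}=T'\trace E^c$, and each is a piece (with the same densities $\lambda_j$) of $T_{\rec}$, resp. $T_{\diff}$. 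By Proposition~\ref{lem_mon}(3),
\[
\en(T)=\en(T_{\rec})+\en(T_{\diff}),\qquad \en(T')=\en\big((T')_{\rec}\big)+\en\big((T')_{\diff}\big),
\]
so it suffices to prove $\en((T')_{\rec})\le\en(T_{\rec})$ and $\en((T')_{\diff})\le\en(T_{\diff})$.

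For the rectifiable part, orient $E$ by $\tau$ and write $T_{\rec}=(\tau\otimes\theta)\,\Ha^1\trace E$. From $T'_j=\lambda_j\tau_j\|T_j\|$ one gets, on $E$, that the $j$-th component of $(T')_{\rec}$ equals $\lambda_j\theta_j\,\tau\,\Ha^1\trace E$, so $(T')_{\rec}=(\tau\otimes\theta')\,\Ha^1\trace E$ with $\theta'_j=\lambda_j\theta_j$. Since $\lambda_j\in[0,1]$ we have $\sign(\theta'_j)\sign(\theta_j)\ge0$ and $|\theta'_j|\le|\theta_j|$ for all $j$, i.e.\ $\theta'(x)\preceq\theta(x)$ for $\Ha^1$-a.e.\ $x\in E$. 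By the representation formula (Theorem~\ref{t:repr_networks}) and the monotonicity of $\cost$ (Definition~\ref{def:mmtcost}(iv)),
\[
\en\big((T')_{\rec}\big)=\int_E\cost(\theta'(x))\,d\Ha^1(x)\le\int_E\cost(\theta(x))\,d\Ha^1(x)=\en(T_{\rec}).
\]

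The remaining inequality $\en((T')_{\diff})\le\en(T_{\diff})$ is the crux. If $\en(T_{\diff})=\infty$ there is nothing to prove; in particular, when $\frac{\partial^+\cost}{\partial e_j}(0)=\infty$ for every $j$ (i.e.\ $\mathcal B=\emptyset$ in the notation of Lemma~\ref{bigteo}), Proposition~\ref{p:rectifiability} forces $T_{\diff}=0$. The difficulty in general is that off $E$ the components of $T$ need not share a tangent direction, so there is no pointwise $\R^m$-valued multiplicity to which the argument of the previous paragraph applies, and splitting $\en$ over components via \eqref{energia componenti comparabile} loses a factor $m$. I would resolve this by showing that on purely diffuse chains $\en$ coincides with (or is comparable to) the mass-type functional obtained by integrating $v\mapsto\frac{\partial^+\cost}{\partial v}(0)$ against the total variation — using $\cost(v)\le\frac{\partial^+\cost}{\partial v}(0)\le\sum_j|v_j|\,\frac{\partial^+\cost}{\partial e_j}(0)$ for $v\in\spn(\mathcal B)$ (Lemma~\ref{bigteo}(3)--(4)) and approximating $T_{\diff}$ by polyhedral chains whose multiplicities lie in $\spn(\mathcal B)$ — and then invoking the monotonicity of $v\mapsto\frac{\partial^+\cost}{\partial v}(0)$ with respect to $\preceq$ (Lemma~\ref{bigteo}(2)) together with $\|(T'_j)_{\diff}\|=\lambda_j\,\|(T_j)_{\diff}\|\le\|(T_j)_{\diff}\|$. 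An alternative, which avoids the rectifiable/diffuse splitting altogether, is to start from a polyhedral recovery sequence $P^h$ for $\en(T)$ and build polyhedral pieces $Q^h$ of $P^h$ with $\flat(Q^h-T')\to0$: a piece of a polyhedral chain is approximable by polyhedral chains of no larger energy (subdivide the edges and approximate the $[0,1]$-valued densities by step functions, exactly as in Lemma~\ref{redu}, using the monotonicity of $\cost$), whence $\en(T')\le\liminf_h\en(Q^h)\le\liminf_h\en(P^h)=\en(T)$; the delicate point there, and the reason I would favour the first route, is the measurable construction of the $Q^h$ compatibly with flat convergence. In either case, combining with the additivity over $T_{\rec}$, $T_{\diff}$ yields $\en(T')=\en((T')_{\rec})+\en((T')_{\diff})\le\en(T_{\rec})+\en(T_{\diff})=\en(T)$, as desired.
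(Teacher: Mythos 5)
Your treatment of the rectifiable part is correct and self-contained (Theorem \ref{t:repr_networks} does not depend on Proposition \ref{prop:monotonicity}, so there is no circularity, and the pointwise relation $\theta'=(\lambda_j\theta_j)_j\preceq\theta$ together with monotonicity of $\cost$ does the job). But the proof as a whole has a genuine gap: the inequality $\en\big((T')_{\diff}\big)\le\en(T_{\diff})$, which you yourself identify as the crux, is never established. Your first route rests on an unproven representation of $\en$ on purely diffuse chains as the integral of $v\mapsto\frac{\partial^+\cost}{\partial v}(0)$ against the total variation; besides not being proved anywhere (the paper only has the single-component formula \eqref{explicit_components} via \cite[Proposition 2.32]{Brancolini2017}), it is not even clear what the argument $v$ should be, since — as you note — off $E$ the components $\tau_j\mu_j$ of $T_{\diff}$ need not share a direction, so there is no pointwise $\R^m$-valued multiplicity; the density $A_T=\sum_j\tau_j\otimes t_je_j$ is in general not rank one. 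A statement that $\en$ is merely \emph{comparable} to such a functional would in any case not yield the exact monotonicity you need. Your second route is the one the paper actually takes, and the ``delicate point'' you defer is precisely the entire content of the paper's proof.

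Concretely, the paper proceeds without any rectifiable/diffuse splitting: it discretizes the densities $\lambda_j$ into level sets $D_\ell$ indexed by $\ell\in\{0,\dots,k\}^m$, replaces them by compact sets $K_\ell$ carrying most of the mass, thickens these to mutually disjoint open neighborhoods $A^\rho_\ell$, and defines the candidate polyhedral pieces $P'^h$ by rescaling a recovery sequence $P^h$ on each $A^\rho_\ell$ by the factors $\ell_j/k$ (so that $\en(P'^h)\le\en(P^h)$ by monotonicity of $\cost$ on polyhedral chains). The step you cannot skip is showing $\flat(P'^h-T')$ is small: restriction to a Borel set is not continuous in the flat norm, so one must write $T_j-P^h_j=R^h+\partial S^h$ with small masses and use the slicing/coarea inequality for the normal currents $S^h$ to select radii $\rho$ for which $\mass(\langle S^h,{\rm dist}(\cdot,K_\ell),\rho\rangle)$ is controlled, whence $\flat\big((T_j-P^h_j)\trace A^\rho_\ell\big)$ is small for infinitely many $h$. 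Without this (or an equivalent) argument the claim $\flat(Q^h-T')\to0$ is unsupported, and the proof is incomplete.
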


The result in Proposition \ref{lem_mon} $(1)$ stems from the analogous result valid in the case of polyhedral currents, which in turn is an immediate consequence of the monotonicity properties of the function $\cost$. The proof of Proposition \ref{prop:monotonicity} will also follow from a polyhedral approximation argument; the difficulty here lies precisely in the construction of a suitable polyhedral approximation. The \emph{slicing theory} for classical normal currents is one of the ingredients that we are going to need for the proof. We recall here that if $S \in \mathscr{N}_k(\R^n)$, $f \colon \R^n \to \R$ is a Lipschitz function, and $r$ is a real number, then $\langle S, f, r \rangle$ denotes the \emph{slice} of the current $S$ via the map $f$ at level $r$. Intuitively, $\langle S, f, r \rangle$ may be thought of as the $(k-1)$-dimensional current obtained by ``intersecting'' $S$ with the level set $\{f(x)=r\}$: this interpretation is actually entirely correct (modulo specifying the orientation of the resulting object) when $S$ is the current associated with a smooth $k$-surface in $\R^n$.  

\begin{proof}[Proof of Proposition \ref{prop:monotonicity}]

Following the discussion in \S \ref{curr_finite_mass}, write $T=(\tau_1\mu,\dots,\tau_m\mu)$, with $\mu \in \mathscr{M}_{+}(\R^n)$ and $\tau_1,\dots,\tau_m$ Borel vector fields in $\R^n$ satisfying $|\tau_j| \leq 1$ $\mu$-almost everywhere for every $j$. Let $\lambda_1,\dots,\lambda_m:\R^n\to[0,1]$ be Borel functions such that $T'_j=\lambda_j\tau_j\mu$, for $j=1,\dots,m$. Fix $\varepsilon>0$ and $k\in\N$. For every $\ell=(\ell_1,\dots,\ell_m)\in\{0,\dots,k\}^m$, denote 
$$D_\ell:=\left\{x\in\R^n\,:\,\lambda_j(x)\in\left[\frac{\ell_j}{k},\frac{\ell_j+1}{k}\right),\, j=1,\dots,m\right\}\,.$$
For every $\ell$, we also let $K_\ell\subset D_\ell$ be a compact set such that 
\begin{equation}\label{cicciacompact}
\mu(D_\ell\setminus K_\ell)<\frac{\varepsilon}{(k+1)^m}.
\end{equation} 
Since $\mu(\R^n\setminus\bigcup_\ell D_\ell)=0$, it follows from \eqref{cicciacompact} that 
\begin{equation}\label{ciccia2}
\mu\left(\R^n\setminus \bigcup_\ell K_\ell\right)<\varepsilon.
\end{equation}

Observe that, since the $D_\ell$'s are finitely many disjoint sets, the mutual distances between the $K_\ell$'s are bounded from below by a number $2\rho_0\leq 1$. Therefore, for every $\rho<\rho_0$ the open sets 
$$A^\rho_\ell:=\{x\in\R^n\,:\,{\rm dist}(x,K_\ell)<\rho\}$$ 
are mutually disjoint.
For every $j=1,\dots,m$, for every $\rho<\rho_0$, we denote 
$$T''_j:=\sum_{\ell}\frac{\ell_j}{k}\,T_j\trace A^\rho_\ell\,,$$
and we let $T''\in \mathscr{F}_1^{\R^m}(\R^n)$ be the current with components $T''_1,\dots,T''_m$.

It follows from \eqref{ciccia2} and the definition of $D_\ell$ that, for $j=1,\dots, m$, for every $\rho<\rho_0$ we have
\begin{equation}\label{primastima}
\begin{split}
\mass(T''_j-T'_j)&\leq 2\,\mu\left(\R^n\setminus\bigcup_\ell K_\ell\right)+\mass\left((T''_j-T'_j)\trace\bigcup_\ell K_\ell\right)\leq 2\varepsilon + \sum_\ell\mass(T''_j\trace K_\ell-T'_j\trace K_\ell)\\ &\leq 2\varepsilon+\frac{1}{k}\mu(\R^n)\,.
\end{split}
\end{equation}

Now, let us consider a recovery sequence $\{P^h\}_{h\in\N}$ for the energy of $T$, namely $P^h\in\mathscr{P}_1^{\R^m}(\R^n)$ with $\flat(P^h-T)\to 0$ and $\en (P^h)\to \en (T)$. We denote, for $j=1,\dots,m$, for every $h\in\N$, for $\rho<\rho_0$,
$$Q^h_j:=\sum_{\ell}\frac{\ell_j}{k}\,P^h_j\trace A^\rho_\ell\,,$$
and we let $Q^h\in \mathscr{R}_1^{\R^m}(\R^n)$ be the current with components $Q^h_1,\dots,Q^h_m$. Observe that every $Q^h_j$ is supported on a relatively open subset $U^h_j(\rho)$ of the support of $P^h_j$. Therefore, for every $h$ and for $j=1,\dots,m$, $U_j^h(\rho)$ is the union of at most countably many line segments. Hence, there exists a set $E_j^h(\rho)\subset U_j^h(\rho)$, which consists of finitely many line segments, such that $P'^h_j:=Q^h_j\trace E^h_j(\rho)\in \mathscr{P}_1(\R^n)$ and 
\begin{equation}\label{quasipoly}
\mass(Q^h_j\trace (U^h_j(\rho)\setminus E^h_j(\rho)))<\e\,, \quad\mbox{for every $h$, for $j=1,\dots,m$ and for every $\rho<\rho_0$}.
\end{equation}
Finally, for every $h\in\N$, we denote $P'^h\in\mathscr{P}_1^{\R^m}(\R^n)$ the current with components $P'^h_1,\dots,P'^h_m$.
Clearly, $P'^h$ is a piece of $P^h$ for every $h$, and thus, by monotonicity of $\cost$, we have $\en(P'^h)\leq\en(P^h)$ for every $\rho<\rho_0$.

We claim that there exists $\rho<\rho_0$ such that
\begin{equation}\label{e:claim}
\flat(P'^h-T')\leq C(k,\varepsilon)\quad\mbox{for infinitely many $h\in\N$}\,, \tag{CLAIM}
\end{equation}
where $C(k,\varepsilon)$ vanishes in the limit as $k\to\infty$ and $\varepsilon \to 0$. This would imply that $\en(T')\leq \en(T)$ and conclude the proof.

As we observed in \eqref{comp flat estimates} we have $\flat(P'^h-T')\leq\sum_{j=1}^m\flat(P'^h_j-T'_j)$, hence it is sufficient to prove that there exist $\rho<\rho_0$ and an infinite set of indices $H:=\{h_1,h_2,\ldots\}$, such that for each $j=1,\dots,m$, we have

$$\flat(P'^h_j-T'_j)\leq C(k,\varepsilon)\quad\mbox{for every $h\in H$}.$$
For every $h \in \mathbb{N}$, for every $j\in\{1,\dots,m\}$, we have by \eqref{primastima} and \eqref{quasipoly}
\[
\begin{split}
\flat(P'^h_j-T'_j)&\leq\flat(Q^h_j-T''_j)+\mass(Q^h_j-P'^h_j)+\mass(T'_j-T''_j)\leq \flat(Q^h_j-T''_j)+3\varepsilon+\frac{1}{k}\mu(\R^n) \\
&\leq\sum_\ell\frac{\ell_j}{k}\flat((T_j-P^h_j)\trace A^\rho_\ell)+3\varepsilon+\frac{1}{k}\mu(\R^n)\,.
\end{split}
\]
Hence, in order to prove the claim it suffices to show that there exist a positive constant $C$ (independent of $\varepsilon$ and $k$) and a radius $\rho<\rho_0$, such that for every $\ell$ \[\flat((T_j-P^h_j)\trace A^\rho_\ell)<C\frac{\varepsilon}{(k+1)^m} \quad \mbox{for every $j=1,\dots,m$ and for infinitely many $h\in\N$}\,.\]

In order to see this, fix an index $j\in\{1,\dots,m\}$. For every $p =1,2,\dots$ let $h_p \in \mathbb{N}$ (with $h_p > h_{p-1}$ for $p\geq 2$) be such that 
\[
\flat(T_j - P_j^h) < \frac{\e}{(k+1)^m} \frac{\rho_0}{2^{p+2}} \quad \mbox{for every $h \geq h_p$}\,.
\]
Then, by the well-known characterization of the flat norm of classical currents (cf. \cite[Section 4.1.12]{Federer_GMT}), for every $h \geq h_p$ there are currents $R^h \in \mathscr{D}_1(\R^n)$ and $S^h \in \mathscr{D}_2(\R^n)$ such that 
\begin{equation} \label{flat competitors}
T_j - P_j^h = R^h + \partial S^h\,, \qquad \mass(R^h) + \mass(S^h) \leq \frac{\e}{(k+1)^m} \frac{\rho_0}{2^{p+1}}\,.
\end{equation}
Observe that \eqref{flat competitors} implies that $S^h$ is a normal current. Hence, by the classical slicing formula for normal currents (see \cite[Section 4.2.1]{Federer_GMT}) we have for every $\ell\in\{0,\dots,k\}^m$ that
\begin{equation} \label{flat finale}
\begin{split}
(T_j - P_j^h) \trace A^{\rho}_\ell &= R^h \trace A^\rho_\ell + (\partial S^h) \trace A^\rho_\ell \\
&= R^h \trace A^\rho_\ell - \langle S^h, {\rm dist}(\cdot, K_\ell), \rho \rangle + \partial (S^h \trace A^\rho_\ell)
\end{split}
\end{equation}
for a.e. $\rho < \rho_0$. On the other hand, since (see again \cite[Section 4.2.1]{Federer_GMT})
\[
\int_{0}^{\rho_0} \mass(\langle S^h, {\rm dist}(\cdot, K_\ell),\rho\rangle)\, d\rho \leq \mass(S^h \trace A^{\rho_0}_\ell) \leq \frac{\e}{(k+1)^m} \frac{\rho_0}{2^{p+1}}\,,
\]
then there exists a set $I^h \subset \left(0,\rho_0\right)$ of length $|I^h| \leq \frac{\rho_0}{2^{p+1}m}$ such that
\begin{equation} \label{massa slice}
\mass(\langle S^h, {\rm dist}(\cdot, K_\ell), \rho\rangle) \leq \frac{m\e}{(k+1)^m} \quad \mbox{for every $\rho \in \left(0,\rho_0\right)\setminus I^h$, for every $h \geq h_p$}\,.
\end{equation}

Set $H := \{h_p\}_{p\geq 1}$, $I := \bigcup_p I^{h_p}$, and observe that $|I| \leq \frac{\rho_0}{2m}$. Hence, if we choose $\rho \in \left(0,\rho_0\right) \setminus I$ we conclude that \eqref{massa slice} holds true for every $h \in H$. In turn, this allows us to estimate from \eqref{flat finale} that 
\[
\flat((T_j - P_j^h)\trace A^\rho_\ell) \leq \mass(R^h) + \mass(\langle S^h,{\rm d}_{K_\ell},\rho \rangle) + \mass(S^h) \leq \frac{2m\e}{(k+1)^m}
\]
for every $h \in H$, thus completing the proof.
\end{proof}

\section{Proof of the existence Theorem \ref{t:existence}}\label{s:existence}

The proof is by direct methods. Since we know that (by definition) the energy $\E$ is lower semi-continuous with respect to the convergence in flat norm, our goal is to embed the minimization problem introduced in Definition \ref{defn:mmtprob} in a class of 1-currents with coefficients in $\R^m$ which is compact with respect to the topology induced by the flat norm. Let $\{T^h\}_{h\in\N}$ be a sequence of multi-material fluxes between $\mu^-$ and $\mu^+$ which is minimizing the energy $\E$. The sequence $T^{h}$ consists of normal 1-currents with coefficients in $\R^m$ having a common boundary. Moreover, we can assume that the currents $T^{h}$ are all supported on a common compact set (because the push-forward with respect to the closest-point projection from $\R^n$ onto a convex polytope containing the support of $\mu^--\mu^+$ does not increase the energy $\E$, by the subadditivity of the cost). We can also assume that the infimum of the energies $\E(T^h)$ is finite. Nevertheless, the (finite) masses of the $T^{h}$ might in principle be unbounded along the sequence. We will prove that one can perform an operation on each $T^{h}$ (which roughly speaking consists in \emph{removing all its cycles}) which preserves the boundary and does not increase the energy. Moreover the modified currents $T'^{h}$ satisfy
$$\M(T'^{h})\leq C\E(T'^{h})\leq C \E(T^{h})\,.$$
This bound recasts the problem in a compact regime, hence the minimality of each element of the (non-empty) class of subsequential limits of $\{T'^{h}\}_{{h}\in\N}$ is guaranteed by direct methods.

\subsection{Removing cycles}
Let $T\in\mathscr{N}_1(\R^n)$ be a (classical) 1-dimensional normal current. We say that $S\in\mathscr{N}_1(\R^n)$ is a \emph{cycle contained in} $T$ if 
\begin{equation}\label{e:contained_cicle}
\partial S = 0 \qquad \mbox{and} \qquad \Mass(T)=\Mass(T-S)+\Mass(S).
\end{equation}
In other words, a cycle contained in $T$ is any piece of $T$ with zero boundary. We say that $T$ is \emph{acyclic} if there is no non-zero cycle contained in $T$.  
By \cite[Proposition 3.8]{Paolini2012} one can identify the {largest} cycle contained in a normal current $T$, i.e. a cycle $S$ contained in $T$ such that $T':=T-S$ is acyclic. We will call $T'$ the \emph{acyclic part} of $T$. First of all, let us observe that, since $T'$ is a piece of $T$, if $T=\vec T\|T\|$, with unit orientation $\vec T$, then $T'$ can be written as 
\begin{equation}\label{e:rep_acyclic}
T'=\lambda\vec T\|T\|,
\end{equation}
where $\lambda:\R^n\to[0,1]$ is a measurable function. Also note that evidently $\partial T' = \partial T$, since $T - T'$ has zero boundary.

The following lemma contains a crucial observation for the proof of the existence theorem.
\begin{lem}\label{l:bound_rect}
Let $T$ be an acyclic normal 1-current, and let $R=\tau\,\theta\,\Haus^1\trace E\in\mathscr{R}_1(\R^n)$ be its rectifiable part, according to the decomposition Theorem \ref{dec}. Then $|\theta(x)|\leq\frac{1}{2}\Mass(\partial T)$ for $\Haus^1$-a.e. $x\in E$.
\end{lem}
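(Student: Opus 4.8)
The plan is to use the structure theory of acyclic normal $1$-currents, namely Smirnov's decomposition theorem (see \cite{Paolini2012} and the references therein). Since $T$ is an acyclic normal $1$-current, there exist a (Polish) space $\Gamma$ of oriented non-degenerate simple arcs in $\R^n$ and a finite positive measure $\eta$ on $\Gamma$ such that
\[
T=\int_\Gamma \a{\gamma}\,d\eta(\gamma), \qquad \|T\|=\int_\Gamma \Haus^1\trace\gamma\,d\eta(\gamma), \qquad \Mass(\partial T)=\int_\Gamma \Mass(\partial \a{\gamma})\,d\eta(\gamma),
\]
where $\a{\gamma}$ denotes the multiplicity-one integration current along $\gamma$ (so that $\|\a{\gamma}\|=\Haus^1\trace\gamma$), and $\gamma$ is identified with its (compact, connected) image. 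Acyclicity enters precisely here: it is what rules out a ``loop part'' in the decomposition and, equivalently, any cancellation in the boundary, so that the third identity holds as stated (and not merely with ``$\le$''). Since each $\gamma\in\Gamma$ has two distinct endpoints $p_\gamma\neq q_\gamma$, one has $\partial\a{\gamma}=\delta_{q_\gamma}-\delta_{p_\gamma}$ and $\Mass(\partial\a{\gamma})=2$; plugging this into the third identity gives $\Mass(\partial T)=2\,\eta(\Gamma)$, i.e.\ $\eta(\Gamma)=\tfrac12\Mass(\partial T)$.

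Next I would identify the density of $\|R\|$. Recall (from the proof of Proposition \ref{lem_mon}, via Theorem \ref{dec}) that $R=T_{\rec}=T\trace E$ for some $1$-rectifiable set $E\subset\R^n$; since restriction commutes with the integral of curves, this gives
\[
\|R\|=\|T\|\trace E=\int_\Gamma \Haus^1\trace(\gamma\cap E)\,d\eta(\gamma).
\]
For every Borel set $A\subseteq E$ one has $\Haus^1(\gamma\cap A)=\int_A \mathbbm 1_{\{x\in\gamma\}}\,d\Haus^1(x)$, and the incidence set $\{(x,\gamma)\in\R^n\times\Gamma : x\in\gamma\}$ is closed — it is the zero set of the continuous map $(x,\gamma)\mapsto\dist(x,\gamma)$ — hence measurable; Tonelli's theorem then yields
\[
\|R\|(A)=\int_\Gamma\int_A\mathbbm 1_{\{x\in\gamma\}}\,d\Haus^1(x)\,d\eta(\gamma)=\int_A\Theta(x)\,d\Haus^1(x), \qquad \Theta(x):=\eta\big(\{\gamma\in\Gamma : x\in\gamma\}\big).
\]
Comparing this with $\|R\|=|\theta|\,\Haus^1\trace E$, we conclude that $|\theta(x)|=\Theta(x)$ for $\Haus^1$-a.e.\ $x\in E$; since $\Theta(x)\le\eta(\Gamma)=\tfrac12\Mass(\partial T)$ for every $x\in\R^n$, the lemma follows.

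The geometric content is transparent once the decomposition is available: every unit of mass passing through a point $x\in E$ travels along a simple arc joining a source to a sink of $\partial T$, and such an arc meets $x$ at most once, so the flux through $x$ cannot exceed the total throughput $\tfrac12\Mass(\partial T)$. Accordingly, I expect the only real difficulty to be bookkeeping: pinning down the precise form of Smirnov's theorem for acyclic currents (simple arcs \emph{together with} additivity of $\Mass$ along the boundary — exactly the role of acyclicity), and checking the routine measure-theoretic points (the identification $R=T\trace E$ from Theorem \ref{dec}, the commutation of restriction with $\int_\Gamma(\cdot)\,d\eta$, joint measurability of the incidence relation, and the Fubini--Tonelli exchange). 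A slicing-based argument is also conceivable, but it would essentially reconstruct the relevant portion of Smirnov's decomposition.
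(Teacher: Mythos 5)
Your proof is correct and follows essentially the same route as the paper: the paper invokes the Smirnov-type decomposition of the acyclic current into a measure $\pi$ on Lipschitz curves normalized so that $2\,\M(\pi)=\M(\partial T)$ (citing Prop.~3.6(2) of the reference \cite{Colombo2017}), and then runs exactly your Fubini computation to identify $|\theta(x)|$ with the $\pi$-measure of the curves through $x$, which is bounded by $\pi(\Lip)=\tfrac12\M(\partial T)$. Your write-up merely makes explicit the measurability and bookkeeping steps that the paper delegates to the cited reference.
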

\begin{proof}
Without loss of generality, write $T=\tau (|\theta|\Haus^1\trace E+\mu)$, where $\tau$ is unitary and $\mu(E')=0$ for every 1-rectifiable set $E'$. The proof is a small variation of the proof of Prop. 3.6 (2) of \cite{Colombo2017}, and we refer to that paper for the relevant notation. We just recall that one can identify $T$ with a positive measure $\pi$  on the space $\Lip$ of Lipschitz parametrized curves, in the sense that
\[
T (\omega) = \int_{\Lip} \llbracket \gamma \rrbracket (\omega) \,d\pi(\gamma) \qquad \forall\, \omega \in \mathscr{D}^1(\R^n)\,,
\]
where $\llbracket \gamma \rrbracket$ denotes the multiplicity one $1$-current canonically associated with $\gamma \in \Lip$. The measure $\pi$ satisfies the identity $2\M(\pi)=\M(\partial T)$. We can then compute, for every smooth compactly supported test function $\phi:\R^n \to \R$,
\begin{align}
\int_{E} \phi \, |\theta| \,d\Haus^1+\int_{\R^n\setminus E} \phi \,d\mu & =\int_{\R^n} \phi \,d(|\theta| \Haus^1\trace E+\mu)\nonumber\\
& =\int_{\Lip}\left(\int_{E} \phi \mathbbm{1}_{\text{Im}\gamma}\,d\Haus^1+\int_{\R^n\setminus E} \phi \mathbbm{1}_{\text{Im}\gamma}\,d\Haus^1\right) d\pi(\gamma)\label{Life_of_Pi}\\
& =\int_{E}\phi\left(\int_{\Lip} \mathbbm{1}_{\text{Im}\gamma}\,d\pi(\gamma) \right)\,d\Haus^1+\int_{\R^n\setminus E}\phi \,d\nu\,,\nonumber
\end{align}
where $\nu$ is a measure supported on $\R^n\setminus E$. The equality implies that 
$$|\theta(x)|=\int_{\Lip} \mathbbm{1}_{\text{Im}\gamma}(x)\,d\pi (\gamma) \leq \M(\pi) = \frac12 \M(\partial T) \qquad \mbox{for $\Ha^1$-a.e. $x \in E$\,,}$$
which concludes the proof.
\end{proof}

\begin{proof}[Proof of Theorem \ref{t:existence}]
In this proof we will implicitly identify vector-valued measures $T\in\mathscr{M}(\R^n,\R^{n\times m})$ and $\mu\in\mathscr{M}(\R^n,\R^m)$ with 1-dimensional and 0-dimensional currents with coefficients in $\R^m$ and finite mass, respectively, and thus we will write $T = A_T \|T\|$ and $\mu = \vec{\mu} \|\mu\|$ for a $\|T\|$-measurable tensor field $A_T \colon \R^n \to \Lambda_1(\R^n) \otimes \R^m \simeq \R^{n \times m}$ and a $\|\mu\|$-measurable vector field $\vec{\mu} \colon \R^n \to \R^m$ . Similarly, recalling the notation of Remark \ref{interpr}, for $j=1,\dots,m$, a component $T_j$ of $T$ and a component $\mu_j$ of $\mu$ are identified respectively with a (classical) 1-dimensional and 0-dimensional current of finite mass.

Let $T$ be a multi-material flux between $\mu^-$ and $\mu^+$, whose components are $(T_1,\dots,T_m)$. For each component $T_j$, let $T'_j$ denote the acyclic part of $T_j$. By the definition  of acyclic part of a current, $T'_j$ is a piece of $T_j$ for every $j$, and thus the current $T' \in \mathscr{F}_1^{\R^m}(\R^n)$ defined in components by $T' = \left( T'_1,\dots,T'_m \right)$ is a piece of $T$. From Proposition \ref{prop:monotonicity} it follows that 

\begin{equation}\label{e:menoenergia}
\E(T')\leq \E(T)\,.
\end{equation}

Furthermore, $\partial T' = \partial T$, and thus $T'$ is also a multi-material flux between $\mu^-$ and $\mu^+$.

Next, we claim that there exists a constant $C>0$ such that
\begin{equation} \label{energy controls mass}
\mass(T') \leq C \en(T)\,.
\end{equation} 

In order to see this, let us write, recalling Notation \ref{defnot:lift},
\[
T = \sum_{j} \hat{T}_j\,, \qquad T' = \sum_j \widehat{T'}_j\,.
\]

Let us decompose each $T_j$ according to Theorem \ref{dec} into its rectifiable and diffuse part:
\[
T_j = \xi_j\, \theta_j \Ha^1 \trace E_j + \tau_j\,\mu_j\,,
\]
where $E_j \subset \R^n$ is $1$-rectifiable, $|\xi_j(x)|=1$ at $\Ha^1$-a.e. $x\in E_j$, $|\tau_j(x)|=1$ $\mu_j$-almost everywhere, and $\mu_j(E) = 0$ for every $1$-rectifiable subset $E$. By definition of acyclic part of a current, we can represent $T'_j$ as
\[
T'_j = \xi_j\, \theta'_j \Ha^1 \trace E_j + \tau_j \lambda_j\, \mu_j\,,
\] 
for some measurable $\lambda_j \colon \R^n \to \left[0,1\right]$ and for $\theta'_j(x) \leq \theta_j(x)$ at $\Ha^1$-a.e. $x \in E$. Also, correspondingly we have the representation
\[
\hat{T}_j = (\xi_j \otimes \theta_j e_j)\, \Ha^1 \trace E_j + (\tau_j \otimes e_j) \, \mu_j\,, \qquad \widehat{T'}_j = (\xi_j \otimes \theta'_j e_j)\, \Ha^1 \trace E_j + \lambda_j (\tau_j \otimes e_j)\, \mu_j\,.
\]
Then, from Lemma \ref{redu} it immediately follows that
\begin{equation} \label{reduction classical 1}
\en(\widehat{T'}_j) = \inf\left\lbrace \liminf_{h \to \infty} \en(P_h) \, \colon \, \{P_h\}_h \mbox{ sequences in } \mathcal{P}_j \mbox{ with } \flat(\widehat{T'}_j - P_h) \to 0 \right\rbrace\,,
\end{equation}
where $\mathcal{P}_j$ is the class of polyhedral $P \in \mathscr{P}_1^{\R^m}(\R^n)$ of the form
\[
P = \sum_{\ell=1}^N (\tau_\ell \otimes e_j) \theta_\ell \, \Ha^1 \trace \sigma_\ell\,,
\] 
where $\sigma_\ell$ are finite unions of segments with disjoint relative interiors.
In turn, the quantity on the right-hand side of \eqref{reduction classical 1} is equivalent to
\[
\inf\left\lbrace \liminf_{h\to\infty} \en_j(P_h) \, \colon \, \{P_h\}_h \mbox{ sequences in } \mathscr{P}_1(\R^n) \mbox{ with } \flat(T'_j - P_h) \to 0 \right\rbrace\,,
\]
where
\[
\en_j(P) := \sum_{\ell=1}^N \cost(\theta_\ell \, e_j) \, \Ha^1(\sigma_\ell) \qquad \mbox{for } P=\sum_{\ell=1}^N \tau_\ell\,\theta_\ell \Ha^1\trace \sigma_\ell\,.
\]
Hence, by \cite[Proposition 2.32]{Brancolini2017} we can explicitly compute
\begin{equation} \label{explicit_components}
\en(\widehat{T'}_j) = \int_{E_j} \cost(\theta'_j(x) e_j)\, d\Ha^1(x) + \frac{\partial^+\cost}{\partial e_j}(0) \int_{\R^n} \lambda_j(x)\, d\mu_j(x) \,.
\end{equation}

Note that, by the above formula, if $j \in \{1,\dots,m\}$ is such that $\frac{\partial^+\cost}{\partial e_j}(0) = \infty$ (namely, if $e_j \notin \mathcal{B}$, using the notation of Lemma \ref{bigteo}) then $\mu_j = 0$ and $T'_j = (T'_j)_{\rec}$.

In particular,
\begin{equation} \label{stimamassa1}
\mass(\lambda_j (\tau_j \otimes e_j) \mu_j) = \int_{\R^n} \lambda_j(x) \, d\mu_j(x) = \left(\frac{\partial^+\cost}{\partial e_j}(0)\right)^{-1} \en(\lambda_j (\tau_j \otimes e_j) \mu_j)\,,
\end{equation}
with the formula valid also when $e_j \notin \mathcal{B}$ if we use the convention that $\infty^{-1} = 0$.

At the same time, from Lemma \ref{l:bound_rect} we deduce that the ratio $|\theta'_j(x)|/\cost(\theta'_j(x)e_j)$ can be bounded by $\max_{|\theta|\le \mass(\mu^--\mu^+)} |\theta|/\cost(\theta)$ for almost every $x\in E$. Hence
\begin{align}
\mass((\xi_j \otimes \theta'_j e_j)\Haus^1\trace E_j) & =\int_{E_j} |\theta'_j|\,d\Haus^1\nonumber\\
& \leq\left( \max_{\{\theta\in\R^m\,:\,|\theta|\leq\mass(\mu^--\mu^+)\}}\frac{|\theta|}{\cost(\theta)}\right)\E((\xi_j \otimes \theta'_je_j)\Haus^1\trace E_j)\,. \label{stimamassa2}
\end{align}

Combining \eqref{stimamassa1} and \eqref{stimamassa2}, and using \eqref{norm-cost-ratio-bound} together with \eqref{spezzamento_bello}, we get that
$$\mass(\widehat{T'}_j)\leq C \E(\widehat{T'}_j)\,,$$
where the constant $C$ depends only on $\cost$ and the quantity $\mass(\mu^--\mu^+)$. Finally, we conclude from \eqref{energia componenti comparabile} and \eqref{e:menoenergia}:
\[
\mass(T') \leq \sum_{j} \mass(\widehat{T'}_j) \leq C \E(T') \leq C \en(T)\,.
\]

By the discussion at the beginning of this Section, this implies that one can choose a minimizing sequence $\{T'_{h}\}_{h\in\N}$ for $\E$ which is precompact (with respect to the topology induced by the flat norm), hence the multi-material transport problem admits a minimizer.
\end{proof}

\section{Existence of multi-material fluxes with finite energy and stability}\label{s:finite_energy}
The aim of this section is to identify a class of multi-material transportation costs $\cost:\R^m\to\R$ (that we call \emph{admissible}) having the property that for any pair of compatible vector-valued measures $\mu^-,\mu^+ \in\mathscr{M}(\R^n,\R^m)$ there exists a multi-material flux $T\in\mathscr{M}(\R^n,\R^{n\times m})$ between $\mu^-$ and $\mu^+$ with $\E(T)<\infty$. We follow the strategy presented in \cite{Brancolini2017}. We deduce a stability result for the multi-material transport problem associated to admissible multi-material transportation costs.

\begin{defn}\label{def:admiss} A multi-material transportation cost $\cost:\R^m\to\R$ is \emph{admissible} in $\R^n$ if there exists a concave, non-decreasing function $\beta:[0,\infty)\to[0,\infty)$ such that $\cost(x,\ldots,x)\leq \beta(x)$ for every $x\in[0,\infty)$ and moreover
\begin{equation}\label{admiss}
\int_0^1\frac{\beta(x)}{x^{2-\frac 1n}}\,dx<+\infty\,.
\end{equation}
\end{defn}

\begin{remark} \label{rmk:properties of beta}
Observe that the validity of \eqref{admiss} implies that $\beta(0)=\lim_{x \to 0^+} \beta(x)=0$. In turn, $\beta(0)=0$, together with the concavity of $\beta$, readily implies that for every $a \geq 0$
\[
\beta (ab) \leq \beta(a) \, b \qquad \mbox{whenever $b \geq 1$}\,.
\] 
\end{remark}

Given a function $\beta:[0,\infty)\to[0,\infty)$, we define, for all $k\in\N$ and $n=1,2,\dots$
$$S^\beta(n,k):=2^{(n-1)k}\beta(2^{-nk})\quad{\mbox{ and }}\quad S^\beta(n):=\sum_{k=1}^\infty S^\beta(n,k).$$
We have the following lemma (see \cite[Lemma 2.15]{Brancolini2017})
\begin{lem} \label{lem:serie geometrica}
Let $\beta:[0,\infty)\to[0,\infty)$ satisfy \eqref{admiss}. Then $S^\beta(n)<\infty$.
\end{lem}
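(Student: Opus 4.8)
The plan is to prove $S^\beta(n)<\infty$ by a standard integral--series comparison, using that the relevant function $\beta$ is non-decreasing (as it is in Definition~\ref{def:admiss}). The crux is to dominate each term $S^\beta(n,k)=2^{(n-1)k}\beta(2^{-nk})$ by a fixed multiple of the integral of $x\mapsto\beta(x)/x^{2-\frac1n}$ over a suitable dyadic block, and then to observe that these blocks tile $(0,1]$.

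First I would fix an integer $k\ge 0$ and set $I_k:=[2^{-n(k+1)},2^{-nk}]$, so that $|I_k|=2^{-n(k+1)}(2^{n}-1)$. For $x\in I_k$ the monotonicity of $\beta$ gives $\beta(x)\ge\beta\big(2^{-n(k+1)}\big)$, while $x\le 2^{-nk}=2^{n}\,2^{-n(k+1)}$ together with the identity $n(2-\tfrac1n)=2n-1$ gives $x^{2-\frac1n}\le 2^{2n-1}\,(2^{-n(k+1)})^{2-\frac1n}$. Feeding the resulting pointwise lower bound for $\beta(x)/x^{2-\frac1n}$ on $I_k$ into the integral $\int_{I_k}$ and simplifying the powers of $2$ leads to
\[
\int_{I_k}\frac{\beta(x)}{x^{2-\frac1n}}\,dx\ \ge\ \frac{2^{n}-1}{2^{2n-1}}\;2^{(n-1)(k+1)}\,\beta\big(2^{-n(k+1)}\big)\ =\ \frac{2^{n}-1}{2^{2n-1}}\,S^\beta(n,k+1)\,.
\]

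Next I would use that the intervals $\{I_k\}_{k\ge 0}$ have pairwise disjoint interiors and satisfy $\bigcup_{k\ge 0}I_k=(0,1]$. Summing the displayed inequality over $k\ge 0$ and reindexing gives
\[
\int_0^1\frac{\beta(x)}{x^{2-\frac1n}}\,dx\ =\ \sum_{k\ge 0}\int_{I_k}\frac{\beta(x)}{x^{2-\frac1n}}\,dx\ \ge\ \frac{2^{n}-1}{2^{2n-1}}\sum_{j\ge 1}S^\beta(n,j)\ =\ \frac{2^{n}-1}{2^{2n-1}}\,S^\beta(n)\,,
\]
and since the left-hand side is finite by \eqref{admiss}, we conclude $S^\beta(n)\le\frac{2^{2n-1}}{2^{n}-1}\int_0^1\beta(x)\,x^{-(2-\frac1n)}\,dx<\infty$.

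There is no genuine obstacle here; the only points demanding care are the bookkeeping of the exponents of $2$ and making sure the two estimates point in compatible directions --- a \emph{lower} bound on $\beta$ and an \emph{upper} bound on $x^{2-\frac1n}$, so that $\int_{I_k}\beta(x)/x^{2-\frac1n}\,dx$ dominates $S^\beta(n,k+1)$. It is worth emphasizing that monotonicity of $\beta$ is essential: without it the integral in \eqref{admiss} could converge while $S^\beta(n)$ diverges (e.g.\ if $\beta$ had tall thin spikes near the points $2^{-nk}$), but this does not occur here since $\beta$ is the concave, non-decreasing function provided by Definition~\ref{def:admiss}.
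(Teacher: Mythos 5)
Your proof is correct and complete: the dyadic comparison $\int_{I_k}\beta(x)x^{-(2-\frac1n)}\,dx\ge\frac{2^n-1}{2^{2n-1}}S^\beta(n,k+1)$ is verified by a correct exponent count (using $n(2-\tfrac1n)=2n-1$), and summing over the tiling $\bigcup_k I_k=(0,1]$ gives exactly the claim. The paper itself omits the proof, citing \cite[Lemma 2.15]{Brancolini2017}, and the argument there is this same integral--series comparison. You are also right to flag that monotonicity of $\beta$ is genuinely needed (the lemma as literally stated, for an arbitrary $\beta$ satisfying \eqref{admiss}, would be false) and that it is supplied by Definition \ref{def:admiss}, which is the only context in which the lemma is applied.
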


\begin{prop}\label{prop:ex_finite}
Let $\cost:\R^n\to\R$ be an admissible multi-material transport cost. Let $\mu^-,\mu^+\in \mathscr{M}(\R^n,\R^m)$ be a pair of compatible measures with compact support. Then, there exists a multi-material flux $T$ between $\mu^-$ and $\mu^+$ with $\E(T)<\infty$.
\end{prop}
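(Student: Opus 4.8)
The plan is to carry out the dyadic ``irrigation from a point'' construction of \cite{Brancolini2017}, taking advantage of the fact that the admissibility condition controls precisely the cost of moving \emph{equal} amounts of all $m$ commodities along a single edge. Fix a closed cube $Q_0\subset\R^n$, of side length $L$ and centre $x_0$, whose interior contains $\spt(\mu^-)\cup\spt(\mu^+)$, and write $\mu^\pm=(\mu^\pm_1,\dots,\mu^\pm_m)$ with each $\mu^\pm_j\in\mathscr{M}_+(\R^n)$, so that compatibility reads $\mu^-_j(\R^n)=\mu^+_j(\R^n)$ for every $j$. The first reduction is that it suffices to produce, for an \emph{arbitrary} $\mu=(\mu_1,\dots,\mu_m)$ with $\mu_j\in\mathscr{M}_+(\R^n)$ supported in the interior of $Q_0$, a multi-material flux $T^\mu$ with $\di T^\mu=\mu-\mu(Q_0)\delta_{x_0}$ and $\E(T^\mu)<\infty$. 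Granting this, one applies it to $\mu=\mu^-$ and to $\mu=\mu^+$ and sets $T:=T^{\mu^-}-T^{\mu^+}$: then $\di T=\mu^--\mu^+$ (the Dirac masses at $x_0$ cancel, since $\mu^-(Q_0)=\mu^+(Q_0)$ by compatibility), $\E(T)\le\E(T^{\mu^-})+\E(T^{\mu^+})<\infty$ by subadditivity and evenness of $\E$ (Proposition~\ref{lem_mon}(2)), and $T$ is a multi-material flux between $\mu^-$ and $\mu^+$ in the sense of Definition~\ref{def_multi_mat_flux}, being the limit (in mass, hence weak-$^*$) of the discrete multi-material fluxes assembled from the two constructions below, all supported in $Q_0$.

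For $T^\mu$ we proceed as follows. For $k\ge 0$ let $\mathcal{Q}_k$ be the family of the $2^{nk}$ dyadic subcubes of $Q_0$ of side $2^{-k}L$, with centres $x_Q$, and for $k\ge 1$ let $\hat Q\in\mathcal{Q}_{k-1}$ be the dyadic parent of $Q\in\mathcal{Q}_k$, so that $|x_Q-x_{\hat Q}|=\tfrac{\sqrt n}{2}2^{-k}L$. Let $\mu_k:=\sum_{Q\in\mathcal{Q}_k}\mu(Q)\,\delta_{x_Q}$, so that $\mu_0=\mu(Q_0)\delta_{x_0}$ and $\mu_k\wto\mu$, and define the polyhedral $1$-chain with coefficients in $\R^m$
\[
G_k:=\sum_{Q\in\mathcal{Q}_k}(\tau_Q\otimes\mu(Q))\,\Ha^1\trace[x_{\hat Q},x_Q]\,,
\]
with $\tau_Q$ pointing from $x_Q$ to $x_{\hat Q}$; a direct computation gives $\di G_k=\mu_k-\mu_{k-1}$, so $P^N:=\sum_{k=1}^N G_k$ is a discrete multi-material flux between $\mu_N$ and $\mu_0$, supported in $Q_0$. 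The core of the argument is the energy bound for $G_k$. Set $\sigma:=\sum_{j=1}^m\mu_j\in\mathscr{M}_+(\R^n)$; the multiplicity $\mu(Q)$ has non-negative entries each $\le\sigma(Q)$, hence $\mu(Q)\preceq(\sigma(Q),\dots,\sigma(Q))$, and monotonicity of $\cost$ (Definition~\ref{def:mmtcost}(iv)) together with admissibility gives $\cost(\mu(Q))\le\cost(\sigma(Q),\dots,\sigma(Q))\le\beta(\sigma(Q))$. Since \eqref{admiss} forces $\beta(0)=0$ (otherwise $\int_0^1\beta(x)\,x^{-2+1/n}\,dx=+\infty$), the function $\beta$ is concave with $\beta(0)=0$; Jensen's inequality over the $2^{nk}$ cubes of $\mathcal{Q}_k$ and the fact that $t\mapsto\beta(t)/t$ is non-increasing then yield
\[
\sum_{Q\in\mathcal{Q}_k}\cost(\mu(Q))\;\le\;\sum_{Q\in\mathcal{Q}_k}\beta(\sigma(Q))\;\le\;2^{nk}\beta(2^{-nk}\sigma(Q_0))\;\le\;\max\{1,\|\sigma\|\}\,2^{nk}\beta(2^{-nk})\,.
\]
Multiplying by the common edge length $\tfrac{\sqrt n}{2}2^{-k}L$ we obtain $\E(G_k)\le\tfrac{\sqrt n}{2}L\max\{1,\|\sigma\|\}\,2^{(n-1)k}\beta(2^{-nk})=\tfrac{\sqrt n}{2}L\max\{1,\|\sigma\|\}\,S^\beta(n,k)$, whence, by subadditivity of $\E$, $\E(P^N)\le\sum_{k=1}^N\E(G_k)\le\tfrac{\sqrt n}{2}L\max\{1,\|\sigma\|\}\,S^\beta(n)<\infty$ by the preceding lemma. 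An easier analogous estimate gives $\mass(G_k)\le\tfrac{\sqrt n}{2}2^{-k}L\,\|\sigma\|$, summable in $k$, so the $P^N$ converge in mass (hence flat) to a current $T^\mu$ with $\di T^\mu=\lim_N(\mu_N-\mu_0)=\mu-\mu_0$; since the $P^N$ are polyhedral chains converging flat to $T^\mu$, the definition of $\E$ as a relaxation gives $\E(T^\mu)\le\liminf_N\E(P^N)<\infty$, and $T^\mu$ is a multi-material flux between $\mu$ and $\mu_0$ (weak-$^*$ limit of the $P^N$, all supported in $Q_0$, with $\mu_N-\mu_0\wto\mu-\mu_0$).

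Assembling the two halves as in the first paragraph then completes the proof. The step I expect to be the only genuinely non-routine one is the energy estimate for $G_k$: the inequality $\cost(\mu(Q))\le\beta(\sigma(Q))$ — where the $\preceq$-monotonicity of $\cost$ and the specific form of the admissibility assumption enter — followed by the concavity/Jensen bound that reproduces the summable series $\sum_kS^\beta(n,k)$ of the preceding lemma. A minor point to handle carefully is deriving from \eqref{admiss} the elementary facts $\beta(0)=0$ and the monotonicity of $t\mapsto\beta(t)/t$. Everything else — the dyadic tree, the divergence computation, the mass bound, the passage to the limit, the identification of $T^\mu$ as an admissible competitor, and the final assembly $T=T^{\mu^-}-T^{\mu^+}$ — is routine bookkeeping once the construction is in place.
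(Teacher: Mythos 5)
Your construction is essentially the paper's own proof: the same dyadic tree joining cube centres across consecutive scales, the same combination of $\preceq$-monotonicity, the bound $\cost(x,\dots,x)\le\beta(x)$, and concavity of $\beta$ to reproduce the summable series $\sum_k S^\beta(n,k)$, and the same passage to the limit in mass. Two small points. First, the proposition does not assume that the components $\mu^\pm_j$ are positive measures (only that $\mu^-$ and $\mu^+$ are compatible), so you may not write $\mu^\pm_j\in\mathscr{M}_+(\R^n)$; the fix is exactly your own reduction applied instead to $\nu_\pm$, the vector measures whose components are the positive and negative parts of the components of $\nu=\mu^--\mu^+$ (this is what the paper does), after which the Dirac masses at $x_0$ still cancel because compatibility gives $\nu_+(Q_0)=\nu_-(Q_0)$. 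Second, for the identities $\sum_{Q\in\mathcal{Q}_k}\mu(Q)=\mu(Q_0)$ and the telescoping of the divergences you need the dyadic cubes to form a Borel partition, so either take them half-open or translate the grid so that its skeleton is null for all the measures involved (the paper's Lemma \ref{lemmagriglia}). Your one genuine variation --- estimating $\cost(\mu(Q))\le\beta(\sigma(Q))$ with $\sigma=\sum_j\mu_j$ directly, rather than first splitting $\E$ into components via \eqref{energia componenti comparabile} and bounding each $\cost(\theta_j e_j)\le\beta(\theta_j)$ --- is a harmless simplification that trades the paper's factor $m$ for the constant $\max\{1,\|\sigma\|\}$.
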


Before proving Proposition \ref{prop:ex_finite}, we need to introduce the following notation. For $x\in\R^n$ and $d>0$ we denote by $Q_d(x)\subset \R^n$ the cube centered at $x$ with diameter $d$, and faces parallel to the coordinate hyperplanes, henceforth called a \emph{coordinate cube}. Given a coordinate cube $Q$ and a number $k\in\N$, we denote $$\Lambda(Q,k):=\{Q^\ell\}_{\ell=1}^{2^{kn}}$$ the collection of the $2^{kn}$ cubes obtained dividing each edge of $Q$ into $2^k$ subintervals of equal length. We denote by $$S(Q,k):=\bigcup_{\ell=1}^{2^{kn}}\partial Q^\ell$$ the $(n-1)$-skeleton of the grid $\Lambda(Q,k)$.

\begin{lem}\label{lemmagriglia}
Let $Q'\subset\R^n$ be a coordinate cube. Let $\{\mu_h\}_{h\in\N \cup \{\infty\}}\subset \mathscr{M}_+(\R^n)$ be a countable family of positive measures supported on $Q'$. Then there exists a coordinate cube $Q\supset Q'$ such that 
\begin{equation}\label{griglia}
\mu_h(S(Q,k))=0, \qquad \forall\, k\in\N\,, \quad \forall \, h \in \N \cup \{\infty \} .
\end{equation}
\end{lem}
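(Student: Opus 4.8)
The plan is to enlarge $Q'$ to a cube $Q$ of a fixed, strictly larger size and then to \emph{slide} $Q$ so that the hyperplanes produced by all the grids $\Lambda(Q,k)$, $k\in\N$, miss the ``bad levels'' of the measures $\mu_h$. First I would record that, since $Q'$ is compact and each $\mu_h\in\mathscr{M}_+(\R^n)$, every $\mu_h$ is a finite measure; hence for each coordinate direction $i\in\{1,\dots,n\}$ the set
\[
B_i:=\bigcup_{h\in\N}\big\{t\in\R\ :\ \mu_h(\{y\in\R^n:y_i=t\})>0\big\}
\]
is \emph{countable}. Indeed, for fixed $h$ the hyperplanes $\{y\in\R^n:y_i=t\}$, $t\in\R$, are pairwise disjoint, so only countably many of them can carry positive $\mu_h$-mass, and $B_i$ is a countable union over $h$ of such countable sets.

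Write $Q'=Q_{d'}(x')$ and fix once and for all some $d>d'$. Let $a>0$ be the side length common to all cubes $Q_d(x)$, $x\in\R^n$, and observe that there is $\rho>0$ such that $Q_d(x)\supset Q'$ whenever $x$ belongs to the closed cube $I:=\prod_{i=1}^n[x'_i-\rho,x'_i+\rho]$. For any such center $x$, the faces of the subcubes in $\Lambda(Q_d(x),k)$ lie on the hyperplanes $\{y\in\R^n:y_i=x_i-a/2+j\,a\,2^{-k}\}$ with $i\in\{1,\dots,n\}$ and $j\in\{0,\dots,2^k\}$. Since $\mathbb D:=\{\,j\,2^{-k}\ :\ k\in\N,\ 0\le j\le 2^k\,\}$ is precisely the set of dyadic rationals in $[0,1]$, it follows that
\[
\bigcup_{k\in\N}S(Q_d(x),k)\ \subset\ \bigcup_{i=1}^n\ \bigcup_{r\in\mathbb D}\ \big\{y\in\R^n:y_i=x_i-a/2+ra\big\}\,.
\]
Consequently, as soon as we exhibit a center $x\in I$ with $x_i-a/2+ra\notin B_i$ for every $i\in\{1,\dots,n\}$ and every $r\in\mathbb D$, each $\mu_h$ vanishes on the countable union on the right-hand side, hence in particular $\mu_h(S(Q_d(x),k))=0$ for all $h,k\in\N$, which is \eqref{griglia}.

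To build such an $x$ I would argue one coordinate at a time. For a fixed $i$, the set of values of $x_i$ that must be avoided is
\[
F_i:=\bigcup_{r\in\mathbb D}\big(B_i+a/2-ra\big)\,,
\]
a countable union of translates of the countable set $B_i$, hence countable. Since the interval $[x'_i-\rho,x'_i+\rho]$ has positive length, we may pick $x_i\in[x'_i-\rho,x'_i+\rho]\setminus F_i$; carrying this out for every $i$ and setting $Q:=Q_d(x)$ with $x=(x_1,\dots,x_n)$ yields a coordinate cube with $Q\supset Q'$ and satisfying \eqref{griglia}. I do not foresee a genuine obstacle here: the single point worth isolating is that refining the grids $\Lambda(Q,k)$ indefinitely does \emph{not} enlarge the relevant family of hyperplane directions — every hyperplane occurring in some $S(Q,k)$ is of the form $\{y_i=x_i-a/2+ra\}$ for a \emph{dyadic} rational $r\in[0,1]$ — so that, once $d$ (equivalently $a$) is fixed, the set $F_i$ of forbidden centers in the $i$-th coordinate remains countable and leaves ample room inside $I$.
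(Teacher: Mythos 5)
Your proof is correct and follows essentially the same strategy as the paper's: enlarge $Q'$ to a cube of fixed size and translate it coordinate-by-coordinate, using that the grid hyperplanes over all $k$ form a countable family in each direction while a countable family of finite measures can charge only countably many parallel hyperplanes. The only cosmetic difference is in how the good translate is produced — the paper combines the $\mu_h$ into a single measure $\mu=\sum_h 2^{-h}\mu_h$ and invokes the pairwise disjointness of irrationally-shifted copies of the grid, whereas you directly exhibit the forbidden set of centers as a countable set — but these are equivalent formulations of the same counting argument.
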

\begin{proof}
Since the statement concerns only sets with measure zero, we can assume that $\Mass(\mu_h)=1$ for every $h$. Denote $\mu:=\mu_{\infty}+\sum_{h\in\N}2^{-h}\mu_h$. Let $Q''$ be a coordinate cube such that ${\rm dist}(Q',(\R^n\setminus Q''))\geq 1$. We can assume that the edge length of $Q''$ is an integer number. For every $i=1,\dots,n$ and $k\in\N$ we denote 
$H_{i,k}$ the union of $2^k+1$ hyperplanes, orthogonal to $e_i$, intersecting $Q''$ and partitioning it into $2^k$ slabs of equal volume. Denote also 
$$L_i:=\bigcup_{k\in\N}H_{i,k}.$$
Since $L_i+r\, e_i$ is disjoint from $L_i+s\, e_i$ whenever $r-s\in \R\setminus\Q$, then for every $i$ there exists $\rho_i\in[0,1]$ such that 
$$\mu(L_i+\rho_i\,e_i)=0.$$
We conclude that $Q:=Q''+\sum_i\rho_i\,e_i$ contains $Q'$ and yields \eqref{griglia}.\qedhere

\end{proof}

\begin{proof}[Proof of Proposition \ref{prop:ex_finite}]
Let us denote, as in Remark \ref{interpr}, $\nu:=\mu^+-\mu^-$ and $\nu_j$ its components, for $j=1,\dots,m$. We also denote, for every $j$, $(\nu_j)_-$ and $(\nu_j)_+$ respectively the negative and the positive part of $\nu_j$, and finally we let $\nu_-$ and $\nu_+$ be the vector-valued measures whose components are respectively the $(\nu_j)_-$'s and the $(\nu_j)_+$'s. Consider a coordinate cube $Q$ obtained by Lemma \ref{lemmagriglia} applied to the finite family of measures $\{(\nu_j)_\pm\}$.

For every $k\in\N$ we consider the discrete approximation $\sigma^k_\pm$ of  $\nu_\pm$ subject to the grid $\Lambda(Q,k)$, namely $$\sigma^k_\pm:=\sum_{\ell=1}^{2^{kn}}\theta^\pm_\ell\delta_{x^\ell},$$ where $x^\ell$ are the centres of the cubes $Q^\ell\in\Lambda(Q,k)$ and $\theta^\pm_\ell:={\nu_\pm(Q^\ell)}\in\R^m$. The core of the proof is the estimate of the energy for the simplest possible (discrete) multi-material flux between $\sigma^k_\pm$ and $\sigma^{k+1}_\pm$. 

For $\ell=1,\dots,2^{kn}$ we consider the cones $C^\ell_\pm$ over $\sigma^{k+1}_\pm\trace Q^\ell$ with vertex $x^\ell$ (see \eqref{e:cone}). Denoting $$T^k_\pm:=\sum_{\ell=1}^{2^{kn}}C^\ell_\pm,$$
we observe that $${\rm{div}}(T^k_\pm)=\sigma^{k}_\pm-\sigma^{k+1}_\pm,$$
hence $T^k_\pm$ is a discrete multi-material flux between $\sigma^k_\pm$ and $\sigma^{k+1}_\pm$. 

Consider a cube $Q^\ell\in\Lambda(Q,k)$ and the cubes $Q^{\ell, i}\in\Lambda(Q,k+1)$ $(i=1,\dots,2^n)$ generated by $Q^\ell$, namely those cubes in the grid $\Lambda(Q,k+1)$ that are contained in $Q^\ell$. Denoting $x^{\ell,i}$ the centers of the $Q^{\ell, i}$'s, and $\theta^\pm_{\ell,i}$ the corresponding multiplicities, the energy of $T^k_\pm$ is exactly
$$\E(T^k_\pm)=2^{-k-2}{\rm diam}(Q)\sum_{\ell=1}^{2^{kn}}\sum_{i=1}^{2^n}\cost(\theta^\pm_{\ell,i})\,.$$
Similarly, denoting $(T^k_\pm)_j$ the components of $T^k_\pm$, we have
$$ \E(\widehat{(T^k_\pm)}_j)=2^{-k-2}{\rm diam}(Q) \sum_{\ell=1}^{2^{kn}}\sum_{i=1}^{2^n}\cost((\theta^\pm_{\ell,i})_j\,e_j)\,,$$
where $\theta_j$ is the $j$\textsuperscript{th} component of $\theta$.
By the definition of admissible multi-material transport cost, and using that $\cost$ is non-decreasing with respect to the order relation $\preceq$ in $\R^m$, we have, for $j=1,\dots,m$,
\begin{equation}\label{stimafinale}
\E(\widehat{(T^k_\pm)}_j)\leq 2^{-k-2}{\rm diam}(Q)\sum_{\ell=1}^{2^{kn}}\sum_{i=1}^{2^n}\beta((\theta^\pm_{\ell,i})_j)\,.
\end{equation}
Moreover, since 
$$\sum_{\ell=1}^{2^{kn}}\sum_{i=1}^{2^n}(\theta^\pm_{\ell,i})_j=(\nu_j)_\pm(Q),$$ 
then, by concavity of $\beta$, we have
$$\sum_{\ell=1}^{2^{kn}}\sum_{i=1}^{2^n}\beta((\theta^\pm_{\ell,i})_j)=2^{(k+1)n}\sum_{\ell=1}^{2^{kn}}\sum_{i=1}^{2^n}2^{-(k+1)n}\beta((\theta^\pm_{\ell,i})_j)\leq 2^{(k+1)n}\beta(2^{-(k+1)n}(\nu_j)_\pm(Q)).$$
Therefore we deduce from \eqref{stimafinale} that
\begin{align*}
\E(\widehat{(T^k_\pm)}_j) & \leq \frac{1}{2}{\rm diam}(Q)2^{(k+1)(n-1)}\beta(2^{-(k+1)n}(\nu_j)_\pm(Q))\\
& =\frac{1}{2}{\rm diam}(Q)2^{(k+1)(n-1)}\beta(2^{-(k+1)n})\frac{\beta(2^{-(k+1)n}(\nu_j)_\pm(Q))}{\beta(2^{-(k+1)n})}\\
& =\frac{1}{2}{\rm diam}(Q)S^\beta(n,k+1)\frac{\beta(2^{-(k+1)n}(\nu_j)_\pm(Q))}{\beta(2^{-(k+1)n})}\,.
\end{align*}
Let us write $$K(j,k):=\frac{\beta(2^{-(k+1)n}(\nu_j)_\pm(Q))}{\beta(2^{-(k+1)n})}.$$ 
If $(\nu_j)_\pm(Q)\geq 1$, we have $K(j,k)\leq (\nu_j)_\pm(Q)$ for every $k$ by Remark \ref{rmk:properties of beta}; otherwise, by monotonicity of $\beta$, we have $K(j,k)\leq 1$ for every $k$. Summing over $j=1,\dots,m$, we conclude from Proposition \ref{lem_mon} $(2)$ that, for every $N\leq M\in\N$, it holds
\begin{equation}\label{stima_eeeneeergeticaaa}
\E\left(\sum_{k=N}^M T^k_\pm\right)\leq \sum_{j=1}^m\E\left(\sum_{k=N}^M \widehat{(T^k_\pm)}_j\right)\leq\frac{m}{2}{\rm diam}(Q)\left(\sum_{k=N}^MS^\beta(n,k+1)\right)\max\{1,\M(\nu_\pm)\}.
\end{equation}
By Lemma \ref{bigteo} (6), and using that the multiplicities of $T^k_{\pm}$ are bounded by construction, a similar estimate holds for the mass of $\sum_{k=N}^M (T^k_\pm)$. In particular, the sequence $\{S^M_\pm:=\sum_{k=0}^M (T^k_\pm)\}_{M\in\N}$ is Cauchy in mass (notice that $\lim_{k \to \infty} S^\beta (n,k) = 0$ by Lemma \ref{lem:serie geometrica}), and thus it converges in mass to a multi-material flux $S_\pm$. Moreover, $T:=S_+-S_-$ is a multi-material flux between $\nu_-$ and $\nu_+$, or equivalently between $\mu^-$ and $\mu^+$. By \eqref{stima_eeeneeergeticaaa}, we have
\begin{equation} \label{eq:uniform energy bound}
\E(T)\leq m\max\{1,\M(\nu_\pm)\}{\rm diam}(Q)S^\beta(n) < \infty\,,
\end{equation}
and the proof is complete.
\end{proof}
\subsection{Proof of the stability Theorem \ref{t:stability}}\label{ss:stability}
We will actually prove that every subsequential limit $T_\infty$ is a minimizer for the pair $\left( \mu_{\infty}^-, \mu_{\infty}^+ \right)$. Without loss of generality we can assume that $T_h\wto T_\infty$. Furthermore, as a consequence of \eqref{eq:uniform energy bound} and of the lower semicontinuity of the energy, it holds
\begin{equation} \label{eq:limit energy is finite}
\begin{split}
\E(T_\infty) &\leq \liminf_{h \to \infty} \E(T_h) \leq C(m,{\rm diam}(K))\, S^{\beta}(n)\,\left(1 + \sup_{h} \{\M(\mu_h^{\pm})\}\right) \\
&\leq C(m,n,\beta,{\rm diam}(K),\M(\mu^\pm))\,.
\end{split}
\end{equation}

Towards a contradiction, let us assume that $T_\infty$ is not a minimizer. Then, there exist $\delta>0$ and a multi-material flux $S$ between $\mu^-_\infty$ and $\mu^+_\infty$ which satisfies $$\E(S)\leq \E(T_\infty)-7\delta.$$ We will use $S$ to construct a competitor $S_h$ for $T_h$ ($h$ large enough) such that $\E(S_h)<\E(T_h)$, which is a contradiction.
By the lower semi-continuity of $\E$, there exists $h_0\in\N$ such that for $h>h_0$ it holds 
$$\E(T_\infty)\leq \E(T_h)+\delta.$$ Let $Q$ be a cube obtained by Lemma \ref{lemmagriglia}. By \eqref{stima_eeeneeergeticaaa}, there exists $l\in\N$ such that, for every $h\in\N \cup \{\infty\}$
$$\E\left(\sum_{k=l}^\infty (T_h)^k_\pm\right)\leq \delta,$$
where $\sum_{k=l}^\infty (T_h)^k_\pm=:(T_h^l)_\pm$ is a multi-material flux between the discrete approximation $(\sigma_h)_{\pm}^l$ of $\mu_h^\pm$ (subject to the grid $\Lambda(Q,l)$) and the measure $\mu_h^\pm$. Since $\mu_h^\pm\wto\mu_\infty^\pm$, then, for every $\varepsilon>0$, there exists $h_1\geq h_0$ such that the multiplicity of $(\sigma_h)_\pm^l-(\sigma_\infty)_\pm^l$ has norm less than $\varepsilon$ for every $h\geq h_1$. Since $\cost(\theta)\to 0$ as $\theta\to 0$ (see Remark \ref{rmk:properties of beta}), the smallness of the multiplicities of $(\sigma_h)_\pm^l-(\sigma_\infty)_\pm^l$ implies that the cone $C$ over $$(\sigma_h)_+^l-(\sigma_\infty)_+^l-(\sigma_h)_-^l+(\sigma_\infty)_-^l$$
with vertex in the centre of the cube $Q$ satisfies $\E(C)\leq\delta$ for $\varepsilon$ sufficiently small.
The final contradiction is given by the fact that, for $h\geq h_1$, the vector-valued measure
$$S_h:=S+C+(T_h^l)_+-(T_h^l)_--(T_\infty^l)_++(T_\infty^l)_-$$
is a multi-material flux between $\mu_h^-$ and $\mu_h^+$ and by subadditivity of the energy, it holds 
\[
\pushQED{\qed} 
\E(S_h)\leq \E(S)+5\delta\leq\E(T_\infty)-2\delta\leq \E(T_h)-\delta\,.  \qedhere
\popQED
\]   
\begin{remark}[Metrization property of the minimal energy]
Given two compatible measures $\mu^-,\mu^+\in\mathscr{M}(\R^n,\R^m)$, we denote 
$$W(\mu^-,\mu^+):=\min\{\en(T): \mbox{$T$ is a multi-material flux between $\mu^-$ and $\mu^+$}\}\,.$$ 
A simple byproduct of the proof of Theorem \ref{t:stability} is the following: if $T_h$ are minimizers of the multi-material transport problem converging to $T_\infty$, then necessarily $\E(T_h) \to \E(T_\infty)$. In turn, this implies that, if the multi-material transport cost $\cost$ is admissible, then $W$ metrizes the weak-$^*$ convergence; in other words, if a sequence of pairs of compatible measures $\{\mu_h^-,\mu_h^+\}_{h\in\N}$ satisfies $\mu^\pm_h\wto \mu$ for some measure $\mu\in\mathscr{M}(\R^n,\R^m)$, then $W(\mu^-_h,\mu^+_h)\to 0$ as $h\to\infty$. 
\end{remark}

\section{Chains with coefficients in groups}\label{currents}

The following two final sections are devoted to the proof of the representation formula for the energy of a rectifiable multi-material flux stated in Theorem \ref{t:repr_networks}. The result will be here obtained as a particular case of a more general theorem valid in the context of $k$-dimensional chains with coefficients in a normed Abelian group $\G$; see Theorem \ref{reprect}. Such a result is concerned with the representation, on rectifiable $\G$-chains, of a class of functionals defined on flat $G$-chains via relaxation of corresponding energies defined on polyhedral $\G$-chains by integration of cost functions $\cost$ analogous to that considered in Definition \ref{def:mmtcost}. The representation formula for rectifiable multi-material fluxes simply follows by applying Theorem \ref{reprect} with $k=1$ and $\G = \R^m$.

Before proceeding, we are going to collect in this section the fundamental notions concerning $k$-dimensional chains in $\R^n$ with coefficients in a normed group. For a thorough discussion about this topic, we refer the reader to the seminal paper \cite{Fleming}, as well as to the recent contributions \cite{W_deformation,W_rectifiability,DPH_advanced,DPH_basic}.

\subsection{Polyhedral chains with coefficients in a normed group} \label{ssec:catene poliedrali}
Let $\G = \left( \G, + \right)$ denote an Abelian additive group. A \emph{norm} on $\G$ is any function
\[
\| \cdot \| \colon \G \to \R
\]
satisfying the following properties:
\begin{itemize}
\item[$(i)$] $\| g \| \geq 0$ for every $g \in \G$, and $\| g \| = 0$ if and only if $g = 0 \in \G$;

\item[$(ii)$] $\| - g \| = \| g \|$ for every $g \in \G$;

\item[$(iii)$] $\| g + h \| \leq \| g \| + \| h \|$ for every $g,h \in \G$.
\end{itemize}

We will assume that there is a well defined \emph{norm} $\| \cdot \|$ on $\G$ which makes $\G$ a complete metric space with respect to the canonical distance ${\rm d}(g,h) := \| g - h \|$ for $g,h \in \G$.

Let $K$ be a convex compact subset of $\R^{n}$. If $\sigma \subset K$ is a $k$-dimensional \emph{oriented} simplex, then we denote by $\llbracket \sigma \rrbracket$ the classical integral $k$-current canonically associated with $\sigma$. 

A $k$-dimensional \emph{polyhedral chain with coefficients} in $\G$ (or simply a polyhedral $\G$-chain) is a formal \emph{finite} linear combination
\begin{equation} \label{def:polyhedral}
P = \sum_{\ell=1}^{N} g_{\ell} \llbracket \sigma_\ell \rrbracket
\end{equation}
of non-overlapping oriented $k$-simplexes $\sigma_\ell$ with coefficients $g_{\ell} \in \G$. A \emph{refinement} of $P$ is any $k$-dimensional polyhedral $\G$-chain of the form
\[
\sum_{\ell=1}^{N} \sum_{h=1}^{H_{\ell}} g_{\ell}^h\, \llbracket \sigma_\ell^h \rrbracket
\]
where $\sigma_\ell^h\cup\ldots\cup \sigma_\ell^{H_\ell}=\sigma_\ell$ and $g_{\ell}^h = g_\ell$ if $\sigma_{\ell}^h$ has the same orientation of $\sigma_\ell$ or $g_\ell^h = - g_\ell$ otherwise. Two $k$-dimensional polyhedral $\G$-chains are \emph{equivalent} if they have a common refinement.

The set of $k$-dimensional polyhedral $\G$-chains in $K$ can be given the structure of additive group,  denoted with $\Po_{k}^{\G}(K)$, as follows. The sum of two polyhedral $\G$-chains $P_1$ and $P_2$ is obtained by firstly finding refinements of $P_1$ and $P_2$ such that the corresponding simplexes are either non-overlapping or they coincide, and then taking their formal sum with the identification $g_1\, \llbracket \sigma \rrbracket + g_2 \,\llbracket\sigma\rrbracket = (g_1+g_2)\, \llbracket\sigma\rrbracket$. 

Notice that an element $P \in \Po_{0}^{\G}(K)$ is a $\G$-valued discrete measure of the form $P = \sum_{a \in A} g(a) \llbracket a \rrbracket$: here, $A \subset K$ is a \emph{finite} set, $g(a) \in \G$ for every $a \in A$, and the $\G$-valued measure $g(a) \llbracket a \rrbracket$ is defined by
\[
g(a) \llbracket a \rrbracket(E) :=
\begin{cases}
g(a) &\mbox{ if $a \in E$ } \\
0 &\mbox{ otherwise }\,.
\end{cases}
\] 

If $P$ is as in \eqref{def:polyhedral}, then the \emph{mass} of $P$ is defined by
\begin{equation} \label{def:mass}
\mass(P) := \sum_{\ell=1}^{N} \| g_\ell \| \Ha^{k}(\sigma_\ell)\,.
\end{equation}

\subsection{Rectifiable chains with coefficients in a normed group}
More generally, a $k$-dimensional \emph{Lipschitz $\G$-chain} in $K$ has the form  
\begin{equation} \label{def:lipschitz}
\sum_{\ell=1}^{N} g_{\ell} \cdot (\gamma_{\ell})_{\sharp} \llbracket \sigma_\ell \rrbracket \,,
\end{equation}
where each $\sigma_\ell$ is an oriented $k$-simplex in $\R^{k}$, $g_\ell \in \G$, $\gamma_{\ell} \colon \sigma_\ell \to \R^n$ is Lipschitz with $\gamma_\ell(\sigma_\ell) \subset K$, and $\gamma_{\sharp}$ is the \emph{push-forward} operator associated to the Lipschitz map $\gamma$. Analogous considerations to those made in the definition of $k$-dimensional polyhedral $\G$-chains allow to define the group of $k$-dimensional Lipschitz $\G$-chains in $K$, denoted $\mathscr{L}_{k}^{\G}(K)$, and to extend the mass functional to $\mathscr{L}_{k}^{\G}(K)$.

The $\mass$-completion of $\mathscr{L}_{k}^{\G}(K)$ is the group $\Rc_{k}^{\G}(K)$ of $k$-dimensional \emph{rectifiable chains} with coefficients in $\G$. Observe that an element $R \in \Rc_{0}^{\G}(K)$ is a $\G$-valued atomic measure of the form $R = \sum_{a \in A} g(a) \llbracket a \rrbracket$ for some \emph{countable} $A \subset K$, and $g(a) \in \G$ for every $a \in A$ such that $\mass(R) = \sum_{a \in A} \| g(a) \| < \infty$. The mass $\mass(\cdot)$ is a norm on the group $\Rc_{k}^{\G}(K)$.

Let $V \subset \R^{n}$ be a $k$-dimensional vector subspace. A $\G$-valued \emph{orientation} of $V$ is an equivalence class of pairs $(\tau, g)$, where $\tau\in\Lambda_k(\R^n)$ is a unit mass orientation of $V$ (that is, $\tau = \tau_1 \wedge \ldots \wedge \tau_k$ for an orthonormal basis $\{\tau_1,\ldots,\tau_k\}$ of $V$) and $g \in \G$, defined by the equivalence relation
\[
(\tau,g) \equiv (\xi,h) \quad \mbox{if and only if} \quad (\tau = \xi \mbox{ and } g = h) \mbox{ or } (\tau = -\xi \mbox{ and } g = -h)\,.
\]
We introduce the notation $\tau \otimes g$ for the $\equiv$-equivalence class $\left[ (\tau,g) \right]$, since, despite being non-standard, it is coherent with the one used in the previous sections when $k=1$ and $\G = \R^m$.

If $E \subset \R^n$ is (countably) $k$-rectifiable, then a $\G$-valued orientation of $E$ is a $\Ha^{k}$-measurable choice of an orientation $(\tau \otimes g)(x)$ for the ($\Ha^{k}$-a.e. well defined) approximate tangent spaces ${\rm Tan}(E,x)$. It can be seen that if $R \in \Rc_{k}^{\G}(K)$ then $R$ is associated with a $k$-rectifiable set $E \subset K$ having an $\Ha^{k}$-integrable $\G$-orientation defined on it (see \cite[Section 3.6]{DPH_advanced}). In this case, we shall write $R = \llbracket E, \tau \otimes g \rrbracket$. Furthermore, it holds
\begin{equation} \label{the mass of rectifiable chains}
\mass(R) = \int_{E} \| g \| \, d\Ha^{k} \,.
\end{equation}
\subsection{Boundary and flat norm}\label{subs_flat}
If $P \in \Po_{k}^{\G}(K)$ is a $k$-dimensional polyhedral $\G$-chain of the form
\[
P = \sum_{\ell=1}^{N} g_{\ell} \llbracket \sigma_{\ell} \rrbracket\,,
\]
then the \emph{boundary} of $P$ is the $(k-1)$-dimensional polyhedral $\G$-chain defined by
\begin{equation} \label{bdry_polyhedral}
\partial P := \sum_{\ell=1}^{N} g_{\ell} \partial \llbracket \sigma_{\ell} \rrbracket\,,
\end{equation}
where $\partial \llbracket \sigma \rrbracket$ is the classical boundary of $\llbracket \sigma \rrbracket$ in the sense of integral currents.

Let $P \in \Po_{k}^{\G}(K)$. The \emph{flat norm} of $P$ is
\begin{equation} \label{def:flat_norm}
\flat(P) := \inf\left\lbrace \mass(Q) + \mass(P - \partial Q) \, \colon \, Q \in \Po_{k+1}^{\G}(K) \right\rbrace\,.
\end{equation}

Observe that $\flat(P) \leq \mass(P)$ by definition, and that $\flat(\partial P) \leq \flat(P)$ (note that, as usual, $\partial (\partial Q) = 0$ for every polyhedral $Q$). 
\subsection{Flat $G$-chains}
The $\flat$-completion of $\Po_{k}^{\G}(K)$ is the group $\F_{k}^{\G}(K)$ of $k$-dimensional \emph{flat} $\G$-chains in $K$. The same group of flat $\G$-chain would be obtained by completing the Lipschitz $\G$-chains $\mathscr{L}_{k}^{\G}(K)$ with respect to an analogously defined flat norm. It holds true that $\Rc_{k}^{\G}(K) \subset \F_{k}^{\G}(K)$ with continuous inclusion with respect to the mass topology on $\Rc_{k}^{\G}(K)$ and the flat topology on $\F_{k}^{\G}(K)$. By \cite[Theorem 5.3.1]{DPH_advanced}, if $R \in \Rc_{k}^{\G}(K)$, then its flat norm is given by
\begin{equation} \label{flat_rect}
\flat(R) = \inf\left\lbrace \mass(S) + \mass(Z) \, \colon \, S \in \Rc_{k}^{\G}(K) \mbox{ and } Z \in \Rc_{k+1}^{\G}(K) \mbox{ with } R = S + \partial Z \right\rbrace\,.
\end{equation}

The boundary operator $\partial \colon \Po_{k+1}^{\G}(K) \to \Po_{k}^{\G}(K)$ admits a continuous extension \[\partial \colon \left( \F_{k+1}^{\G}(K), \flat \right) \to \left( \F_{k}^{\G}(K), \flat \right) \] such that $\partial (\partial T) = 0$ and $\flat(\partial T) \leq \flat(T)$ for every $T \in \F_{k+1}^{\G}(K)$. 

\subsection{The case ${\rm G} = \R^m$: comparison with Section \ref{s:syllabus_euclidean}}
The constructions outlined in the previous paragraphs evidently apply as well to the case when ${\rm G} = \R^m$, thus leading to seemingly different definitions of the classes of chains with coefficients in $\R^m$ compared to those given in Section \ref{s:syllabus_euclidean}. It is easily seen that the two approaches are in fact equivalent. This follows directly from the following observations: 
\begin{itemize}
\item[(i)] the classes of polyhedral currents and chains defined, respectively, in Subsections \ref{ssec:correnti rettificabili} and \ref{ssec:catene poliedrali} coincide; analogously, the mass functional, and therefore the flat norm, are defined in the same way on polyhedral currents;
\item[(ii)] rectifiable $k$-currents with coefficients in $\R$ (as defined in Section \ref{s:syllabus_euclidean} with $m=1$) are the $\mass$-completion of $k$-dimensional Lipschitz $\R$-chains by \cite[Theorem 4.1.28]{Federer_GMT}; analogously, flat $k$-currents with coefficients in $\R$ (as defined in Section \ref{s:syllabus_euclidean} with $m=1$) are the $\flat$-completion of $k$-dimensional polyhedral chains by \cite[4.1.23]{Federer_GMT};
\item[(iii)] a $k$-current in $\R^n$ with coefficients in $\R^m$ (as defined in Section \ref{s:syllabus_euclidean}) is polyhedral, rectifiable, flat, or of finite mass if and only if all its components are such.
\end{itemize}
By virtue of these considerations, and as already anticipated, Theorem \ref{t:repr_networks} is just a rewriting of Theorem \ref{reprect} below in the case ${\rm G} = \R^m$ with $k=1$.

\subsection{Restriction and slicing}\label{slic}

We will denote by $R \trace U$ the \emph{restriction} of a rectifiable $R \in \Rc_{k}^{\G}(K)$ to a Borel subset $U$ (cf. \cite[Section 3.4]{DPH_advanced}). In particular, if $R \in \Rc_{0}^{\G}(K)$ has the form
\[
R = \sum_{a \in A} g(a) \llbracket a \rrbracket\,,
\] 
then 
\[
R \trace U = \sum_{a \in A \cap U} g(a) \llbracket a \rrbracket\,.
\]

Recall that if $E \subset \R^{n}$ is $k$-rectifiable, and if $f \colon \R^{n} \to \R^{p}$ is Lipschitz with $p \leq k$, then the set $E \cap f^{-1}(\{y\})$ is $(k-p)$-rectifiable for (Lebesgue) almost every $y \in \R^p$. If $R = \llbracket E, \tau \otimes g \rrbracket \in \Rc_{k}^{\G}(K)$, then for a.e. $y \in \R^p$ it is well defined (see \cite[Section 3.7]{DPH_advanced}) the \emph{slice} of $R$ via $f$ at $y$, denoted
\[
\langle R, f, y \rangle \in \Rc_{k-p}^{\G}(K)\,.
\] 
For these $y$, the rectifiable $\G$-chain $\langle R, f, y \rangle$ has supporting set on $E \cap f^{-1}(\{y\})$, and at $\Ha^{k-p}$-a.e. $x \in E \cap f^{-1}(\{y\})$ the $\G$-orientation of $\langle R, f, y \rangle$ at $x$ is $\pm (\tau \otimes g)(x)$, where the $\pm$ sign is determined depending on the behaviour of $f$ in a neighborhood of $x$.

The following formulae involving the operations just introduced will be very useful in the sequel.
\begin{prop}[{\cite[Theorems 3.7.1 and 5.2.4]{DPH_advanced}}]
Suppose that $S,T \in \Rc_{k}^{\G}(K)$, $U$ is a Borel measurable subset of $\R^n$, and $f \colon \R^{n} \to \R^{p}$ is Lipschitz, with $p \leq k$. Then, the following conclusions hold true:
\begin{equation} \label{sum_restriction}
(S + T) \trace U = S \trace U + T \trace U \,;
\end{equation}
\begin{equation} \label{sum_slicing}
\langle S + T, f, y \rangle = \langle S, f, y \rangle + \langle T, f, y \rangle \quad \mbox{for a.e. } y \in \R^p\,;
\end{equation}
\begin{equation} \label{restr_slicing}
\langle T \trace U, f, y \rangle = \langle T,f,y \rangle \trace U \quad \mbox{for a.e. } y \in \R^p\,;
\end{equation}
\begin{equation} \label{GAC}
\mass(T \trace U) \leq \mass(T)\,;
\end{equation}

\begin{equation} \label{slicing_coarea}
\int_{\R^p} \mass(\langle T, f, y \rangle) \, dy \leq C_{k,p} (\Lip(f))^{p} \mass(T)\,.
\end{equation}

Furthermore, if $p=1$ and $\partial T$ is also rectifiable then one has:
\begin{equation} \label{slicing_formula}
\langle T,f,y \rangle = \partial(T \trace \{ f \leq y \}) - (\partial T) \trace \{ f \leq y \} \quad \mbox{for a.e. } y \in \R\,.
\end{equation}

\end{prop}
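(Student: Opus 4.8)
The statement is a catalogue of standard properties of the restriction and slicing operators on rectifiable $\G$-chains, so the plan is to obtain each identity either directly from its definition or by reduction to the classical theory of real currents. First I would treat polyhedral $\G$-chains $P=\sum_\ell g_\ell\llbracket\sigma_\ell\rrbracket$: here the operators $P\trace U$ and $\langle P,f,y\rangle$ act "coefficient by coefficient" on the finitely many integral currents $\llbracket\sigma_\ell\rrbracket$, so that \eqref{sum_restriction}, \eqref{sum_slicing}, \eqref{restr_slicing}, the monotonicity \eqref{GAC}, the coarea estimate \eqref{slicing_coarea} and the slicing formula \eqref{slicing_formula} all follow at the polyhedral level from the corresponding facts for classical integral currents in \cite[Sections 4.1.7 and 4.2.1]{Federer_GMT}, the constant $C_{k,p}$ in \eqref{slicing_coarea} being the usual dimensional constant from the coarea inequality.

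The second step is the extension to general rectifiable $\G$-chains by density. Since $\Rc_k^\G(K)$ is, by definition, the $\mass$-completion of the Lipschitz (hence, after subdivision, of the polyhedral) $\G$-chains, and since $T\mapsto T\trace U$ does not increase mass, the restriction operator extends continuously, yielding \eqref{sum_restriction} and \eqref{GAC} for all rectifiable $T$. For slicing one argues as follows: given $P_h\to T$ in mass, the polyhedral case of \eqref{slicing_coarea} applied to $P_h-P_{h'}$ gives $\int_{\R^p}\mass(\langle P_h,f,y\rangle-\langle P_{h'},f,y\rangle)\,dy\to 0$, so that, along a subsequence, $\langle P_h,f,y\rangle$ is $\mass$-Cauchy for a.e.\ $y$; one then \emph{defines} $\langle T,f,y\rangle$ as its limit and checks (again via \eqref{slicing_coarea}) that this limit is independent of the approximating sequence and, for a.e.\ $y$, is a rectifiable chain carried by $E\cap f^{-1}(\{y\})$ with the stated $\G$-orientation. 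The identities \eqref{sum_slicing}, \eqref{restr_slicing} and \eqref{slicing_formula} then pass to the limit from their polyhedral counterparts.

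The genuinely delicate point is precisely this last passage: the well-posedness of $\langle T,f,y\rangle$ for rectifiable (not merely polyhedral) $T$, together with the a.e.\ characterization of its carrier and $\G$-orientation in terms of those of $T$. Carrying this out in full detail is exactly the content of \cite[Theorems 3.7.1 and 5.2.4]{DPH_advanced}, so in practice the proof consists of invoking that reference. When $\G=\R^m$ --- the only case needed in this paper --- one may alternatively split $T$ into its $m$ components and apply the classical slicing theory for real currents of \cite{Federer_GMT} componentwise, which is consistent with the discussion in \S\ref{s:syllabus_euclidean}.
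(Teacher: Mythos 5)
The paper does not actually prove this proposition: it is imported verbatim from \cite[Theorems 3.7.1 and 5.2.4]{DPH_advanced}, so your argument ultimately coincides with the paper's ``proof'' --- the citation --- supplemented by a correct sketch of the standard polyhedral-approximation and mass-completion reasoning that underlies it (and, for $\G=\R^m$, the componentwise reduction the paper itself uses elsewhere). The only quibble is cosmetic: Lipschitz $\G$-chains are not polyhedral ``after subdivision'' but are mass-approximable by polyhedral ones, and the a.e.\ well-posedness of $\langle T,f,y\rangle$ that you correctly identify as the delicate point is precisely what the cited theorems supply, so nothing essential is missing.
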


We will also need the following result.

\begin{prop}[{\cite[Theorem 2.1]{W_rectifiability}}] \label{augmentami_ti_prego}
There exists a group homomorphism (typically known as the \emph{augmentation map}) $\chi \colon \F_{k}^{\G}(K) \to \G$ with the following properties:
\begin{itemize}
\item[$(i)$] $\chi\left( \sum_{a}g(a) \llbracket a \rrbracket \right) = \sum_{a}g(a)$;

\item[$(ii)$] $\chi(\partial T) = 0$ for every $T \in \F_{1}^{\G}(K)$;

\item[$(iii)$] $\| \chi(T) \| \leq \flat(T)$;

\item[$(iv)$] $\flat(T) \leq \| \chi(T)\| + \mass(T) \diam(\spt(T))$.

\end{itemize}
\end{prop}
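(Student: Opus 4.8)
The plan is to reduce the whole statement to the $0$-dimensional case, where $\chi$ is nothing but the ``total coefficient'' functional. All the content lies there: property $(i)$ pins down $\chi$ on $0$-chains, properties $(ii)$ and $(iv)$ are assertions about $\chi$ evaluated on the $0$-dimensional chains $\partial T$ and on $\F_0^{\G}(K)$ respectively, and for $k\ge 1$ one may simply take $\chi\equiv 0$, so that $(iii)$ becomes trivial. Thus the substance is the construction of a group homomorphism $\chi\colon\F_0^{\G}(K)\to\G$ with $(i)$, $(iii)$, $(iv)$, which I would carry out as the flat-continuous extension of the obvious definition on polyhedral chains (this is essentially White's argument in \cite{W_rectifiability}).

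First I would set, on $\Po_0^{\G}(K)$, $\chi_0\big(\sum_a g(a)\llbracket a\rrbracket\big):=\sum_a g(a)$. The crucial elementary remark is that $\chi_0$ annihilates boundaries: if $Q=\sum_\ell g_\ell\llbracket\sigma_\ell\rrbracket\in\Po_1^{\G}(K)$, then $\partial\llbracket[a_\ell,b_\ell]\rrbracket=\llbracket b_\ell\rrbracket-\llbracket a_\ell\rrbracket$ has vanishing total coefficient, whence $\chi_0(\partial Q)=0$. Consequently, for any $Q\in\Po_1^{\G}(K)$, writing $R:=P-\partial Q\in\Po_0^{\G}(K)=\sum_a h(a)\llbracket a\rrbracket$ one gets $\chi_0(P)=\chi_0(R)$, and therefore $\|\chi_0(P)\|=\big\|\sum_a h(a)\big\|\le\sum_a\|h(a)\|=\mass(R)\le\mass(Q)+\mass(P-\partial Q)$. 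Taking the infimum over $Q$ yields $\|\chi_0(P)\|\le\flat(P)$, i.e. $\chi_0$ is $1$-Lipschitz from $(\Po_0^{\G}(K),\flat)$ into the complete group $\G$; hence it extends uniquely to a group homomorphism $\chi\colon\F_0^{\G}(K)\to\G$ with $\|\chi(T)\|\le\flat(T)$, which is $(iii)$, while $(i)$ holds by construction. Property $(ii)$ then follows by density: if $T\in\F_1^{\G}(K)$ and $P_h\to T$ in flat norm with $P_h$ polyhedral, then $\partial P_h\to\partial T$ and $\chi(\partial T)=\lim_h\chi_0(\partial P_h)=0$.

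For $(iv)$ I would use the cone construction, available because $K$ is convex. Given $P=\sum_a g(a)\llbracket a\rrbracket\in\Po_0^{\G}(K)$ with $\spt(P)\neq\emptyset$, fix $p\in\spt(P)$ and set $C_pP:=\sum_a g(a)\llbracket[p,a]\rrbracket\in\Po_1^{\G}(K)$. A direct boundary computation gives $\partial(C_pP)=P-\chi_0(P)\llbracket p\rrbracket$, while $\mass(C_pP)=\sum_a\|g(a)\|\,|a-p|\le\mass(P)\,\diam(\spt(P))$. Since $\flat$ is subadditive, $\flat(g\llbracket p\rrbracket)\le\|g\|$, and $\flat(\partial(C_pP))\le\mass(C_pP)$, we obtain
\[
\flat(P)\le\flat\big(\chi_0(P)\llbracket p\rrbracket\big)+\flat\big(\partial(C_pP)\big)\le\|\chi_0(P)\|+\mass(P)\,\diam(\spt(P))\,.
\]
For a general $T\in\F_0^{\G}(K)$ the inequality is vacuous when $\mass(T)=+\infty$; when $\mass(T)<+\infty$ one passes to the limit in the polyhedral estimate along a flat-approximating sequence $P_h\to T$ chosen so that $\mass(P_h)\to\mass(T)$ and $\diam(\spt(P_h))\to\diam(\spt(T))$, using the continuity of $\chi$ and $\flat$.

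I expect this last step to be the only genuine obstacle: producing a polyhedral approximation of a finite-mass flat $0$-chain that controls \emph{simultaneously} the mass — which is merely lower semicontinuous under flat convergence, so that $\liminf$ bounds come for free but $\limsup$ bounds do not — and the diameter of the support. This is exactly the kind of careful approximation built into the foundations of the theory of flat $\G$-chains (and is where one must appeal to the structure results recalled in \cite{DPH_advanced,W_rectifiability}); by contrast, the construction of $\chi$ and properties $(i)$–$(iii)$ are soft, resting only on completeness of $\G$ and on the defining infimum in the flat norm.
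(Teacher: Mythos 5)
The paper does not prove this proposition at all --- it imports it verbatim from White \cite{W_rectifiability} --- so the only question is whether your argument is sound, and it essentially is: it reconstructs White's proof. The definition of $\chi_0$ on $\Po_{0}^{\G}(K)$, the fact that $\chi_0$ annihilates polyhedral boundaries (hence $\|\chi_0(P)\|\le\flat(P)$ and the extension to $\F_{0}^{\G}(K)$ by completeness of $\G$), and the cone estimate giving $(iv)$ on polyhedral chains are all correct. You are also right to read the proposition as a statement about $0$-chains: taken literally for $k\ge 1$ with $\chi\equiv 0$, item $(iv)$ would assert $\flat(T)\le\mass(T)\,\diam(\spt(T))$, which fails already for a single short segment (its flat norm dominates the flat norm of its boundary dipole, which is of order $\ell$ rather than $\ell^{2}$); in the paper the proposition is only ever applied to $0$-chains, consistently with White's formulation.

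The step you flag --- passing $(iv)$ from polyhedral chains to a general finite-mass $T\in\F_{0}^{\G}(K)$ --- is a genuine gap as written, but a standard one, and it can be closed in either of two ways. (a) The approximation theorem for flat $\G$-chains of finite mass (see \cite{W_deformation}) supplies polyhedral $P_h$ with $\flat(T-P_h)\to 0$, $\mass(P_h)\le \mass(T)+h^{-1}$, and $\spt(P_h)$ contained in the $h^{-1}$-neighbourhood of $\spt(T)$; this is exactly the simultaneous control of mass and support you need, after which lower semicontinuity of $\flat$ and continuity of $\chi$ finish the argument. (b) More cleanly, one runs the cone construction directly at the level of flat chains (Fleming's cone over a finite-mass flat chain via the homotopy formula, cf. \cite{Fleming}): for $p\in\spt(T)$ this produces $Q\in\F_{1}^{\G}(K)$ with $\partial Q=T-\chi(T)\llbracket p\rrbracket$ and $\mass(Q)\le \diam(\spt(T))\,\mass(T)$, whence $(iv)$ follows from the definition of the flat norm with no approximation at all. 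A last minor point: item $(i)$ as stated also covers countable (rectifiable) sums; these are mass-limits, hence flat-limits, of their finite partial sums, so $(i)$ extends by the continuity of $\chi$ that you already established.
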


\section{The representation theorem on rectifiable $\G$-chains}\label{s:representation}

Let $\left( \G, \| \cdot \| \right)$ be a normed Abelian additive group as above. 

We will consider a \emph{cost function}
\[
\cost \colon \G \to \left[ 0, \infty \right)
\]
satisfying the following properties:
\begin{itemize}

\item[$(C1)$] $\cost$ is \emph{even}, that is $\cost(-g) = \cost(g)$ for every $g \in \G$, and furthermore $\cost(g) = 0$ if and only if $g = 0 \in \G$;

\item[$(C2)$] $\cost$ is \emph{lower semi-continuous}, namely
\[
\cost(g) \leq \liminf_{h \to \infty} \cost(g_h)\,,
\]
whenever $\{ g_h \}_{h=1}^{\infty}$ is a sequence in $\G$ such that $\| g - g_h \| \to 0$ as $h \uparrow \infty$;

\item[$(C3)$] $\cost$ is \emph{subadditive}, that is
\[
\cost(g_1 + g_2) \leq \cost(g_1) + \cost(g_2) \quad \mbox{for every } g_1, g_2 \in \G\,.
\]

\end{itemize}

Observe that, when $\G = \R^m$, any cost function as above which, in addition, is monotone non-decreasing is a multi-material transportation cost as in Definition \ref{def:mmtcost}.

Let now $K \subset \R^n$ be a convex compact set. If $P \in \Po_{k}^{\G}(K)$ is a $k$-dimensional polyhedral $\G$-chain of the form
\[
P = \sum_{\ell=1}^{N} g_{\ell} \llbracket \sigma_\ell \rrbracket
\]
for some $g_\ell \in \G$ and $\sigma_\ell$ non-overlapping oriented $k$-simplexes, then we can define the \emph{energy} of $P$ by setting
\begin{equation} \label{def_phi}
\en(P) := \sum_{\ell=1}^{N} \cost(g_\ell) \Ha^{k}(\sigma_\ell)\,.
\end{equation}
Observe that $\en(P) = \mass(P)$ with the choice $\cost(g) = \|g\|$.

This definition naturally extends via relaxation to any $k$-dimensional flat $\G$-chain $T$, thus allowing to define the functional
\[
\E \colon \F_{k}^{\G}(K) \to \R
\]
by setting
\begin{equation} \label{def_E}
\E(T) := \inf\left\lbrace \liminf_{h \to \infty} \en(P_{h}) \, \colon \, \{P_{h}\} \subset \Po_{k}^{\G}(K) \mbox{ with } \flat(T - P_h) \to 0 \right\rbrace\,.
\end{equation}

The following theorem is the anticipated result concerning the representation of $\en(T)$ when $T \in \Rc_{k}^{\G}(K)$.

\begin{theorem} \label{reprect}

Let $R \in \Rc_{k}^{\G}(K)$. If $R = \llbracket \Sigma, \tau \otimes g \rrbracket$ is associated with the $k$-rectifiable set $\Sigma$ and the $\G$-valued orientation $\tau \otimes g$, then 
\begin{equation}
\E(R) = \int_{\Sigma} \cost(g(x)) \, d\Ha^{k}(x)\,.
\end{equation}

\end{theorem}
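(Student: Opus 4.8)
The plan is to prove the two inequalities separately. The inequality $\E(R) \le \int_{\Sigma}\cost(g)\,d\Ha^{k}$ is the easier one. We may assume the right-hand side is finite. Given $\eps>0$, use Lusin's theorem to select a compact set $\Sigma'\subset\Sigma$ on which both $g$ and $\cost\circ g$ are continuous and with $\Ha^{k}(\Sigma\setminus\Sigma')$ so small that $\int_{\Sigma\setminus\Sigma'}(\|g\|+\cost(g))\,d\Ha^{k}<\eps$. Exploiting that $\Sigma'$ is $k$-rectifiable, cover it up to an $\Ha^{k}$-null set by finitely many pieces, each a Lipschitz graph of Lipschitz constant $\le\eps$ over a $k$-plane; triangulate each base plane finely and replace the graph over each small simplex by the corresponding affine $k$-simplex carrying the constant multiplicity $g$ evaluated at one point of the piece above it, keeping only finitely many simplexes. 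Straightening the graphs produces a flat error controlled by the small Lipschitz constants and by the mass of the discarded tail (together with $R\trace(\Sigma\setminus\Sigma')$), while the uniform continuity of $\cost\circ g$ on $\Sigma'$ makes the energy of the resulting polyhedral $\G$-chain $P^{\eps}$ lie within $C\eps$ of $\int_{\Sigma'}\cost(g)\,d\Ha^{k}\le\int_{\Sigma}\cost(g)\,d\Ha^{k}$. Taking $\eps=1/h\to0$ gives a recovery sequence in the sense of \eqref{def_E}, hence the bound.

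For $\E(R)\ge\int_{\Sigma}\cost(g)\,d\Ha^{k}$, fix polyhedral chains $P_{h}=\sum_{\ell}g_{\ell}^{h}\llbracket\sigma_{\ell}^{h}\rrbracket$ with $\flat(P_{h}-R)\to0$ and $\en(P_{h})\to\E(R)<\infty$, and let $\mu_{h}:=\sum_{\ell}\cost(g_{\ell}^{h})\,\Ha^{k}\trace\sigma_{\ell}^{h}$ be the associated energy measures, so $\mu_{h}(K)=\en(P_{h})$. Up to subsequences $\mu_{h}\wto\mu$ with $\mu(K)\le\E(R)$, and it suffices to establish the density bound $\Theta^{k}_{*}(\mu,x)\ge\cost(g(x))$ for $\Ha^{k}\trace\Sigma$-a.e.\ $x\in\Sigma$: a standard density comparison (Vitali--Besicovitch covering, localized on the level sets of $\cost\circ g$) then yields $\mu\trace\Sigma\ge(\cost\circ g)\,\Ha^{k}\trace\Sigma$, whence $\E(R)\ge\mu(\Sigma)\ge\int_{\Sigma}\cost(g)\,d\Ha^{k}$. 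The building block is the estimate $\E(g_{0}\llbracket D\rrbracket)\ge\cost(g_{0})\,\Ha^{k}(D)$ for a flat $k$-disk $D$ contained in a $k$-plane $\pi_{0}$ and $g_{0}\in\G$: if $Q$ is polyhedral with $\flat(Q-g_{0}\llbracket D\rrbracket)$ small, then the orthogonal projection $p_{\sharp}Q$ onto $\pi_{0}$ is a top-dimensional polyhedral $\G$-chain in $\R^{k}\simeq\pi_{0}$, i.e.\ a $\G$-valued step function $u$ times $\Ha^{k}$, and evenness and subadditivity of $\cost$ give $\en(p_{\sharp}Q)=\int_{\R^{k}}\cost(u)\,d\Ha^{k}\le\en(Q)$ (projection can only superpose multiplicities and does not increase $\Ha^{k}$), while $\|u-g_{0}\mathbbm 1_{D}\|_{L^{1}}=\flat(p_{\sharp}Q-g_{0}\llbracket D\rrbracket)\le\flat(Q-g_{0}\llbracket D\rrbracket)$ because top-dimensional chains in $\R^{k}$ have no flat competitors; running $\flat(Q-g_{0}\llbracket D\rrbracket)\to0$, passing to an a.e.-convergent subsequence of the $u$'s, and combining Fatou's lemma with the lower semicontinuity $(C2)$ of $\cost$ proves the claim.

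To obtain the density bound at a point, fix $x_{0}\in\Sigma$ --- this holds $\Ha^{k}\trace\Sigma$-a.e.\ --- which is a Lebesgue point of $g$ and of $\cost\circ g$ with respect to $\Ha^{k}\trace\Sigma$, at which $\Ha^{k}\trace\Sigma$ has density $1$ and the approximate tangent plane $\pi_{0}=\Tan(\Sigma,x_{0})$ exists. At such $x_{0}$ the rescalings $(\iota_{x_{0},r})_{\sharp}(R\trace B_{r}(x_{0}))$, with $\iota_{x_{0},r}(y):=(y-x_{0})/r$, converge in mass --- hence in flat norm --- to $g(x_{0})\llbracket\pi_{0}\cap B_{1}\rrbracket$ as $r\to0$, this being part of the structure theory of rectifiable $\G$-currents recalled in Section~\ref{currents}. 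Correspondingly rescaling the $P_{h}$, using the continuity of restriction and slicing under flat convergence, a diagonal argument over $r\to0$ and $h\to\infty$ produces polyhedral $\G$-chains $Q_{j}$ supported in $\overline{B_{1}}$ with $\flat(Q_{j}-g(x_{0})\llbracket\pi_{0}\cap B_{1}\rrbracket)\to0$ and
\[
\liminf_{j}\en(Q_{j})\ \le\ \liminf_{r\to0}r^{-k}\,\mu(\overline{B_{r}(x_{0})})\ =\ \omega_{k}\,\Theta^{k}_{*}(\mu,x_{0})\,,
\]
where the inequality uses that the energy mass of $Q_{j}$ is, up to a vanishing error, $r_{j}^{-k}\mu_{h_{j}}(B_{r_{j}}(x_{0}))$ together with $\limsup_{h}\mu_{h}(\overline{B_{r}(x_{0})})\le\mu(\overline{B_{r}(x_{0})})$. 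Since $\E(Q_{j})\le\en(Q_{j})$ and $\E$ is lower semicontinuous under flat convergence, the flat-disk estimate gives
\[
\omega_{k}\,\cost(g(x_{0}))\ \le\ \E\big(g(x_{0})\llbracket\pi_{0}\cap B_{1}\rrbracket\big)\ \le\ \liminf_{j}\en(Q_{j})\ \le\ \omega_{k}\,\Theta^{k}_{*}(\mu,x_{0})\,,
\]
i.e.\ $\Theta^{k}_{*}(\mu,x_{0})\ge\cost(g(x_{0}))$, as required.

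The main obstacle is the blow-up step of the last paragraph: one must invoke that the rescalings of a $\G$-rectifiable current converge to the tangent disk in the $\G$-coefficient setting, and, more delicately, one must carry out the diagonal argument so that the three interlocking passages to the limit --- in $h$, in the dilation parameter $r$, and in the weak-$^{*}$ convergence of the energy measures $\mu_{h}$ --- are combined rigorously, paying attention to the harmless but fiddly truncations at $\partial B_{1}$. The upper-bound construction is also somewhat technical, but it becomes routine once Lusin's theorem is combined with the covering of a rectifiable set by Lipschitz graphs of arbitrarily small Lipschitz constant.
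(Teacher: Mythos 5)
Your architecture is sound but genuinely different from the paper's in both directions, so let me compare before flagging the one real issue. For the inequality $\E(R)\le\E_0(R)$ the paper builds the recovery sequence by covering $\mu$-almost all of $\Sigma$ with disjoint balls on which $R$ is flat-close to the tangent disc $S_{x,r}$ (Lemma \ref{lem:flat_approximation} plus Vitali--Besicovitch), whereas you use Lusin plus a triangulated Lipschitz-graph covering with small Lipschitz constants; both work, and yours avoids the blow-up lemma entirely in this direction. For the opposite inequality the paper proves full lower semicontinuity of $\E_0$ along flat-converging rectifiable currents (Proposition \ref{prop:semi-continuity}) by slicing down to dimension $0$ via the integral-geometric identity and the augmentation map; you instead extract a weak-$^*$ limit $\mu$ of the energy measures and prove the density bound $\Theta^k_*(\mu,x)\ge\cost(g(x))$ pointwise. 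Your building block --- the flat-disc estimate via orthogonal projection onto $\pi_0$, using evenness and subadditivity of $\cost$ to control the energy of $p_\sharp Q$, and the fact that a top-dimensional chain supported in a $k$-plane has flat norm equal to its mass (project any filling, which degenerates) --- is correct and arguably more elementary than the paper's $0$-dimensional slicing; it only needs lower semicontinuity of $\E_0$ on flat discs rather than on arbitrary rectifiable currents.

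The gap is in the blow-up step. First, the assertion that $(\iota_{x_0,r})_\sharp(R\trace B_r(x_0))$ converges \emph{in mass} to $g(x_0)\llbracket\pi_0\cap B_1\rrbracket$ is false: the supports are generically essentially disjoint rectifiable sets (a graph with vanishing gradient at $x_0$ is still disjoint from its tangent plane), so the mass of the difference stays bounded away from zero; only flat convergence can hold. Second, that flat convergence is not ``recalled in Section \ref{currents}'' --- it is precisely the content of the paper's Lemma \ref{lem:flat_approximation}, which requires a two-page proof --- and even granting that lemma, its statement $\flat(R\trace B(x,r)-S_{x,r})=o\bigl(\mass(R\trace B(x,r))\bigr)=o(r^k)$ does \emph{not} by itself give flat convergence of the rescalings: under $\iota_{x_0,r}$ the $k$-dimensional part of a flat decomposition scales by $r^{-k}$ but the $(k+1)$-dimensional filling scales by $r^{-k-1}$, so the naive estimate only yields $o(r^{-1})$. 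To close this you must either extract from the proof of that lemma the refined decomposition $R\trace B_r-S_{x,r}=A_r+\partial B_r$ with $\mass(A_r)\le\e(r)\,r^k$ and $\mass(B_r)\le\e(r)\,r^{k+1}$ (which the homotopy construction there does provide), or dispense with rescaling altogether by making your flat-disc estimate quantitative --- your projection argument already gives $\Ha^k(\{y\in D:\|u(y)-g_0\|\ge\eta\})\le\flat(Q-g_0\llbracket D\rrbracket)/\eta$, which applied at scale $r$ with $D=\pi_{x_0}\cap B_r$ and the modulus of lower semicontinuity of $\cost$ at $g(x_0)$ yields $\mu_h(\overline{B_r})\ge(1-\e)\cost(g(x_0))\,\omega_k r^k$ directly. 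With either repair the argument is complete.
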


\begin{notazioni}

From now on, if $R = \llbracket \Sigma, \tau \otimes g \rrbracket \in \Rc_{k}^{\G}(K)$, we shall set
\begin{equation} \label{def_E0}
\E_{0}(R) := \int_{\Sigma} \cost(g(x)) \, d\Ha^{k}(x)\,.
\end{equation}

\end{notazioni}

\begin{remark}

Note that, by property $(C1)$, the energy $\E_{0}$ is well defined on $\Rc_{k}^{\G}(K)$, in the sense that the integrand only depends on the $\G$-orientation $\tau \otimes g := \left[ (\tau,g) \right]_{\equiv}$ of $R$, and not on the specific representative $(\tau,g)$.

\end{remark}

The first step towards a proof of Theorem \ref{reprect} consists of showing that the energy $\E_0$ is lower semi-continuous with respect to flat convergence of rectifiable $\G$-chains. 

\begin{prop}[Lower semi-continuity of $\E_0$] \label{prop:semi-continuity}

Let $A \subset \R^{n}$ be an open set. Let $R_{h},R \in \Rc_{k}^{\G}(K)$ be such that $\flat(R - R_h) \to 0$ as $h \uparrow \infty$. Then
\begin{equation} \label{semi-continuity}
\E_{0}(R \trace A) \leq \liminf_{h\to \infty} \E_{0}(R_h \trace A)\,.
\end{equation}

\end{prop}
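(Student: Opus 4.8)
The plan is to reduce the lower semi-continuity of $\E_0$ to the corresponding lower semi-continuity of the relaxed functional $\E$, which holds essentially by definition. The key point is that on rectifiable $\G$-currents the relaxed energy $\E$ agrees with the explicit integral $\E_0$ — but this is precisely the content of Theorem \ref{reprect}, which we are in the process of proving, so we cannot simply invoke it. Instead, the correct logical order is: first establish $\E(R) \le \E_0(R)$ for rectifiable $R$ (the ``easy'' inequality, obtained by exhibiting a good polyhedral recovery sequence via the deformation theorem for $\G$-chains), and separately prove the present Proposition as an independent statement whose proof does not rely on the full representation formula; then the reverse inequality $\E_0(R) \le \E(R)$ will be deduced in the subsequent sections using exactly this lower semi-continuity together with a blow-up / rectifiability argument. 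So here I will prove \eqref{semi-continuity} directly.

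First I would reduce to the case $A = \R^n$ by a localization argument: since $A$ is open and $\E_0$ is defined by integration of a nonnegative Borel integrand against $\Ha^k \trace \Sigma$, one has $\E_0(R \trace A) = \sup \E_0(R \trace A')$ over compactly contained open $A' \Subset A$, and on such $A'$ one can use a cutoff to replace $R_h \trace A$ by $R_h \trace A'$ modulo controlled flat error, so it suffices to treat $A = \R^n$ and prove $\E_0(R) \le \liminf_h \E_0(R_h)$ for $\flat(R - R_h) \to 0$. (Alternatively, one restricts attention to the closure of $A$ inside $K$ throughout; the restriction operator $\trace A$ on rectifiable $\G$-chains is well behaved by \eqref{sum_restriction} and \eqref{GAC}.) Next, I would pass to a subsequence realizing the liminf, and discard the case $\liminf_h \E_0(R_h) = +\infty$, which is trivial. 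With the liminf finite, along the chosen subsequence $\E_0(R_h)$ is bounded.

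The heart of the matter is then a slicing argument of the type used for the proof of lower semi-continuity of the $\alpha$-mass in the single-material case (cf.\ the scheme in \cite{Colombo2017a}), adapted to $\G$-coefficients. The idea is: by \eqref{slicing_coarea}, for a Lipschitz map $f\colon \R^n \to \R^p$ (with $p = k$, so that slices are $0$-dimensional) the slices $\langle R_h, f, y\rangle$ and $\langle R, f, y\rangle$ are, for a.e.\ $y$, $\G$-valued atomic measures, and $\flat(R - R_h) \to 0$ forces $\langle R_h, f, y\rangle \to \langle R, f, y\rangle$ in the flat norm for a.e.\ $y$ along a further subsequence (using \eqref{restr_slicing}, \eqref{slicing_formula} and a Fatou argument on $\int \flat(\langle R - R_h, f, y\rangle)\,dy$, which is controlled by a constant times $\flat(R - R_h)$ by the standard slicing estimates for the flat norm). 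For a $0$-dimensional flat $\G$-chain, flat convergence together with the structure of the augmentation map (Proposition \ref{augmentami_ti_prego}) and the subadditivity and lower semi-continuity of $\cost$ (properties $(C1)$–$(C3)$) yields the ``pointwise'' lower semi-continuity of the $0$-dimensional energy $\sum_a \cost(g(a))$. Finally, one integrates this $0$-dimensional inequality back up in $y$ using the coarea formula for rectifiable sets (choosing $f$ to be, say, an orthogonal projection, and using that the coarea factor equals the Jacobian, which is controlled), and takes a supremum over a countable dense family of such projections $f$ to recover the full integral $\int_\Sigma \cost(g(x))\,d\Ha^k(x)$; Fatou's lemma converts the pointwise-in-$y$ inequalities into the desired inequality between the integrals.

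The main obstacle I anticipate is the $0$-dimensional slice case: proving that if $Q_h \to Q$ in $\F_0^\G(K)$ with $Q_h = \sum_a g_h(a)\llbracket a\rrbracket$, $Q = \sum_a g(a) \llbracket a\rrbracket$, then $\sum_a \cost(g(a)) \le \liminf_h \sum_a \cost(g_h(a))$. Flat convergence of $0$-chains does \emph{not} give weak-$*$ convergence of the associated $\G$-valued measures in general (mass can escape), so one must argue locally: near each atom $a$ of $Q$, the total mass of $Q_h$ contained in a small ball must, by the flat estimate $(iv)$ of Proposition \ref{augmentami_ti_prego} applied on that ball (the diameter factor being small), carry a $\G$-value close to $g(a)$; subadditivity of $\cost$ then bounds $\cost(g(a))$ from above by the sum of $\cost$ over the atoms of $Q_h$ in that ball, up to an error controlled by lower semi-continuity of $\cost$ at $g(a)$; summing over finitely many atoms of $Q$ and letting the ball radii shrink closes the estimate. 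Making the ``mass escaping to infinity or concentrating on a lower-dimensional set'' issue harmless is exactly where the diameter term in Proposition \ref{augmentami_ti_prego}$(iv)$ and a careful covering argument are needed; this is the technical crux and will require the most care.
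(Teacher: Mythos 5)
Your overall skeleton --- reduce to $0$-dimensional slices, handle the $0$-dimensional case via the augmentation map together with subadditivity and lower semi-continuity of $\cost$ on small balls around the atoms, then lift the inequality back to dimension $k$ --- is exactly the strategy of the paper, and your treatment of the $0$-dimensional case is essentially correct: the paper localizes the flat convergence to small balls $B(x_i,\rho_i)$ by writing $R-R_h=S_h+\partial Z_h$ and slicing $Z_h$ by the distance function, then applies Proposition \ref{augmentami_ti_prego}$(i)$ and $(iii)$. (Note it is part $(iii)$, $\|\chi(T)\|\le\flat(T)$, not part $(iv)$, that converts smallness of the flat norm of $(R-R_h)\trace B(x_i,\rho_i)$ into closeness of the group elements; $(iv)$ goes in the wrong direction for this purpose.)

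However, the step where you ``integrate the $0$-dimensional inequality back up'' contains a genuine gap. For a fixed projection $\p_V$ onto a $k$-plane $V$, the coarea formula gives $\int_{\R^k}\E_{0}(\langle R,\p_V,y\rangle)\,dy=\int_\Sigma \cost(g(x))\,J_V(x)\,d\Ha^k(x)$, where $J_V(x)\le 1$ is the Jacobian of $\p_V$ restricted to $\Tan(\Sigma,x)$. Taking a supremum over a countable dense family of planes $V$ does \emph{not} recover $\int_\Sigma\cost(g)\,d\Ha^k$: the supremum of the integrals is in general strictly smaller than the integral of the pointwise supremum. Already for $k=1$, $n=2$ and $\Sigma$ the union of two orthogonal segments with equal energy density $a$, the quantity obtained from the projection onto the line at angle $\theta$ is $a\left(|\cos\theta|+|\sin\theta|\right)$, whose supremum is $a\sqrt{2}<2a=\E_0(R)$. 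The correct mechanism is to \emph{average} over the Grassmannian rather than take a supremum: the integral-geometric identity of Lemma \ref{maestosa-integral-geo} gives $\E_{0}(R)=c\int_{{\rm Gr}(n,k)\times\R^k}\E_{0}(\langle R,\p_V,y\rangle)\,d(\gamma_{n,k}\otimes\Ha^k)(V,y)$, and applying it to $R\trace A$ and to $R_h\trace A$, together with the a.e.\ flat convergence of the slices (extracted by Fatou from the slicing estimate you mention, integrated also in the variable $V$) and the $0$-dimensional result, closes the argument; this is what the paper does. Finally, your preliminary localization to $A'\Subset A$ is dispensable once one works with the slices of $R\trace A$ directly via \eqref{restr_slicing}; if you keep it, you still need a generic choice of $A'$ so that $(R-R_h)\trace A'$ flat-converges, since restriction is not continuous with respect to the flat norm.
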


We are going to need the following result, which extends a formula typically known in the literature as \emph{integral-geometric identity}. To state it, we will make use of the following notation. With ${\rm Gr}(n,k)$ we denote the Grassmannian of linear $k$-dimensional subspaces of $\R^n$. The Haar measure on ${\rm Gr}(n,k)$ is denoted $\gamma_{n,k}$: recall that $\gamma_{n,k}({\rm Gr}(n,k)) = 1$. Finally, if $V \in {\rm Gr}(n,k)$ then $\p_V \colon \R^n \to V$ denotes orthogonal projection onto $V$.

\begin{lem}[Integral-geometric identity]\label{maestosa-integral-geo}
There exists a constant $c = c(n,k)$ such that for any $R \in \Rc_k^{\G}(K)$ it holds:

\begin{equation}\label{e:int_geom}
\E_0(R)=c\int_{{\rm Gr}(n,k)\times\R^k}\E_0 \big(\langle R,\p_V,y\rangle \big) d(\gamma_{n,k}\otimes\Ha^k)(V,y).
\end{equation}
\end{lem}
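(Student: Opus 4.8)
The plan is to derive \eqref{e:int_geom} by two classical ingredients: the area formula for Lipschitz maps restricted to rectifiable sets, applied to the orthogonal projections $\p_V$ with $V\in\Gr(n,k)$, and the $O(n)$‑invariance of the Haar measure $\gamma_{n,k}$, which collapses an a priori $x$‑dependent average of Jacobians to a dimensional constant. Write $R=\llbracket\Sigma,\tau\otimes g\rrbracket$; after discarding an $\Ha^k$‑null part we may assume $\|g(x)\|>0$ for $\Ha^k$‑a.e.\ $x\in\Sigma$. Observe that both sides of \eqref{e:int_geom} are sums/integrals of \emph{non‑negative Borel} functions, hence well defined with values in $[0,+\infty]$, and the identity is claimed in $[0,+\infty]$; no integrability of $\cost\circ g$ on $\Sigma$ needs to be assumed.

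\emph{Step 1: the slice as a sum of atoms with unit multiplicity.} Fix $V\in\Gr(n,k)$ and apply the slicing description recalled in \S\ref{slic} to $f=\p_V$ (here $p=k$): for $\Ha^k$‑a.e.\ $y\in\R^k$ the slice $\langle R,\p_V,y\rangle\in\Rc_0^{\G}(K)$ is supported on the fibre $\Sigma\cap\p_V^{-1}(y)$, which for a.e.\ $(V,y)$ is a finite set, and at each such point $x$ the $\G$‑orientation of the slice equals $\pm(\tau\otimes g)(x)$. Thus $\langle R,\p_V,y\rangle=\sum_{x\in\Sigma\cap\p_V^{-1}(y)}(\pm g(x))\,\llbracket x\rrbracket$, and since $\cost$ is even by $(C1)$,
\[
\E_0\big(\langle R,\p_V,y\rangle\big)=\sum_{x\in\Sigma\cap\p_V^{-1}(y)}\cost(g(x))\qquad\text{for }(\gamma_{n,k}\otimes\Ha^k)\text{-a.e.\ }(V,y).
\]
The joint measurability of $(V,y)\mapsto\E_0(\langle R,\p_V,y\rangle)$ is standard: it follows by approximating $R$ in mass by Lipschitz $\G$‑chains, using the continuity of slicing and $(C2)$, and is implicit in the slicing machinery of \cite{DPH_advanced}.

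\emph{Step 2: area formula and Fubini.} For $V\in\Gr(n,k)$ let $J_V(x)$ be the $k$‑dimensional Jacobian of the linear map $\p_V|_{\Tan(\Sigma,x)}\colon\Tan(\Sigma,x)\to V$, defined for $\Ha^k$‑a.e.\ $x\in\Sigma$. The area formula for $\p_V$ restricted to the $k$‑rectifiable set $\Sigma$ (see e.g.\ \cite{Simon1983,Federer_GMT}), applied to the non‑negative Borel integrand $x\mapsto\cost(g(x))$, combined with Step 1, gives
\[
\int_{\Sigma}\cost(g(x))\,J_V(x)\,d\Ha^k(x)=\int_{\R^k}\E_0\big(\langle R,\p_V,y\rangle\big)\,d\Ha^k(y).
\]
Integrating against $\gamma_{n,k}$ and using Tonelli,
\[
\int_{\Gr(n,k)\times\R^k}\E_0\big(\langle R,\p_V,y\rangle\big)\,d(\gamma_{n,k}\otimes\Ha^k)(V,y)=\int_{\Sigma}\cost(g(x))\Big(\int_{\Gr(n,k)}J_V(x)\,d\gamma_{n,k}(V)\Big)\,d\Ha^k(x).
\]
Now $J_V(x)$ depends only on the pair of $k$‑planes $\big(\Tan(\Sigma,x),V\big)$, and since $\gamma_{n,k}$ is $O(n)$‑invariant while $O(n)$ acts transitively on $\Gr(n,k)$, the inner integral $\int_{\Gr(n,k)}J_V(W)\,d\gamma_{n,k}(V)$ is independent of the $k$‑plane $W$; call this positive number $c(n,k)^{-1}$ (it is the classical integral‑geometric/Cauchy–Crofton constant, cf.\ \cite{Federer_GMT}). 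Substituting, the right‑hand side equals $c(n,k)^{-1}\int_{\Sigma}\cost(g(x))\,d\Ha^k(x)=c(n,k)^{-1}\E_0(R)$, which is \eqref{e:int_geom} with $c=c(n,k)$.

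The only genuinely delicate point is Step 1: one must be sure that, for a.e.\ $(V,y)$, the slice $\langle R,\p_V,y\rangle$ is \emph{exactly} the sum of the atoms $\pm g(x)\llbracket x\rrbracket$ over the fibre, with no cancellations and with unit multiplicities, and that the resulting function of $(V,y)$ is measurable. Both are consequences of the slicing theory for rectifiable $\G$‑currents recalled in \S\ref{slic} (and developed in \cite{DPH_advanced}), together with the elementary fact that for $(\gamma_{n,k}\otimes\Ha^k)$‑a.e.\ $(V,x)$ the plane $\Tan(\Sigma,x)$ intersects $V^{\perp}$ only at the origin, so that $\p_V|_{\Tan(\Sigma,x)}$ is injective and $J_V(x)>0$; everything else is a bookkeeping application of the area formula, Tonelli, and the invariance argument.
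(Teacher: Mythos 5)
Your proof is correct, and it reaches the identity by a genuinely different route than the paper. Both arguments share the same reduction: for a.e.\ $(V,y)$ the slice $\langle R,\p_V,y\rangle$ is the $0$-dimensional rectifiable $\G$-current carried by the fibre $\Sigma\cap\p_V^{-1}(\{y\})$ with orientation $\pm(\tau\otimes g)$, so by evenness of $\cost$ its energy is the fibre sum $\sum_{x}\cost(g(x))$; after that, everything hinges on a scalar integral-geometric identity applied with integrand $\cost\circ g$. The paper obtains that scalar identity by quoting Federer's integral-geometric equality \cite[3.2.26; 2.10.15]{Federer_GMT} for the $\Ha^k$-measure of a rectifiable set (i.e.\ the case $f\equiv 1$) and then bootstrapping to general non-negative integrands by linearity on simple functions, $\sigma$-finiteness, and monotone convergence. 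You instead \emph{re-derive} the weighted Crofton-type formula from scratch: the area formula for $\p_V|_\Sigma$ produces the Jacobian-weighted integral $\int_\Sigma \cost(g)\,J_V\,d\Ha^k$, Tonelli swaps the order of integration, and the $O(n)$-invariance of $\gamma_{n,k}$ together with the transitivity of the $O(n)$-action on $\Gr(n,k)$ collapses $\int_{\Gr(n,k)}J_V(W)\,d\gamma_{n,k}(V)$ to a positive dimensional constant. Your route is more self-contained (it effectively proves the Federer lemma rather than citing it) at the cost of having to justify the joint measurability of $(V,x)\mapsto J_V(x)$ for Tonelli, which is routine since $J_V(x)$ depends continuously on the pair of planes $(V,\Tan(\Sigma,x))$ and $x\mapsto\Tan(\Sigma,x)$ is $\Ha^k\trace\Sigma$-measurable. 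One small inaccuracy: after discarding $\{g=0\}$ the set $\Sigma$ is only $\sigma$-finite for $\Ha^k$ (finite mass means $\int_\Sigma\|g\|\,d\Ha^k<\infty$, not $\Ha^k(\Sigma)<\infty$), so the fibres are a.e.\ countable rather than finite; this is harmless because, as you note, both sides are non-negative and the identity is asserted in $[0,+\infty]$, but the word ``finite'' in Step 1 should be ``countable''.
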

\begin{proof}
The identity is a consequence of \cite[3.2.26; 2.10.15]{Federer_GMT}, which states that if $E \subset \R^n$ is $k$-rectifiable then
\begin{equation}
\label{eqn:standard_igm}
\Ha^k(E) = c\int_{{\rm Gr}(n,k)}\int_{\R^k} \Ha^{0}( \p_{V}^{-1}(\{y\}) \cap E)   \, d\Ha^k(y)\, d\gamma_{n,k}(V).
\end{equation}
for some $c = c(n,k)$. Indeed, for any Borel set $A \subset \R^{n}$, denoting $f = \mathbf{1}_{A}$, \eqref{eqn:standard_igm} implies that
\begin{equation}\label{e:int_geom-f}
\int_{E} f(x)\,d\Ha^k(x) = c\int_{{\rm Gr}(n,k)}\int_{\R^k} \int_{E} f(x) \, \mathbf{1}_{  \p_{V}^{-1}(\{y\})}(x) \, d\Ha^0(x)  \, d\Ha^k(y)\, d\gamma_{n,k}(V).
\end{equation}
Since the previous equality is linear in $f$, it holds also when $f$ is piecewise constant. Since the measure $\Ha^k \trace E$ is $\sigma$-finite, the equality can be extended to any measurable function $f\in L^1(\Ha^{k} \trace E)$. The case $f \notin L^{1}(\Ha^{k} \trace E)$ follows from the Monotone Convergence Theorem via a simple truncation argument.

Taking $R= \llbracket E, \tau \otimes g \rrbracket$, and applying \eqref{e:int_geom-f} with $f(x)= \cost(g(x))$, we deduce that 
$$\E_0(R) =
 c\int_{{\rm Gr}(n,k)}\int_{\R^k} \int_{E\cap  \p_{V}^{-1}(\{y\})} \cost(g(x)) \, d\Ha^0(x)  \, d\Ha^k(y)\, d\gamma_{n,k}(V).
$$
We observe that the right-hand side coincides with the right-hand side in \eqref{e:int_geom},
since, for $\Ha^{k}$-a.e. $y \in \R^{k}$, the $0$-dimensional chain $\langle R,\p_V,y\rangle $ is associated with the set $E \cap \p_{V}^{-1}(y)$ with $\G$-orientation at $\Ha^{0}$-a.e. $x \in E \cap \p_{V}^{-1}(\{y\})$ equal to $\pm (\tau \otimes g) (x)$.
\end{proof}

\begin{proof}[Proof of Proposition \ref{prop:semi-continuity}]
Let us first assume $k=0$. If $R = \llbracket \Sigma, \tau \otimes g \rrbracket$ and $R_{h} = \llbracket \Sigma_{h}, \tau_h \otimes g_h \rrbracket$ then we can formally write
\[
R \trace A = \sum_{x \in \Sigma \cap A} \tau(x) g(x) \llbracket x \rrbracket\,,
\]
and
\[
R_h \trace A = \sum_{x \in \Sigma_{h} \cap A} \tau_h(x) g_h(x) \llbracket x \rrbracket\,,
\]
where $\Sigma = \{x_{i}\}_{i \in \mathbb{N}}$ and $\Sigma_{h} = \{x_{i}^{h}\}_{h \in \mathbb{N}}$ are countable sets, and $\tau,\tau_h \in \{-1,1\}$.

Fix $\e > 0$, and let $N=N(\e) \in \Na$ be such that
\begin{equation} \label{cases}
\begin{split}
\E_{0}(R \trace A) - \sum_{i=1}^{N} \cost(g(x_i)) \leq \e \quad &\mbox{if } \E_{0}(R \trace A) < \infty\,, \\
\sum_{i=1}^{N} \cost(g(x_i)) \geq \frac{1}{\e} \quad &\mbox{otherwise}\,.
\end{split}
\end{equation}

By the properties of the cost functions $\cost$, for every $i \in \{1, \dots, N \}$ there exists a number $\eta_{i} = \eta_{i}(\e,\cost(g(x_i))) > 0$ such that
\begin{equation} \label{cost_lsc}
\cost(g) \geq (1 - \e) \cost(g(x_i))
\end{equation}
whenever $\| g - \tau(x_i) g(x_i) \| \leq \eta_i$. Also, let $0 < r_{i} < \min\{\dist(x_i, \partial A),1 \}$ be such that the balls $B(x_i,r_i)$ are mutually disjoint and moreover
\begin{equation} \label{triangle_first}
\left\| \tau(x_i)g(x_i) - \sum_{x \in \Sigma \cap B(x_i,\rho)} \tau(x) g(x) \right\| \leq \frac{\eta_i}{2} \quad \mbox{ for every } \rho \leq r_i\,.
\end{equation}

Set $\eta_{0} := \min_{1 \leq i \leq N} \eta_i$ and $r_0 := \min_{1 \leq i \leq N} r_i$. We claim the following: there exist $h_{0} \in \Na$ and $\rho_i \in \left( \frac{r_0}{2}, r_0 \right)$ such that
\begin{equation}
\flat((R - R_h) \trace B(x_i,\rho_i)) \leq \frac{\eta_0}{2} \quad \mbox{for every } h \geq h_0\,.
\end{equation}

In order to see this, let $h_0 \in \Na$ be such that 
\[
\flat(R - R_h) \leq \frac{\eta_0 r_0}{16(1 + C_{1,1})} \quad \mbox{ for every } h \geq h_0\,,
\]
where $C_{1,1}$ is the constant from formula \eqref{slicing_coarea}.

Then, by \eqref{flat_rect} there are $S_{h} \in \Rc_{0}^{\G}(K)$ and $Z_{h} \in \Rc_{1}^{\G}(K)$ such that
\[
R - R_{h} = S_{h} + \partial Z_{h}\,, \quad \mbox{ and } \quad \mass(S_{h}) + \mass(Z_{h}) \leq \frac{\eta_0 r_0}{8(1 + C_{1,1})}\,.
\]
  
Observe that also $\partial Z_{h} \in \Rc_{0}^{\G}(K)$. Hence, for a.e. $\rho \in \left( \frac{r_0}{2}, r_0 \right)$ we can use equations \eqref{sum_restriction} and \eqref{slicing_formula} to write
\begin{equation} \label{flat_decomposition}
\begin{split}
(R - R_{h}) \trace B(x_i,\rho) &= S_{h} \trace B(x_i,\rho) + (\partial Z_{h}) \trace B(x_i,\rho) \\
&= S_{h} \trace B(x_i,\rho) - \langle Z_{h}, {\rm d}_{i}, \rho \rangle + \partial (Z_{h} \trace B(x_i,\rho))\,,
\end{split}
\end{equation}
where ${\rm d}_{i}(y) := \abs{y - x_i}$.

Since, by the slicing coarea formula \eqref{slicing_coarea}
\[
\int_{\frac{r_0}{2}}^{r_0} \mass(\langle Z_{h}, {\rm d}_i, \rho \rangle) \, d\rho \leq C_{1,1}\, \mass(Z_{h} \trace (B(x_i,r_0) \setminus B(x_i,\frac{r_0}{2}))) \leq \frac{\eta_0 r_0}{8}\,,
\]
it immediately follows from Fatou's lemma that there exists $\rho_{i} \in \left( \frac{r_0}{2}, r_0 \right)$ such that
\begin{equation}
\liminf_{h \to \infty} \mass(\langle Z_{h}, {\rm d}_i, \rho_i \rangle) \leq \frac{\eta_0}{4}\,,
\end{equation}
and thus \eqref{flat_decomposition} implies that for every $h \geq h_0$, up to possibly passing to a subsequence,
\begin{equation} \label{equazione 915}
\begin{split}
\flat((R-R_h) \trace B(x_i,\rho_i)) &\leq \mass(S_{h} \trace B(x_i,\rho_i)) + \mass(\langle Z_{h}, {\rm d}_i, \rho_i \rangle) + \mass(Z_{h} \trace B(x_i,\rho_i)) \\ &\leq \frac{\eta_0 r_0}{8} + \frac{\eta_0}{4} < \frac{\eta_0}{2}\,.
\end{split}
\end{equation}
Notice that it is possible to take one subsequence such that \eqref{equazione 915} holds for every $i$, since the index $i$ ranges in a finite set.

Invoking Proposition \ref{augmentami_ti_prego}$(iii)$, and denoting $\chi \colon \F_{0}^{\G}(K) \to \G$ the \emph{augmentation homomorphism}, we have that for $h \geq h_0$
\begin{equation} \label{augmentation norm estimate}
\| \chi((R-R_h) \trace B(x_i,\rho_i)) \| \leq \flat((R - R_h) \trace B(x_i,\rho_i)) \leq \frac{\eta_0}{2}\,.
\end{equation} 

On the other hand, by Proposition \ref{augmentami_ti_prego}$(i)$ we also see that
\begin{equation} \label{triangle_second}
\chi((R-R_h) \trace B(x_i,\rho_i)) = \sum_{x \in \Sigma \cap B(x_i,\rho_i)} \tau(x) g(x) - \sum_{x \in \Sigma_h \cap B(x_i,\rho_i)} \tau_{h}(x) g_{h}(x)\,.
\end{equation}

Together, equations \eqref{triangle_first}, \eqref{augmentation norm estimate}, and \eqref{triangle_second} imply that 
\begin{equation}
\left\| \tau(x_i) g(x_i) - \sum_{x \in \Sigma_{h} \cap B(x_i,\rho_i)} \tau_{h}(x) g_{h}(x) \right\| \leq \eta_i \quad \mbox{ for every } h \geq h_0\,.
\end{equation}

By \eqref{cost_lsc} and using that the cost function $\cost$ is even, subadditive and lower semi-continuous (and, thus, countably subadditive), we can therefore conclude that
\begin{equation}
\cost(g(x_i)) \leq \frac{1}{1-\e} \cost\left( \sum_{x \in \Sigma_{h} \cap B(x_i,\rho_i)} \tau_{h}(x) g_{h}(x) \right) \leq \frac{1}{1-\e} \sum_{x \in \Sigma_{h} \cap B(x_i,\rho_i)} \cost(g_{h}(x))\,.
\end{equation}

Summing over $i \in \{1,\dots,N\}$ and using that the balls $B(x_i,\rho_i)$ are pairwise disjoint, we obtain that for every $h \geq h_0$
\begin{equation}
\sum_{i=1}^{N} \cost(g(x_i)) \leq \frac{1}{1-\e} \E_{0}(R_{h} \trace A)\,.
\end{equation}

Passing to the $\liminf$ as $h \uparrow \infty$ and using the fact that $\e$ was arbitrary, this allows to conclude equation \eqref{semi-continuity} when $k=0$ in both cases considered in \eqref{cases}. 

Now, we turn our attention to the case $k \geq 1$. Without loss of generality, let us assume that
\[
\lim_{h \to \infty} \E_{0}(R_{h} \trace A) = \liminf_{h \to \infty} \E_{0}(R_{h} \trace A)\,.
\]

By the slicing coarea formula for the flat norm in \cite[Theorem 5.2.1(4)]{DPH_advanced}, for every plane $V \in {\rm Gr}(n,k)$ one has
\begin{equation} \label{slicing_coarea_flat}
\int_{\R^{k}} \flat(\langle R - R_{h}, \p_V, y \rangle)\, dy \leq C_{k} \flat(R - R_h)\,.
\end{equation}

By integrating equation \eqref{slicing_coarea_flat} in the variable $V \in {\rm Gr}(n,k)$ with respect to the Haar measure $\gamma_{n,k}$ on ${\rm Gr}(n,k)$, and taking the limit as $h \uparrow \infty$, we see that
\begin{equation}
\lim_{h \to \infty} \int_{{\rm Gr}(n,k) \times \R^{k}} \flat(\langle R - R_{h}, \p_V, y \rangle) \, d(\gamma_{n,k} \otimes \Ha^{k})(V,y) = 0\,.
\end{equation}

Thus, along a subsequence (not relabeled), we can conclude that
\begin{equation}
\lim_{h \to \infty} \flat(\langle R - R_{h}, \p_V, y \rangle) = 0 \quad \mbox{for $\gamma_{n,k} \otimes \Ha^{k}$-a.e. $(V,y) \in {\rm Gr}(n,k) \times \R^k$}\,. 
\end{equation}

By \eqref{sum_slicing}, from this it follows that $\langle R_{h}, \p_V,y \rangle$ $\flat$-converges to $\langle R, \p_V, y \rangle$ for $\gamma_{n,k} \otimes \Ha^{k}$-a.e. $(V,y) \in {\rm Gr}(n,k) \times \R^k$. Then, the result for $k = 0$ and \eqref{restr_slicing} yield
\begin{equation}
\E_{0}(\langle R \trace A, \p_V, y \rangle) \leq \liminf_{h \to \infty} \E_{0}(\langle R_{h} \trace A, \p_V, y \rangle) \quad \mbox{for $\gamma_{n,k} \otimes \Ha^{k}$-a.e. $(V,y) \in {\rm Gr}(n,k) \times \R^k$}\,.
\end{equation} 

We conclude the proof by applying twice the integral-geometric identity \eqref{e:int_geom}. Indeed, we easily have:
\begin{equation}
\begin{split}
\E_{0}(R \trace A) &= c \int_{{\rm Gr}(n,k) \times \R^k} \E_{0}(\langle R \trace A, \p_V, y \rangle) \, d(\gamma_{n,k} \otimes \Ha^k)(V,y) \\
&\leq c \int_{{\rm Gr}(n,k) \times \R^k} \liminf_{h \to \infty} \E_{0}(\langle R_h \trace A, \p_V, y \rangle) \, d(\gamma_{n,k} \otimes \Ha^k)(V,y) \\
&\leq \liminf_{h \to \infty} \left( c \int_{{\rm Gr}(n,k) \times \R^k} \E_{0}(\langle R_h \trace A, \p_V, y \rangle) \, d(\gamma_{n,k} \otimes \Ha^k)(V,y) \right) \\
&= \liminf_{h \to \infty} \E_{0}(R_{h} \trace A)\,.
\end{split}
\end{equation} 
\end{proof}

The second ingredient needed for the proof of Theorem \ref{reprect} is the following technical lemma. Here, we shall adopt the following notation. Let $R = \llbracket E, \tau \otimes g \rrbracket$ be a rectifiable $\G$-chain. Let also $x \in E$ be such that ${\rm Tan}(E,x)$ exists. Denote $\pi_{x}$ the affine $k$-plane $\pi_{x} := x + {\rm Tan}(E,x) = x + {\rm span}[\tau(x)]$. Then, for any $r > 0$ we will let $S_{x,r}$ be the rectifiable $\G$-chain defined by $S_{x,r}:= \llbracket \pi_{x} \cap B_{r}(x), \tau(x) \otimes g(x) \rrbracket$: this is the chain supported on the disc $\pi_{x} \cap B_{r}(x)$ with orientation $\tau(x)$ and constant density $g(x) \in \G$. In other words, we may write $S_{x,r} = g(x) \cdot \llbracket \pi_{x} \cap B_{r}(x), \tau(x), 1 \rrbracket$.

\begin{lem}\label{lem:flat_approximation}

Let $R = \llbracket E, \tau \otimes g \rrbracket$ be a $k$-dimensional rectifiable $\G$-chain in $K$, and let $\mu := \| g \|\, \Ha^{k} \trace E$. Then it holds:

\begin{equation} \label{flat_approximation}
\lim_{r \to 0^{+}} \frac{\flat(R \trace B(x,r) - S_{x,r})}{\mass(R \trace B(x,r))} = 0 \qquad \mbox{for $\mu$-a.e. $x$}\,.
\end{equation}

\end{lem}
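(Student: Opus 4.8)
The plan is to establish \eqref{flat_approximation} by blowing $R$ up around a generic point and showing that the rescaled currents converge, in the flat norm, to the flat tangent disc, the mass of the piece at scale $r$ providing the correct normalising factor.

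\medskip

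\noindent\emph{Blow-up and reduction.} For $\mu$-a.e. $x \in E$ the following hold simultaneously: the approximate tangent plane $V_x := \Tan(E,x)$ exists; $\Ha^k(E \cap B(x,r))/(\omega_k r^k) \to 1$ as $r \to 0^+$ (here $\omega_k$ is the volume of the unit $k$-ball); $x$ is a Lebesgue point, with respect to $\Ha^k \trace E$, of both the orienting vectorfield $\tau$ and the multiplicity $g$; and $g(x) \neq 0$. Fix such an $x$, set $\eta_r(y) := (y-x)/r$, and put $R_r := (\eta_r)_\#(R \trace B(x,r)) \in \Rc_k^{\G}(\overline{B(0,1)})$. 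Since $V_x$ is a linear subspace, $(\eta_r)_\# S_{x,r}$ equals the fixed disc $D := \llbracket V_x \cap B(0,1),\, \tau(x) \otimes g(x) \rrbracket$. Because $\eta_r^{-1}$ is $r$-Lipschitz, push-forward multiplies the mass of $j$-currents by at most $r^j$; comparing the flat decompositions \eqref{flat_rect} of $R_r - D$ and of its $(\eta_r^{-1})_\#$-image gives $\flat(R \trace B(x,r) - S_{x,r}) \le r^k\, \flat(R_r - D)$ for $r \le 1$. On the other hand $\mass(R \trace B(x,r)) = \mu(B(x,r))$, and $\mu(B(x,r))/(\omega_k r^k) \to \|g(x)\| > 0$ by the density and Lebesgue-point properties. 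Hence it suffices to prove $\flat(R_r - D) \to 0$ as $r \to 0^+$. (The cases $k=0$, where $R \trace B(x,r) = S_{x,r}$ for $r$ small, and $k=n$, where $\flat = \mass$ on $n$-currents so the claim is just the Lebesgue-point property of $g$, are degenerate; assume $1 \le k < n$.)

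\medskip

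\noindent\emph{Projection onto the tangent plane.} Let $\p := \p_{V_x}$ and let $h(t,y) := (1-t)\,y + t\,\p(y)$ be the affine homotopy between the identity and $\p$. By the homotopy formula $R_r = \p_\# R_r - \mathrm{Cyl}(\partial R_r) - \partial\, \mathrm{Cyl}(R_r)$, where $\mathrm{Cyl}(T) := h_\#(\llbracket 0,1 \rrbracket \times T)$; since $D$ is supported on $V_x$, where $h$ is stationary, and $\p_\# D = D$, one gets
\[
\flat(R_r - D) \;\le\; \mass(\mathrm{Cyl}(R_r)) \,+\, \flat(\p_\# R_r - D) \,+\, \flat(\mathrm{Cyl}(\partial R_r))\,.
\]
We claim each term tends to $0$. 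For the first, the area formula gives $\mass(\mathrm{Cyl}(R_r)) \le \int_{\spt R_r} \dist(\cdot, V_x)\, d\|R_r\| = r^{-k-1} \int_{E \cap B(x,r)} \dist(y, \pi_x)\, \|g(y)\|\, d\Ha^k(y)$, and testing the weak-$^*$ convergence defining the approximate tangent plane against $\dist(\cdot, V_x)$ (which vanishes on $V_x$), together with the Lebesgue-point property of $g$, shows this integral is $o(r^{k+1})$. For the second term, one observes first that $\mass(R_r) \to \omega_k \|g(x)\| = \mass(D)$; the Lebesgue-point property of $\tau$ makes the Jacobian of $\p$ along $E$ near $x$ converge to $1$ in average, so by the area formula $\p$ is asymptotically injective on $\spt R_r$ (no ``folding'' or cancellation of multiplicities), and then the Lebesgue-point property of $g$ forces the multiplicity of $\p_\# R_r$ to converge in $L^1(V_x)$ to $g(x)\, \mathbf 1_{B(0,1) \cap V_x}$; hence $\mass(\p_\# R_r - D) \to 0$, and a fortiori $\flat(\p_\# R_r - D) \to 0$.

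\medskip

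\noindent\emph{The boundary term, and the main difficulty.} Controlling $\flat(\mathrm{Cyl}(\partial R_r))$ is the crux: since $R$ is merely rectifiable, $\partial R_r$ need not be normal, so the classical homotopy estimate $\flat(\p_\# T - T) \le C\, \|\dist(\cdot, V_x)\|_{L^\infty(\spt T)}(\mass(T) + \mass(\partial T))$ is not directly available. The plan is to replace $R_r$ by $R_r \trace \{\dist(\cdot, V_x) \le \delta\}$, at the cost of the term $\mass(R_r \trace \{\dist(\cdot, V_x) > \delta\})$, which tends to $0$ for each fixed $\delta$ by the same mass-concentration near $V_x$ exploited above, so that all the relevant cylinders become $\delta$-thin. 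The boundary of the truncated current is, by the slicing formula \eqref{slicing_formula}, governed by the transversal slice $\langle R_r, \dist(\cdot, V_x), \delta \rangle$; using the coarea estimate \eqref{slicing_coarea} and the fact that $\mass(R_r \trace \{\delta/2 \le \dist(\cdot, V_x) \le \delta\}) \to 0$, one selects (along a subsequence in $r$, by a diagonal argument) a cut-off level in $(\delta/2, \delta)$ at which this slice has vanishing mass. Running the homotopy estimate on the flat-norm-bounded pieces of the resulting decomposition — each passage gaining a factor $\lesssim \delta$ while the masses remain controlled by $\flat(R_r) \le \mass(R_r) \lesssim 1$ — yields $\flat(\mathrm{Cyl}(\partial R_r)) \le C\delta$ for $r$ small. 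Letting first $r \to 0$ and then $\delta \to 0$ completes the proof. I expect this last step — turning the pointwise regularity ($\tau$ and $g$ being approximately continuous) into genuine \emph{flat}-norm control of the blow-ups, rather than mere weak or mass control, and in particular taming the cylinder over the boundary of a non-normal current — to be the delicate part of the argument.
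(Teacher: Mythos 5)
Your blow-up reduction, the estimate of the cylinder $h_{\sharp}(\llbracket 0,1\rrbracket\times R_r)$, and the mass convergence $\p_{\sharp}R_r\to D$ are all fine, but the step you yourself flag as the crux --- controlling $\flat\bigl(h_{\sharp}(\llbracket 0,1\rrbracket\times\partial R_r)\bigr)$ --- is a genuine gap, and the truncation-plus-slicing fix you sketch does not close it. The problem is that $R$ is only rectifiable, so $\partial R_r$ is a flat chain of possibly \emph{infinite mass}; restricting $R_r$ to a slab $\{\dist(\cdot,V_x)\le\delta\}$ does not tame this object, because $\partial R_r$ is not supported away from the slab and the "restriction'' $(\partial R_r)\trace\{\dist(\cdot,V_x)\le\delta\}$ appearing in \eqref{slicing_formula} is not even defined (the formula is stated only for $\partial T$ rectifiable). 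Choosing a good level $\delta'$ where the transversal slice has small mass removes only the slice contribution, not the boundary of $R_r$ itself. Moreover, the quantitative homotopy estimate you invoke ("each passage gaining a factor $\lesssim\delta$ with masses controlled by $\flat(R_r)$'') is exactly the estimate $\flat(f_{\sharp}T-g_{\sharp}T)\le C\|f-g\|_\infty(\mass(T)+\mass(\partial T))$, which needs $T$ normal; if instead you write $\partial R_r=A+\partial B$ with $\mass(A)+\mass(B)\approx\flat(\partial R_r)$ and push the pieces through the homotopy, the endpoint terms $(h(1,\cdot))_{\sharp}B-(h(0,\cdot))_{\sharp}B$ reproduce the same problem one dimension up (they require $\mass(\partial B)$), so the recursion never terminates and no rate in $\delta$ comes out. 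Soft continuity of push-forward in the flat norm gives no rate uniform in $r$, which is what you need since $R_r$ varies.

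The paper avoids this obstruction by never running a homotopy on $R$ (or its blow-ups) at all. It decomposes $E$ into countably many Lipschitz/$C^1$ graphs $\Sigma_i={\rm Gr}(f_i)$, reduces to finitely many of them up to mass $\e$, and at a Lebesgue point $x$ of the multiplicity compares $R\trace B(x,\rho)$ \emph{in mass only} to the graphical current $\tilde R_i:=(F_i)_{\sharp}S_{x,\rho}$ with constant coefficient $g(x)$; the homotopy formula is then applied to $\tilde R_i-S_{x,\rho}$, i.e.\ to the push-forward of the flat disc $S_{x,\rho}$, which \emph{is} a normal current with $\mass(\partial S_{x,\rho})=\|g(x)\|\,k\omega_k\rho^{k-1}$, so the boundary term in the homotopy estimate is harmless and yields the factor $\e\rho$ from the $C^1$ flatness of the graph. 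In other words: rather than projecting the bad current onto the plane, one lifts the good disc onto the graph. If you want to keep your blow-up framework, the repair is to insert this graphical intermediate current between $R_r$ and $D$; as written, the projection route cannot be completed.
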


In the proof of the above lemma, we are going to need a suitable version of the classical Lebesgue points theorem (see e.g. \cite[Corollary 2.23]{AFP}) adapted to the framework of $\G$-valued maps. In fact, the same proof can be extended to the case when the target is an arbitrary (possibly not complete) metric space $(X, {\rm d})$, hence we state the result (Proposition \ref{metric_lebesgue} below) under this more general assumption. We shall need a few preliminaries concerning metric space-valued maps. Let $\Omega \subset \R^n$ be an open set, let $(X, {\rm d})$ be a metric space, and let $f \colon \Omega \to X$. If $\mu$ is a positive finite Borel measure on $\Omega$, and $f$ is (Borel) measurable, we say that $f$ is $\mu$-integrable, and we write $f \in L^1(\Omega, X; \mu)$ provided 
\begin{equation} \label{weak integrability}
\mbox{there exists $p \in X$ such that} \quad \int_{\Omega} {\rm d}(f(x),p) \, d\mu(x) < \infty \,.
\end{equation}
Observe that, since $\mu(\Omega) < \infty$, the condition \eqref{weak integrability} is in fact equivalent to the stronger
\begin{equation} \label{strong integrability}
\int_{\Omega} {\rm d}(f(x),p) \, d\mu(x) < \infty \qquad \mbox{for every $p \in X$}\,.
\end{equation} 

Next, we recall the following elementary fact, known in the literature as \emph{Kuratowski's embedding} (see e.g. \cite{Hainon}): every metric space $(X,{\rm d})$ embeds isometrically into the Banach space $L^\infty(X)$ of bounded functions $\varphi \colon X \to \R$ endowed with the sup-norm $\| \varphi \|_\infty := \sup\left\lbrace |\varphi(p)| \, \colon \, p \in X \right\rbrace$. Such an embedding can be easily obtained by fixing a point $p_0 \in X$ and associating, to every $p \in X$, the function $\varphi_p \in L^\infty(X)$ defined by
\[
\varphi_p(q) := {\rm d}(q,p) - {\rm d}(q,p_0) \qquad \mbox{for every $q \in X$}\,.
\]
Notice that the embedding is not canonical, since it depends on the choice of $p_0$. If $f \colon \Omega \to X$ is as above, and if $\Phi$ denotes a Kuratowski embedding (that is, $\Phi(p)=\varphi_p$ as above), then $F := \Phi \circ f$ maps $\Omega$ into the Banach space $B=L^\infty(X)$, and $f \in L^1(\Omega,X;\mu)$ if and only if
\begin{equation} \label{integrability Banach}
\int_{\Omega} \| F(x) - \Phi(p) \|_{\infty} \, d\mu(x) < \infty \qquad \mbox{for every $p \in X$}\,,
\end{equation}
or, equivalently, if and only if
\begin{equation} \label{classical integrability Banach}
\int_{\Omega} \|F(x)\|_\infty \, d\mu(x) < \infty\,.
\end{equation}

Finally, we recall a few notions concerning Banach space-valued maps. If $(B, \| \cdot \|_B)$ is a (real) Banach space, a map $s \colon \Omega \to B$ is \emph{simple} if there exist $N \in \mathbb{N}$, Borel sets $E_{1}, \ldots, E_N \subset \Omega$, and $\varphi_1, \ldots, \varphi_N \in B$ such that
\[
s(x) = \sum_{i=1}^N \varphi_i \, {\rm 1}_{E_i}(x) \qquad \forall\, x\in \Omega\,,
\] 
where ${\rm 1}_E$ is the indicator function of $E$. A function $F \colon \Omega \to B$ is called:
\begin{itemize}

\item \emph{weakly $\mu$-measurable} if for every $\varphi^* \in B^*$ the (real-valued) function $x \in \Omega \mapsto \langle \varphi^* , F(x) \rangle$ is measurable;

\item \emph{strongly $\mu$-measurable} if there exists a sequence $s_\ell$ of simple functions $s_\ell \colon \Omega \to B$ such that $\lim_{\ell \to \infty} \| F(x) - s_\ell(x)\|_B = 0$ for $\mu$-a.e. $x \in \Omega$;

\item \emph{almost separably-valued} if there exists a set $Z_0 \subset \Omega$ with $\mu(Z_0)=0$ such that $F(\Omega \setminus Z_0) = \{F(x) \, \colon \, x \in \Omega \setminus Z_0\} \subset B$ is separable.

\end{itemize}

The following theorem, known in the literature as Pettis' measurability theorem, establishes the fundamental relationship between the three notions just introduced, providing a necessary and sufficient condition for a Banach space-valued function $F$ to be strongly $\mu$-measurable.

\begin{theorem}[Pettis' measurability theorem, see {\cite[Theorem 2 in Chapter II]{DiestelUhl}}]
A map $F \colon \Omega \to B$ is strongly $\mu$-measurable if and only if it is weakly $\mu$-measurable and almost separably-valued.
\end{theorem}

We are now ready to state and prove the anticipated Lebesgue point theorem for metric space-valued maps.

\begin{prop}[Lebesgue points theorem for metric space-valued maps] \label{metric_lebesgue}

Let $\mu$ be a positive finite Borel measure in an open set $\Omega \subset \R^{n}$, let $\left(X, {\rm d} \right)$ be a metric space, let $\Phi$ be a Kuratowski embedding of $X$ into $L^\infty(X)$, and let $f \in L^{1}(\Omega,X;\mu)$ be a $\mu$-integrable function such that $F := \Phi \circ f$ is strongly $\mu$-measurable. Then, for $\mu$-a.e. point $x$ the following holds:
\begin{equation} \label{eq:metric Lebesgue}
\lim_{r \to 0} \frac{1}{\mu(B(x,r))} \int_{B(x,r)} {\rm d}(f(y), f(x)) \, d\mu(y) = 0\,.
\end{equation}

\end{prop}

\begin{proof}

Since $\Phi$ is an isometry, the conclusion, formula \eqref{eq:metric Lebesgue}, is equivalent to 

\begin{equation} \label{eq: banach Lebesgue}
\lim_{r \to 0} \frac{1}{\mu(B(x,r))} \int_{B(x,r)} \| F(y) - F(x) \|_\infty \, d\mu(y) = 0 \qquad \mbox{for $\mu$-a.e. $x \in \Omega$}\,.
\end{equation}

Since $F$ is strongly $\mu$-measurable, we can apply Pettis' measurability theorem to find a set $Z_0 \subset \Omega$ with $\mu(Z_0) = 0$ such that $Y := F(\Omega \setminus Z_0) \subset B=L^\infty(X)$ is separable. Let $\{ \varphi_{i} \}_{i \in \Na}$ be a dense set in $Y$. For every $i \in \Na$, consider the function $x \in \Omega \setminus Z_0 \mapsto \|F(x) - \varphi_i\|_\infty$. Since this function is $\mu$-integrable because $f \in L^1(\Omega,X;\mu)$ (cf. \eqref{classical integrability Banach}), by the classical Lebesgue points theorem there exists a set $Z_{i}$ with $\mu(Z_{i}) = 0$ such that whenever $x \in \Omega \setminus \left( Z_0 \cup Z_{i} \right)$ it holds
\begin{equation} \label{eq:standard_lebesgue}
\|F(x) - \varphi_i\|_\infty = \lim_{r \to 0} \frac{1}{\mu(B(x,r))} \int_{B(x,r)} \|F(y) - \varphi_i\|_\infty \, d\mu(y)\,.
\end{equation} 

In particular, setting $Z := Z_0 \, \cup \, \bigcup_{i \in \Na} Z_{i}$ one has that equation \eqref{eq:standard_lebesgue} holds for every $i \in \Na$ whenever $x \in \Omega \setminus Z$. Note that $\mu(Z) = 0$. Fix $\e > 0$. Let $x \in \Omega \setminus Z$, and let $\varphi_{i}$ be such that $\|F(x) - \varphi_i\|_\infty \leq \frac{\e}{2}$. Then, we have by triangle inequality:
\begin{eqnarray*}
0 &\leq & \limsup_{r \to 0^+} \frac{1}{\mu(B(x,r))} \int_{B(x,r)} \|F(y) - F(x)\|_\infty \, d\mu(y)\\
&\leq & \|F(x) - \varphi_i\|_\infty + \limsup_{r \to 0^+} \frac{1}{\mu(B(x,r))} \int_{B(x,r)} \| F(y) - \varphi_i\|_\infty \, d\mu(y) \\
&\overset{\eqref{eq:standard_lebesgue}}{=} & 2 \, \| F(x) - \varphi_i\|_\infty \leq \e\,.
\end{eqnarray*} 

The conclusion, formula \eqref{eq: banach Lebesgue}, readily follows from the arbitrariness of $\e$.
\end{proof}

\begin{proof}[Proof of Lemma \ref{lem:flat_approximation}]

Since $E$ is countably $k$-rectifiable, there exist a set $E_{0}$ with $\Ha^{k}(E_0) = 0$, countably many $k$-dimensional planes $\Pi_{i} \subset \R^n$ and $C^1$ and globally Lipschitz maps $f_{i} \colon \Pi_{i} \to \Pi_{i}^{\perp}$ such that
\begin{equation} \label{rettificabilita}
E \subset E_{0} \cup \bigcup_{i \in \mathbb{N}} {\rm Gr}(f_i)\,.
\end{equation}

Set $\Sigma_{i} := {\rm Gr}(f_i)$ for the graph of $f_i$. For every $x \in \bigcup_{i} \Sigma_i$, let $i(x)$ be the first index $i$ such that $x \in \Sigma_i$. Furthermore, for every $i$, let $g_{i}$ be the $\G$-valued map given by
\[
g_{i}(x) :=
\begin{cases}
g(x) &\mbox{ if $i = i(x)$},\\
0 &\mbox{ otherwise}\,.
\end{cases}
\]

Let us denote $R_{i} := \llbracket E \cap \Sigma_{i}, \tau \otimes g_i \rrbracket$. Observe that without loss of generality we can assume that $\left.\tau\right|_{E \cap \Sigma_{i}}$ coincides with the (continuous) orientation $\xi_{i}$ induced on $\Sigma_{i}$ by the orientation on $\Pi_{i}$ through the map $f_{i}$: indeed, otherwise it suffices to replace $\left.\tau\right|_{E \cap \Sigma_{i}}$ with $\left.\xi_{i}\right|_{E \cap \Sigma_{i}}$ and simply change sign to $g$ on the (measurable) set where $\tau \neq \xi_{i}$.

From the definition of $g_i$ it follows that $R = \sum_{i} R_{i}$ and $\mass(R) = \sum_{i} \mass(R_i)$. Hence, for any fixed $\e > 0$ there exists $N = N(\e) \in \mathbb{N}$ such that
\begin{equation} \label{zavorra}
\sum_{i \geq N+1} \mass(R_i) \leq \e\,.
\end{equation}

Define the set $E' \subset E$ by
\begin{equation} \label{insieme_buono}
\begin{split}
E' := \bigg\lbrace x \in E \cap \bigcup_{i=1}^{N} \Sigma_{i} &\quad \mbox{such that $x$ is a Lebesgue point of $g_{i}$} \\
&\mbox{with respect to $\Ha^{k} \trace \Sigma_{i}$ for every $i \in \{1,\dots, N\}$} \bigg\rbrace\,.
\end{split}
\end{equation}

In other words, $E'$ consists of all points $x \in E \cap \bigcup_{i=1}^{N} \Sigma_{i}$ such that
\[
\lim_{r \to 0} \frac{1}{\Ha^{k}(\Sigma_i \cap B(x,r))} \int_{\Sigma_{i} \cap B(x,r)} \| g_{i}(y) - g_{i}(x) \|\, d\Ha^{k}(y) = 0 \quad \mbox{for $i=1,\dots,N$}\,.
\]

Observe that $\mass(R \trace (E \setminus E')) \leq \e$ because of \eqref{rettificabilita}, \eqref{zavorra} and Proposition \ref{metric_lebesgue} applied with $X = \G$ (endowed with the natural metric ${\rm d}(g,h) = \|g - h\|$), $f=g_i$, and $\mu=\Ha^k \mres (E \cap \Sigma_i)$. Notice that, if $\Phi$ denotes a Kuratowski embedding of ${\rm G}$ into $L^\infty(\G)$ then the map $F=\Phi \circ g_i$ is strongly $\Ha^k \mres (E \cap \Sigma_i)$-measurable. In order to see this, let first $\{I_{i}^j\}_{j=1}^\infty$ be a sequence of $k$-dimensional Lipschitz $\G$-chains such that $\mass(R_i - I_{i}^j) \to 0$ as $j \to \infty$. If $\tau_{i}^j \otimes g_i^j$ are the $\G$-orientations of $I_i^j$, and if we fix a choice of representative of the equivalence class such that $\tau_i^j=\tau_i$ $\Ha^k$-a.e. where they are both defined, then the convergence in mass implies, through \eqref{the mass of rectifiable chains}, that $\|g_i - g_{i}^j\| \to 0$ in $L^1(\Ha^k \mres (E \cap \Sigma_i))$ as $j \to \infty$. Notice that each map $g_i^j$ takes at most countably many distinct values in $\G$, obtained as finite sums of elements in the $\mathbb Z$-orbit of the (finitely many) coefficients in $\G$ appearing in the definition of $I_i^j$. Hence, since $\mass(I_i^j) < \infty$, this in turn implies that each $g_i^j$ is an $L^1$-limit of \emph{simple} functions, so that, in particular, $\|g_i - s_{i}^j\| \to 0$ in $L^1(\Ha^k \mres (E \cap \Sigma_i))$ as $j \to \infty$ for simple functions $s_i^j$. Passing to subsequences, this gives the pointwise convergence, as $j \to \infty$, of $\|g_i(x) - s_i^j(x)\|$ to zero for $\Ha^k$-a.e. $x \in E \cap \Sigma_i$.

\medskip

Let us now set 
\begin{equation} \label{Lip}
L := \max\lbrace \Lip(f_{i}) \, \colon \, i=1,\dots,N\rbrace.
\end{equation}

Fix $i \in \{1, \dots, N \}$. For every $x \in \Sigma_{i}$ there exists $r > 0$ such that whenever $j \in \{1, \dots, N \}$ is such that $\Sigma_{j} \cap B(x,\sqrt{n} r) \neq \emptyset$, then $x \in \Sigma_{j}$.

Observe now that the definition of $E'$ implies that for any $x \in E'$
\begin{equation} \label{density}
\exists \, \lim_{r \to 0} \frac{\mass(R_{j} \trace (\Sigma_{j} \cap B(x,r)))}{\Ha^{k}(\Sigma_{j} \cap B(x,r))} = \|g_{j}(x)\| \quad \mbox{for every $j=1,\dots,N$}\,.
\end{equation}

Now, fix any point $x \in E'$, and consider any index $j \in \{ 1, \dots, N\}$ such that $x \in \Sigma_{j}$. When $j = i(x)$, then $g_{j}(x) = g(x)$, with $\|g(x)\| > 0$. In particular, \eqref{density} implies that when $j = i(x)$ there exists $r > 0$ such that for any $0 < \rho \leq \sqrt{n} r$ 
\begin{equation} \label{density2}
\frac{\mass(R_{j} \trace (\Sigma_{j} \cap B(x,\rho)))}{\Ha^{k}(\Sigma_{j} \cap B(x,\rho))} \geq \frac{\|g_{j}(x)\|}{2} > 0.
\end{equation}
Again by Proposition \ref{metric_lebesgue} applied with $\mu = \Ha^{k} \trace \Sigma_{j}$, $X = \G$, and $f = g_{j}$, there exists a radius $r > 0$ (depending on $x$) such that 
\begin{equation} \label{Lebesgue_pt}
\begin{split}
\int_{\Sigma_{j} \cap B(x,\rho)} \|g_{j}(y) - g_{j}(x)\| \, d\Ha^{k}(y) &\leq \e \frac{\|g_{j}(x)\|}{2} \Ha^{k}(\Sigma_{j} \cap B(x,\rho)) \\
&\leq \e \frac{\mass(R_{j} \trace (\Sigma_{j} \cap B(x,\rho)))}{\Ha^{k}(\Sigma_{j} \cap B(x,\rho))} \Ha^{k}(\Sigma_{j} \cap B(x,\rho)) \\
&\leq \e \mass(R_{j} \trace B(x,\rho)),
\end{split}
\end{equation} 
for every $0 < \rho \leq \sqrt{n} r$.

If, instead, $j \neq i(x)$, then $g_{j}(x) = 0$ and therefore there exists a radius $r > 0$ (depending on $x$) such that for every $0 < \rho \leq \sqrt{n} r$
\begin{equation} \label{altro_caso}
\begin{split}
\int_{\Sigma_{j} \cap B(x,\rho)} \|g_{j}(y)\| \, d\Ha^{k}(y) &\leq \frac{\e \|g_{i(x)}(x)\|}{N(1+L)^{k}} \Ha^{k}(\Sigma_{j} \cap B(x,\rho)) \\
&\leq \frac{\e}{N} \|g_{i(x)}(x)\| \omega_{k} \rho^{k} \\
&\overset{\eqref{density2}}{\leq} 2 (1+L)^k \frac{\e}{N} \mass(R_{i(x)} \trace B(x,\rho)),
\end{split}
\end{equation}
where $\omega_{k}$ denotes the volume of the unit ball in $\R^{k}$. Notice that both the second and third inequalities in \eqref{altro_caso} use the fact that the surfaces $\Sigma_j$ are graphs of $L$-Lipschitz functions (see \eqref{Lip}), so that
\[
\frac{\omega_k \rho^k}{(1+L)^k}\leq\Ha^k(\Sigma_j \cap B(x,\rho)) \leq (1 + L)^k\, \omega_k \rho^k\,.
\]

Now, let $i = i(x)$. By representing $f_i(\Pi_i) \cap B(x,r)$ locally as the graph of a $C^1$ function (still denoted $f_i$) from the $k$-plane tangent to $\Sigma_{i}$ at $x$ (still denoted $\Pi_i$), translating and tilting such a plane, we can assume that $x = 0$,
 $\Pi_{i} = \lbrace x_{k+1} = \dots = x_{n} = 0 \rbrace$, $f_i(x)=0$, and $\nabla f_{i}(x) = 0$. By possibly choosing a smaller radius $r = r(x) > 0$, we may also assume that 
\begin{equation} \label{poco_tilt}
|\nabla f_{i}| \leq \e \quad \mbox{in } \Pi_{i} \cap B(x,r).
\end{equation}

With these conventions, for $\rho$ suitably small the chain $S_{x,\rho}$ reads $S_{x,\rho} = \llbracket B(0,\rho) \cap \Pi_{i}, \tau(0) \otimes g_i(0) \rrbracket$. We let $F_{i} \colon \Pi_{i} \times \Pi_{i}^{\perp} \to \R^{n}$ be given by $F_{i}(z,w) := \left( z, f_{i}(z) \right)$, and we set $\tilde{R}_{i} := (F_{i})_{\sharp} S_{x,\rho} \in \Rc_{k}^{\G}(K)$.  

Observe that $S_{x,\rho} = g_{i}(0) \cdot \tilde{S}_{x,\rho}$, with $\tilde{S}_{x,\rho} := \llbracket B(0,\rho) \cap \Pi_i, \tau(0), 1 \rrbracket$, and $\tilde{R}_i = g_i(0) \cdot (F_i)_{\sharp} \tilde{S}_{x,\rho}$. Thus, by the standard homotopy formula for classical currents (cf. \cite[26.22]{Simon1983}), we deduce that

\begin{equation}
\tilde{R}_{i} - S_{x,\rho} = \partial h_{\sharp}(\llbracket (0,1) \rrbracket \times S_{x, \rho}) - h_{\sharp}(\llbracket (0,1) \rrbracket \times \partial S_{x,\rho})\,,   
\end{equation}
where $h \colon (0,1) \times \Pi_{i} \times \Pi_{i}^{\perp} \to \R^n$ is the affine homotopy defined by $h(t,z,w) := t F_{i}(z,w) + (1-t) (z,0)$.

Hence, if we denote $C(x,\rho) := (B(x,\rho) \cap \Pi_{i}) \times \Pi_{i}^{\perp}$, and assume without loss of generality that $K \supset {\rm spt}(\llbracket (0,1) \rrbracket \times S_{x,\rho})$, we have by \cite[Formula 26.23]{Simon1983}
\begin{eqnarray}
\flat(\tilde{R}_{i} - S_{x,\rho}) &\leq & \mass(h_{\sharp}(\llbracket (0,1) \rrbracket \times \partial S_{x,\rho})) + \mass(h_{\sharp}(\llbracket (0,1) \rrbracket \times S_{x,\rho}))\nonumber\\
&\leq & C \| F_{i} - ({\rm id},0) \|_{L^{\infty}(C(x,\rho))} \left( \mass(S_{x, \rho}) + \mass(\partial S_{x, \rho}) \right) \nonumber\\
&\overset{\eqref{poco_tilt}}{\leq} & C \e \rho \left( \mass(S_{x,\rho}) + \mass(\partial S_{x,\rho}) \right)  \nonumber\\
&\leq & C \e \|g(x)\| \omega_{k} \rho^{k} \nonumber\\
&\leq & C \e \|g(x)\| \Ha^{k}(\Sigma_{i} \cap B(x,\rho)) \nonumber\\
&\overset{\eqref{density2}}{\leq} & C \e \mass(R_{i} \trace B(x,\rho)).\label{stima1}
\end{eqnarray}

Now, recall that we can assume that the orientation $\tau$ coincides on $E \cap \Sigma_{i}$ with the continuous orientation $\xi_{i}$ of $\Sigma_{i}$ induced by the orientation of $\Pi_{i} \times \Pi_{i}^{\perp}$ via $F_{i}$. Hence, the rectifiable chain $\tilde{R}_{i}$ reads $\tilde{R}_{i} = \llbracket \Sigma_{i} \cap C(x,\rho), \tau \otimes g_i(x) \rrbracket = g_{i}(x) \cdot \llbracket \Sigma_{i} \cap C(x,\rho), \tau,1 \rrbracket$ (cf. \cite[27.2]{Simon1983}). Therefore, we can compute:
\begin{eqnarray}
\mass(R_{i} \trace B(x,\rho) - \tilde{R}_{i}) &\leq & \mass(R_{i} \trace B(x,\rho) - \tilde{R}_{i} \trace B(x,\rho)) + \mass(\tilde{R}_{i} \trace (C(x,\rho) \setminus B(x,\rho))) \nonumber\\
&\overset{\eqref{Lebesgue_pt}}{\leq}&  \e \mass(R_{i} \trace B(x,\rho)) + \mass(\tilde{R}_{i} \trace (C(x,\rho) \setminus B(x,\rho))) \nonumber\\
&\overset{\eqref{poco_tilt}}{\leq}& \e \mass(R_{i} \trace B(x,\rho)) + C \e \|g_{i}(x)\| \Ha^{k}(\Sigma_{i} \cap B(x,\rho)) \nonumber\\
&\overset{\eqref{density2}}{\leq}& C \e \mass(R_{i} \trace B(x,\rho)).\label{stima2}
\end{eqnarray}

Hence, we conclude:
\begin{eqnarray} 
\flat(R \trace E' \cap B(x,\rho) - S_{x,\rho})\!\!\!\!\!\!\!\!\!\!\!\!   & \leq & \!\!\!\!\!\! \flat(R_{i(x)} \trace B(x,\rho) - S_{x,\rho}) + \sum_{\underset{j \neq i(x)}{j=1}}^{N} \mass(R_{j} \trace B(x,\rho)) \nonumber\\
&\overset{\eqref{altro_caso}}{\leq}& \!\!\!\!\!\! \flat(R_{i(x)} \trace B(x,\rho) - \tilde{R}_{i} ) + \flat(\tilde{R}_{i} - S_{x,\rho}) + 2 \e \mass(R_{i(x)} \trace B(x,\rho)) \nonumber\\
&\overset{\eqref{stima1}, \eqref{stima2}}{\leq} & \!\!\!\!\!\! C \e \mass(R \trace B(x,\rho)).\label{stima_fin} 
\end{eqnarray}
This proves the following: for every $\e > 0$ there exists a set $E' \subset E$ with $\mass(R \trace (E \setminus E')) \leq \e$ such that for every $x \in E'$ there exists $r = r(x) > 0$ such that for every $0 < \rho \leq r$
\begin{equation} \label{eps_stima_finale}
\frac{\flat(R \trace (E' \cap B(x,\rho)) - S_{x,\rho})}{\mass(R \trace B(x,\rho))} \leq \e\,.
\end{equation}

Now, in order to conclude we iterate \eqref{eps_stima_finale}. In particular, for every $i \in \mathbb{N}$ let us denote $E_{i}$ the set $E'$ corresponding to the choice $\e := 2^{-i-1}$, and let $F_i\subset E_i$ be the set of Lebesgue points of $\mathbf{1}_{E_i}$ (inside $E_i$) with respect to $\mu = \|g\| \Ha^{k} \trace E$.
By \cite[Corollary 2.23]{AFP}, the set $F_i$ equals the set $E_i$ up to a set of $\mu$-measure $0$; moreover, for every $x\in F_i$ and for $\rho$ sufficiently small (possibly depending on $x$) it holds
\begin{equation*}
\begin{split}
\mass(R \trace B(x,\rho)- R \trace (E_i \cap B(x,\rho))) &= 
 \int_{(E\setminus E_i)\cap B(x,\rho)} \|g\| \, d\Ha^k  
 \\&\leq 2^{-i-1} \int_{E\cap B(x,\rho)} \|g\|\, d\Ha^k = 2^{-i-1} \mass(R \trace B(x,\rho)).
\end{split}
\end{equation*}
Hence by \eqref{eps_stima_finale} for every $x \in F_i$ there exists $r_i(x)>0$ such that for every $0<\rho< r_i(x)$
\begin{equation*}
\begin{split}
\flat(R \trace B(x,\rho) - S_{x,\rho})&\leq \mass(R \trace B(x,\rho) - R \trace (E_i \cap B(x,\rho)))+ \flat(R \trace (E_i \cap B(x,\rho)) - S_{x,\rho})\\
&\leq 2^{-i} \mass(R \trace B(x,\rho))
\end{split}
\end{equation*}
and 
$$\mass(R \trace (E \setminus F_i)) \leq 2^{-i-1}.$$
Denoting $F:= \bigcup_{i\in \Na}\bigcap_{j\geq i}F_j$, and observing that $E \setminus F= E \cap F^c = E \cap \bigcap_{i\in \Na}\bigcup_{j\geq i}F^c_j $ is contained in $\bigcup_{j\geq i}F^c_j$ for every $i\in \Na$, we have
$$\mass(R\trace(E\setminus F)) \leq \lim_{i \to \infty}\sum_{j=i}^\infty \mass(R\trace(E\setminus F_{j}))\leq \lim_{i \to \infty}\sum_{j=i}^\infty \frac{1}{2^j}=0$$
and this implies that $\Ha^k(E\setminus F)=0$.
Since every $x \in F$ belongs definitively to every $F_j$ (namely, for every $x \in F$ there exists $i_0(x)\in \Na$ such that $x\in F_i$ for every $i \geq i_0(x)$), we obtain \eqref{flat_approximation}.
\end{proof}

\begin{proof}[Proof of Theorem \ref{reprect}]

First observe that by the well known properties of the lower semi-continuous envelope and by Proposition \ref{semi-continuity} it trivially holds true that
\begin{equation} \label{banale}
\E_{0}(R) \leq \E(R) \quad \mbox{for every $R \in \Rc_{k}^{\G}(K)$}\,.
\end{equation}

We prove the opposite inequality. Let $R = \llbracket E, \tau \otimes g \rrbracket \in \Rc_{k}^{\G}(K)$ be a rectifiable $\G$-chain. Starting from $R$, we will construct a sequence $P_{h}$ of polyhedral $\G$-chains with the property that:
\begin{enumerate}
\item $\lim_{h \to \infty} \flat(R - P_h) = 0$;
\item $\en(P_h) \leq \E_{0}(R) + \frac{1}{h}$;
\item $\mass(P_h) \leq \mass(R) + \frac{1}{h}$.
\end{enumerate}

The due inequality will then follow in a straightforward fashion from (1) and (2). The inequality (3) is not necessary towards the proof of our result, but the possibility to produce a polyhedral flat-approximation of a rectifiable $\G$-current satisfying (2) and (3) simultaneously is an interesting byproduct of the technique.

As in Lemma \ref{lem:flat_approximation}, we adopt the notation $\pi_x$ for the affine $k$-plane $x + {\rm span}[\tau(x)]$ at any point $x \in E$ where the approximate tangent plane ${\rm Tan}(E,x)$ exists, and $S_{x,r}$ for the rectifiable $\G$-chain $\llbracket \pi_x \cap B(x,r), \tau(x) \otimes g(x) \rrbracket = g(x) \cdot \llbracket \pi_x \cap B(x,r), \tau(x), 1 \rrbracket$ for $r > 0$. Note that $\mass(S_{x,r}) = \| g(x) \| \omega_{k} r^{k}$ and $\E_{0}(S_{x,r}) = \cost(g(x)) \omega_{k} r^{k}$.

Let us also set
\[
\mu := \| g \| \Ha^{k} \trace E\,,
\]
and 
\[
\nu := \cost(g) \Ha^{k} \trace E\,.
\]
Observe that $\mu$ is a positive Radon measure in $\R^{n}$ with $\mu(\R^{n}) = \mass(R) < \infty$, and that $\nu$ is finite if and only if $\E_{0}(R) < \infty$. From now on, we will assume the validity of the latter condition, since the representation formula is evidently true if $\E_0(R)=\infty$.

Fix $\e > 0$. We make the following

{\bf{Claim}:} There exists a finite family of mutually disjoint balls $\{B_i\}_{i=1}^N$ with $B_i:=B(x_i,r_i) \subset K$ being the ball with center $x_{i} \in E$ and radius $r_{i} > 0$, such that the following properties hold:
\begin{itemize}
\item[$(i)$] $r_{i} \leq \e \qquad \forall \, i=1,\dots,N \qquad \mbox{and} \qquad \mu(\R^n \setminus (\cup_{i=1}^N  B_i))\leq \e\,;$
\item[$(ii)$] if we denote $R_{i} := R \trace B_{i}$ and $S_{i} := S_{x_{i}, r_{i}}$, then $$ \flat(R_{i} - S_{i}) \leq \e \mu(B_{i})\,; $$
\item[$(iii)$] $|\mu(B_i)- \| g(x_i) \| \omega_k r_i^k|\leq \e \mu(B_i), \qquad \forall \, i=1,\dots, N\,;$
\item[$(iv)$] if $\E_{0}(R)<\infty$, then
$$ \cost(g(x_i)) \omega_k r_i^k\leq (1+\e)\nu(B_i), \qquad \forall \, i=1,\dots, N\,.$$
\end{itemize} 

Let us assume the claim for the moment, and show how to conclude the proof of the theorem. From point $(iii)$ we deduce that
\begin{equation} \label{mass_estimate}
\mass(S_{i}) \leq (1+\e) \mass(R_i)\,,
\end{equation}
whereas point $(iv)$ implies that if $\E_{0}(R) < \infty$ then
\begin{equation} \label{cost_estimate}
\E_{0}(S_i) \leq (1+\e) \E_0(R_i)\,.
\end{equation}

Furthermore, by approximating every disc $\pi_{x_i} \cap B_{i}$ with simplexes we can conclude that there exist chains $P_{i} \in \Po_{k}^{\G}(K)$ supported on $\pi_{x_i} \cap B_{i}$ such that
\begin{equation} \label{disc_polyhedral_approx}
\flat(S_{i} - P_{i}) \leq \e \mu(B_{i}), \qquad \mass(P_{i}) \leq \mass(S_i) \quad \mbox{and} \quad \en(P_i) \leq \E_{0}(S_i)\,.
\end{equation}

Set $P := \sum_{i=1}^{N} P_{i}$. Since the balls $B_{i}$ are mutually disjoint, we have that
\begin{equation}
\en(P) = \sum_{i=1}^{N} \en(P_{i}) \leq \sum_{i=1}^{N} \E_{0}(S_i) \overset{\eqref{cost_estimate}}{\leq} (1+\e) \sum_{i=1}^{N} \E_{0}(R_i) = (1+\e) \E_{0}(R)\,, 
\end{equation}

and also that
\begin{equation}
\mass(P) = \sum_{i=1}^{N} \mass(P_{i}) \leq \sum_{i=1}^{N} \mass(S_{i}) \overset{\eqref{mass_estimate}}{\leq} (1 + \e) \sum_{i=1}^{N} \mass(R_{i}) = (1+\e) \mass(R)\,.
\end{equation}

Furthermore, we can estimate
\begin{eqnarray*}
\flat(P-R) &\leq & \sum_{i=1}^{N} \flat(P_{i} - R_{i}) + \mass\left(R \trace (\R^{n} \setminus \bigcup_{i=1}^{N} B_i)\right) \\
&\leq & \e + \sum_{i=1}^{N} \left( \flat(P_{i} - S_{i}) + \flat(S_i - R_i) \right) \\
&\overset{(ii),\eqref{disc_polyhedral_approx}}{\leq} & \e + 2\e \sum_{i=1}^{N} \mu(B_i) = \e (1+2\mass(R))\,. 
\end{eqnarray*}

This completes the proof of the theorem, provided that we show how to obtain the claim. In order to do this, let us consider the set $F$ of all points $x \in E$ such that $g(x) \neq 0$ and the following conditions are both satisfied:
\begin{itemize}
\item[$(a)$] it holds
\[
\lim_{r\to 0^+} \frac{\flat(R \trace B(x,r) - S_{x,r})}{\mass(R \trace B(x,r))} = 0 \,;
\]
\item[$(b)$] setting $\eta_{x,r}(y) := \frac{y-x}{r}$, we have that for $r \downarrow 0$ the following holds true:
\begin{align*}
& \mu_{x,r} := r^{-k} (\eta_{x,r})_{\sharp}(\mu \trace B(x,r)) \wto \| g(x) \| \Ha^{k} \trace (\pi_{x} \cap B_{1}(0))\,, \\
& \nu_{x,r} := r^{-k} (\eta_{x,r})_{\sharp}(\nu \trace B(x,r)) \wto \cost(g(x)) \Ha^{k}\trace (\pi_{x} \cap B_{1}(0))\,,
\end{align*}
\end{itemize}

where the weak\textsuperscript{*} convergence is in the sense of measures. Note that $\mu(E \setminus F) = 0$: indeed, condition $(a)$ holds true $\mu$-a.e. by Lemma \ref{lem:flat_approximation}; condition $(b)$ holds true $\mu$-a.e. by \cite[Theorem 4.8]{Camillo_book}, as both $\mu$ and $\nu$ are $k$-rectifiable Radon measures (also observe that $\mu \Lt \nu$ because of the properties of the cost function).

Now, for every $x \in F$ there exists a radius $0 < r(x) < \e$ such that
\[
\abs{\mu_{x,r}(B(0,1)) - \|g(x)\| \omega_{k}} \leq \frac{\e}{2} \|g(x)\| \omega_k \quad \mbox{for a.e. $r < r(x)$}\,,
\]

or equivalently
\begin{equation} \label{est_proj_mass}
\abs{\mu(B(x,r)) - \|g(x)\| \omega_{k} r^{k}} \leq \frac{\e}{2} \|g(x)\| \omega_{k} r^{k} \quad \mbox{for a.e. $r < r(x)$}\,.
\end{equation}

In particular, this implies that 
\begin{equation} \label{proj_mass_vs_mass}
\left( 1 - \frac{\e}{2} \right) \|g(x)\| \omega_{k} r^{k} \leq \mu(B(x,r)) \quad \mbox{for a.e. $r < r(x)$}\,,
\end{equation}

and thus, plugging \eqref{proj_mass_vs_mass} into \eqref{est_proj_mass}, we get that
\begin{equation} \label{est_mass}
\abs{\mu(B(x,r)) - \|g(x)\| \omega_{k} r^{k}} \leq \frac{\e}{2-\e} \mu(B(x,r)) \leq \e \mu(B(x,r)) \quad \mbox{for every $x \in F$, for a.e. $r < r(x)$}\,.
\end{equation}

Analogous computations show that
\begin{equation} \label{est_cost}
\abs{\nu(B(x,r)) - \cost(g(x)) \omega_{k} r^{k}} \leq \frac{\e}{2-\e} \nu(B(x,r)) \leq \e \nu(B(x,r)) \quad \mbox{for every $x \in F$, for a.e. $r < r(x)$}\,.
\end{equation}

The claim is then a simple consequence of the Vitali-Besicovitch covering theorem.
\end{proof}

\bibliographystyle{plain}
\bibliography{MMST}

\end{document}